\definecolor{forestgreen}{rgb}{0.13, 0.55, 0.13}
\definecolor{darkblue}{rgb}{0.00,0.00,0.60}
\newtheorem{theorem}{Theorem}[section]
\newtheorem{lemma}{Lemma}[section]
\newtheorem{proposition}{Proposition}[section]
\newtheorem{definition}{Definition}[section]
\newtheorem{example}{Example}[section]
\newtheorem{assumption}{Assumption}[section]
\newtheorem{remark}{Remark}[section]
\numberwithin{equation}{section}
\newcommand{\al}{\alpha}
\newcommand{\fy}{\varphi}
\renewcommand{\d}{{\rm d}}
\def\Dal{{\partial_t^\al}}
\def\Om{\Omega}
\def\II{(\Om)}
\def\Uad{\mathcal{A}}
\def\uu{\underline{u}}
\begin{document}

\title{Recovery of a Space-Time Dependent Diffusion Coefficient in Subdiffusion: Stability, Approximation and Error Analysis}

\author{Bangti Jin\thanks{Department of Computer Science, University College London, Gower Street, London, WC1E 6BT, UK.
(\texttt{b.jin@ucl.ac.uk, bangti.jin@gmail.com})}\and Zhi Zhou\thanks{Department of Applied Mathematics,
The Hong Kong Polytechnic University, Kowloon, Hong Kong (\texttt{zhizhou@polyu.edu.hk})}
}

\date{\today}

\maketitle
\begin{abstract}
{In this work, we study an inverse problem of recovering a space-time dependent diffusion coefficient in
the subdiffusion model from the distributed observation, where the mathematical model involves a
Djrbashian-Caputo fractional derivative of order $\alpha\in(0,1)$ in time. The main technical
challenges of both theoretical and numerical analysis lie in the limited smoothing properties due to
the fractional differential operator and high degree of nonlinearity of the forward map from the
unknown diffusion coefficient to the distributed observation. We establish two conditional stability results
using a novel test function, which leads to a stability bound in $L^2(0,T;L^2(\Omega))$ under a suitable
positivity condition. {The positivity condition is verified for a large class of problem data.}
Numerically, we develop a rigorous procedure for recovering the diffusion coefficient based on a
regularized least-squares formulation, which is then discretized by the standard Galerkin method with
continuous piecewise linear elements in space and backward Euler convolution quadrature in time.
We provide a complete error analysis of the fully discrete formulation, by combining several new error
estimates for the direct problem (optimal in terms of data regularity), a discrete version of fractional
maximal $L^p$ regularity, and a nonstandard energy argument. Under the positivity condition, we obtain a
standard $\ell^2(L^2\II)$ error estimate consistent with the conditional stability. Further, we
illustrate the analysis with some numerical examples.}
\vskip5pt
\noindent \textbf{Keywords:}
parameter identification, subdiffusion, space-time dependent diffusion coefficient, stability, fully discrete scheme, error estimate
\end{abstract}

\section{Introduction}\label{sec:intro}
This work is concerned with a parameter identification problem for the subdiffusion model with a space-time-dependent
diffusion coefficient and its rigorous numerical analysis. Let $\Omega\subset\mathbb{R}^d$ ($d=1,2,3$) be a convex
polyhedral domain with a boundary $\partial\Omega$. Fix $T>0$ the final time. Consider the following initial-boundary
value problem for the function $u$:
\begin{equation}\label{eqn:fde}
    \left\{\begin{aligned} \Dal  u -\nabla\cdot(q\nabla u)  &= f,&&  \mbox{in }\Omega\times(0,T],\\
    u(\cdot,0) &= u_0, && \mbox{in }\Omega,\\
u &=0,&& \mbox{on }\partial\Omega\times(0,T],
   \end{aligned} \right.
\end{equation}
where the functions $f$ and $u_0$ are the given source and initial condition, respectively, and
the diffusion coefficient $q$ is assumed to be space-time dependent.
The notation $\Dal u$ denotes the Djrbashian-Caputo fractional derivative in time $ t$ of order
$\alpha\in(0,1)$, defined by (see e.g., \cite[p. 92]{KilbasSrivastavaTrujillo:2006} and \cite[Section 2.3]{Jin:2021book})
\begin{equation*}
  \Dal u (t) = \frac{1}{\Gamma(1-\alpha)}\int_0^t (t-s)^{-\alpha}  u'(s)\ {\rm d}s,
\end{equation*}
where $\Gamma(z)=\int_0^\infty s^{z-1}e^{-s}\d s$ (for $\Re(z)>0$) denotes Euler's Gamma function.
The fractional derivative $\partial_t^\alpha u$ recovers the usual first order derivative $u'$
as the order $\alpha\to1^-$ for sufficiently smooth functions $u$. Thus the model \eqref{eqn:fde}
is a fractional analogue of the classical diffusion model. Below we use the notation $u(q)$ to
explicitly indicate the dependence of the solution $u$ on $q$.
The model \eqref{eqn:fde} has received enormous attention in recent years in physics, engineering,
biology and finance, due to their excellent capability for describing anomalously slow diffusion
processes, also known as subdiffusion, which displays local motion occasionally interrupted by
long sojourns and trapping effects. These transport processes are characterized by a sublinear
growth of the mean squared displacement of the particle with the time, as opposed to linear
growth for Brownian motion. The model \eqref{eqn:fde} has found many successful practical
applications, e.g., diffusion in fractal domains (see e.g., \cite{Nigmatulli:1986}),
transport column experiments (see e.g., \cite{HatanoHatano:1998}), and subsurface flows (see e.g., \cite{AdamsGelhar:1992});
see \cite{MetzlerKlafter:2000,MetzlerJeon:2014} for physical modeling and a long list
of applications.

This work is concerned with recovering the space-time dependent diffusion
coefficient $q^\dag(x,t)$ in the model \eqref{eqn:fde} from the (noisy) distributed observation
\begin{equation}\label{eqn:observ}
   z^\delta(x,t) = u(q^\dag)(x,t) + \xi(x,t),\quad (x,t)\in\Omega\times[0,T],
\end{equation}
where $\xi(x,t)$ denotes the pointwise additive noise, with a noise level
$\delta=\|u(q^\dag)-z^\delta\|_{L^2(0,T;L^2(\Omega))}$.
The exact diffusion coefficient $q^\dag$ is sought in the following admissible set
\begin{equation}\label{eqn:Admset}
    \Uad=\{ q\in L^\infty((0,T)\times\Omega):~~c_0\le q \le c_1, ~~\text{a.e. in}~~ \Omega\times(0,T)\},
\end{equation}
with $0<c_0<c_1<\infty$.
The inverse problem is a fractional analogue of the inverse conductivity problem for standard parabolic problems, which
has been extensively studied both numerically and theoretically (see
\cite{Isakov:2006,BanksKunisch:1989,Chavent:2009} and the references therein).

The inverse problem of recovering a space-time dependent diffusion coefficient $q(x,t)$ is formally determined for
uniqueness / identifiability. Despite its obvious practical relevance (see \cite{FaLenzi:2005,GarraOrsingher:2015}), to
the best of our knowledge, it has not been studied so far. In this work, we contribute to its mathematical and numerical analysis.
First, we establish two conditional stability results in Theorems \ref{thm:stab1} and \ref{thm:stab2}. These
estimates allow deriving the standard $L^2(0,T;L^2(\Omega))$ stability under a suitable positivity condition that can
be verified for a class of problem data.
These results are proved using a novel test function (inspired by \cite{Bonito:2017}) together with
refined regularity results for the direct problem. Second, we develop a numerical procedure for
recovering a space-time dependent diffusion coefficient, using an output least-squares formulation with a
space-time $H^1$-seminorm penalty at both continuous and discrete levels, and discuss their well-posedness.
Third, we derive a weighted $L^2(\Omega)$ error estimate for discrete approximations under a mild
regularity assumption on the exact diffusion coefficient $q^\dag(x,t)$; see Theorem \ref{thm:error-q} for the
precise statement. The analysis is inspired by the conditional stability analysis, assisted with several new
nonsmooth data error estimates in the appendix. Further, we provide several numerical
experiments to complement the theoretical analysis. Due to the nonlocality of the operator
$\partial_t^\alpha$, the solution operator has only limited smoothing properties (see
\cite{KubicaYamamoto:2020,Jin:2021book} for the solution theory) and the forward  map is highly nonlinear,
which represent the main technical challenges in the analysis. To overcome these challenges, we employ the following
powerful analytical tools for evolution problems, e.g., maximal $L^p$ regularity, nonsmooth data estimates and novel test function $\fy$.

Now we briefly review existing works. Inverse problems for anomalous diffusion
has attracted much interest, and there is a vast literature  (see, e.g., the reviews
\cite{JinRundell:2015} and \cite{LiYamamoto:2019review}). A number of works studied recovering a spatially dependent
diffusion coefficient (see e.g., \cite{ChengYamamoto:2009,LiGuJia:2012,LiYamamoto:2013,Zhang:2016,KianOksanenYamamoto:2018}).
\cite{ChengYamamoto:2009} proved the unique recovery of both diffusion coefficient and fractional order
$\alpha$ from the lateral Cauchy data for the model \eqref{eqn:fde} with a Dirac source in the one-dimensional case
using Laplace transform and Sturm-Liouville theory. See also \cite{KianOksanenYamamoto:2018} for recovering two
coefficients from the Dirichlet-to-Neumann map. \cite{Zhang:2016} proved the unique recovery of $q(t)$ from
lateral Cauchy data; see also \cite{LopushanskyiLopushanska:2014}. Nonetheless, there seems still no known stability
result for the inverse problem, and Theorems \ref{thm:stab1} and \ref{thm:stab2} are first known stability results
for the concerned inverse problem. We also refer readers to \cite{KaltenbacherRundell:2019, ZhangZhou:2017} for
the closely related inverse potential problem, and \cite{KR:2019} for recovering a nonlinear reaction term in a
fractional reaction-diffusion equation. \cite{LiGuJia:2012, LiYamamoto:2013} discussed the numerical recovery of
the diffusion coefficient $q(x)$ and fractional order $\alpha$, but the numerical discretization was not analyzed.
See also \cite{WeiLi:2018} for further numerical results on recovering the diffusion coefficient from boundary
data in the one-dimensional case, using a space-time variational formulation,
which allows only a zero initial condition. In summary,
existing works have not studied discretization schemes in a proper functional analytic setting, and this
represents one gap that this work aims to fill in. Previously \cite{JinZhou:2021sicon} analyzed the inverse
problem of recovering a spatially-dependent diffusion coefficient $q(x)$ from distributed observation, and provided a
convergence (rate) analysis of the discrete approximation; see also \cite{WangZou:2010,JinZhou:2021sinum} for the
standard parabolic case. This work substantially extends \cite{JinZhou:2021sicon} in the following
aspects: (1) we provide novel conditional stability estimates; (2) the error analysis covers the one- to three-dimensional case,
whereas that in \cite{JinZhou:2021sicon} is restricted to one- and two-dimensional cases, due to certain regularity
lifting. This restriction is overcome
by using maximal $L^p$ regularity for the direct problem and maximal $\ell^p$ regularity for the time-stepping scheme.
(3) the presence of time-dependence of the diffusion coefficient $q$
poses significant challenge in the analysis and numerics, for which we shall develop the requisite analytic tools below.
Thus the extension requires new technical developments that are still unavailable in the existing literature.

The rest of the paper is organized as follows. In Section \ref{sec:cont}, we give preliminary well-posedness results for
the direct problem, especially regularity. In Section
\ref{sec:stab} we present two conditional stability results. Then in Section \ref{sec:reg}, we describe the regularized
formulation, and its numerical discretization for the recovery of $q(x,t)$. Next, in Section \ref{sec:err}, we present
an error analysis of the fully discrete scheme. Finally, in Section \ref{sec:numer}, we present illustrative numerical
results to complement the analysis. Throughout, the notation $c$, with or without a subscript, denotes a generic
constant which may change at each occurrence, but it is always independent of the following parameters: regularization
parameter $\gamma$, mesh size $h$, time stepsize $\tau$ and noise level $\delta$. For a bivariate function $f(x,t)$,
we often write $f(t)=f(\cdot,t)$ as a vector valued function.

\section{Well-posedness of the forward problem}\label{sec:cont}

First we describe some regularity results for the direct problem \eqref{eqn:fde}. Since it involves the time-dependent
coefficient $q(x,t)$, its well-posedness analysis requires extra care \cite[Chapter 4]{KubicaYamamoto:2020}
\cite[Section 6.3]{Jin:2021book}. Below we revisit the regularity results, which is needed for the analysis
 in Sections \ref{sec:stab} and \ref{sec:reg}.

First we describe the functional analytic setting. For any $r\geq1$, we denote by $r^*\geq1$ its
conjugate exponent, i.e., $\frac1r+\frac1{r^*}=1$. For any $k\geq 0$ and $p\geq1$,
the space $W^{k,p}(\Omega)$ denotes the standard Sobolev space of the $k$th order, and we
write $H^k(\Omega)$, when $p=2$. The dual spaces of $W_0^{1,p}(\Omega)$
and $H_0^1(\Omega)$ are denoted by $(W_0^{1,p})' = W^{-1,p^*}(\Omega)$ and $(H_0^1(\Omega))'=H^{-1}(\Omega)$,
respectively. The notation $(\cdot,\cdot)$ denotes the $L^2(\Omega)$ inner product and also the duality
between $W^{1,p}_0\II$ and $W^{-1,p^*}\II$. For a UMD space $X$ (see \cite[Section 4.2.c]{Hytonen:2016} for the
definition and examples of UMD spaces, which include Sobolev spaces $W^{s,p}(\Omega)$ with $1<p<\infty$
and $s\ge 0$), we denote by $W^{s,p}(0,T;X)$ the space of vector-valued functions $v:(0,T)\rightarrow X$,
with the norm $\|\cdot\|_{W^{s,p}(0,T;X)}$ defined by complex interpolation:
\begin{align*}
\|v\|_{W^{s,p}(0,T;X)}&:= \inf_{\widetilde v}\|\widetilde v\|_{W^{s,p}({\mathbb R};X)}
:= \inf_{\widetilde v} \|  \mathcal{F}^{-1}[ (1+|\xi|^2)^{\frac{s}{2}} \mathcal{F}[\widetilde v](\xi) ]\|_{L^p(\mathbb{R};X)},
\end{align*}
where the infimum is taken over all possible functions $\widetilde v$ that extend $v$ from $(0,T)$ to ${\mathbb R}$,
and $\mathcal{F}$ denotes the Fourier transform.
For any $r\in(1,\infty)$, we define a time-dependent elliptic operator $A(t)\equiv A(t;q):
W_0^{1,r}\II \rightarrow (W_0^{1,r^*}(\Omega))' = W^{-1,r}\II$ by
\begin{equation}\label{eqn:op-A}
  (A(t) u, \phi) = (q(t) \nabla u, \nabla \phi), \quad \forall u \in W_0^{1,r}\II, \phi \in W_0^{1,r^*}\II.
\end{equation}
The dependence of $A(t)$ on $q$ will be suppressed whenever there is no confusion.
Also we denote by $A=-\Delta$, the negative Dirichlet Laplacian,
i.e., $q(x,t)\equiv1$.  Throughout, for the convex polygonal domain $\Omega\subset\mathbb{R}^d$,
we assume that there exists $r>\min(d,2)$ such that the full second-order elliptic regularity pickup in $L^p\II$ holds.

Now we can introduce the concept of a weak solution.
\begin{definition}\label{def:weak}
For $r\geq2$ and $p>\frac2\alpha$, a function $u\in L^p(0,T; W_0^{1,r}\II)\cap C([0,T];L^r\II)$
is said to be a weak solution to problem \eqref{eqn:fde} if  $\partial_t^\alpha u \in L^p(0,T;
W^{-1,r}\II)$ and it satisfies
\begin{equation}\label{eqn:weak}
(\partial_t^\alpha u(t), \phi) + (q(t)\nabla u(t),\nabla \phi) = (f(t),\phi), \quad \forall \phi \in W_0^{1,r^*}(\Omega), t\in(0,T],
\end{equation}
with the initial condition $u(0) = u_0$ in $L^r(\Omega)$.
\end{definition}

To study the well-posedness of problem \eqref{eqn:fde}, we make the following assumption.
\begin{assumption}\label{ass:data-1}
The diffusion coefficient $q$, initial data $u_0$ and source $f$ satisfy
\begin{itemize}
\item[{\rm (i)}] $q\in \Uad$, $q \in C^1([0,T];C(\overline \Omega))\cap C([0,T];C^\mu(\overline\Omega))$, with some $\mu\in(0,1)$;
\item[{\rm (ii)}] $u_0\in W_0^{1,r}(\Omega)$ and  $f\in L^p(0,T; W^{-1,r}\II)$ with some $p\in(\frac2\alpha,\infty)$ and $r \in [2, \infty)$.
\end{itemize}
\end{assumption}

Now we recall two preliminary results. The first is a perturbation estimate.
\begin{lemma}\label{lem:perturb}
If $q\in \Uad$ and $|q_t(x,t)| \le M$, then the operator $A(t)\equiv A(t;q)$ satisfies
\begin{equation*}
\| (A(t) - A(s) ) u \|_{W^{-1,r}(\Omega)} \leq c|t-s|\|\nabla u\|_{L^r(\Omega)}.
\end{equation*}
\end{lemma}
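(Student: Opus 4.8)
The plan is to prove the estimate directly from the definition of the $W^{-1,r}(\Omega)$ norm as a dual norm, thereby reducing the whole claim to a pointwise-in-time Lipschitz bound on the coefficient $q$. I would fix $t,s\in[0,T]$ and $u\in W_0^{1,r}(\Omega)$, and start from
\[
\|(A(t)-A(s))u\|_{W^{-1,r}(\Omega)}=\sup_{\phi}\,|((A(t)-A(s))u,\phi)|,
\]
where the supremum is taken over $\phi\in W_0^{1,r^*}(\Omega)$ with $\|\phi\|_{W_0^{1,r^*}(\Omega)}\le1$, and $(\cdot,\cdot)$ is read as the duality pairing between $W_0^{1,r^*}(\Omega)$ and $W^{-1,r}(\Omega)$ fixed earlier. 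By the defining relation \eqref{eqn:op-A}, the pairing collapses to $((A(t)-A(s))u,\phi)=((q(t)-q(s))\nabla u,\nabla\phi)$.

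Next I would estimate this pairing by H\"older's inequality with conjugate exponents $r$ and $r^*$, which gives
\[
|((q(t)-q(s))\nabla u,\nabla\phi)|\le\|q(t)-q(s)\|_{L^\infty(\Omega)}\,\|\nabla u\|_{L^r(\Omega)}\,\|\nabla\phi\|_{L^{r^*}(\Omega)}.
\]
Since $\|\nabla\phi\|_{L^{r^*}(\Omega)}\le\|\phi\|_{W_0^{1,r^*}(\Omega)}\le1$, the factor carrying $\phi$ drops out upon taking the supremum, leaving only the product $\|q(t)-q(s)\|_{L^\infty(\Omega)}\|\nabla u\|_{L^r(\Omega)}$.

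The remaining ingredient is a Lipschitz-in-time bound on the coefficient. Under the regularity $q\in C^1([0,T];C(\overline\Omega))$ supplied by Assumption \ref{ass:data-1}(i), the derivative $q_t$ is well defined and I may write, for each $x$, $q(x,t)-q(x,s)=\int_s^t q_t(x,\sigma)\,\d\sigma$, so that the hypothesis $|q_t(x,t)|\le M$ yields $|q(x,t)-q(x,s)|\le M|t-s|$; taking the supremum over $x\in\overline\Omega$ gives $\|q(t)-q(s)\|_{L^\infty(\Omega)}\le M|t-s|$. Combining the three steps establishes the claim with $c=M$, up to the norm-equivalence constant between $\|\nabla\,\cdot\,\|_{L^{r^*}(\Omega)}$ and $\|\cdot\|_{W_0^{1,r^*}(\Omega)}$ absorbed into $c$.

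I do not anticipate a genuine obstacle here: the estimate follows from duality, H\"older's inequality, and the fundamental theorem of calculus. The only points demanding minor care are the bookkeeping of the dual-norm convention and confirming that $q_t$ is meaningful, which is exactly what the $C^1$-in-time regularity in Assumption \ref{ass:data-1} guarantees.
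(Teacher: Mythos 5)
Your proposal is correct and follows essentially the same route as the paper's proof: expressing the $W^{-1,r}(\Omega)$ norm as a dual norm, collapsing the pairing via the definition \eqref{eqn:op-A}, and applying H\"older's inequality together with the Lipschitz-in-time bound $|q(x,t)-q(x,s)|\le M|t-s|$ coming from $|q_t|\le M$. The paper merely compresses the H\"older and fundamental-theorem-of-calculus steps into a single inequality, which you have spelled out explicitly.
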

\begin{proof}
It follows directly from the definition and the condition $|\partial_tq|\leq M$ that
\begin{align*}
& \| (A(t) - A(s) ) u \|_{W^{-1,r}\II}  = \sup_{v\in W_0^{1,r^*}\II} \frac{( (A(t) - A(s) ) u, v)}{\|  \nabla v \|_{L^{r^*}\II}}\\
=&  \sup_{v\in W_0^{1,r^*}\II} \frac{( (q(t) - q(s) )\nabla  u, \nabla v)}{\|  \nabla v \|_{L^{r^*}\II}} \le c|t-s| \| \nabla u  \|_{L^r\II}
\end{align*}
This shows the desired estimate.
\end{proof}

The second is the maximal $L^p$ regularity for the model \eqref{eqn:fde} with
a stationary diffusion coefficient.
\begin{lemma}\label{lem:max-weak}
If $q$ is independent of $t$ and $q\in C^{\mu}(\overline\Omega)\cap \mathcal{A}$ with
$\mu\in(0,1)$, then for $u_0=0$ and $f\in L^p(0,T; W^{-1,r}\II)$ with $r\ge2$ and
$p>\frac2\alpha$, problem \eqref{eqn:fde} admits a unique weak solution $u$ and
\begin{equation*}
\|  \partial_t^\alpha u \|_{L^p(0,T; W^{-1,r}(\Omega))} + \|  \nabla u  \|_{L^p(0,T; L^r(\Omega))} \le c  \|  f  \|_{L^p(0,T; W^{-1,r}(\Omega))}.
\end{equation*}
\end{lemma}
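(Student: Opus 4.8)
The plan is to recast \eqref{eqn:fde} (with $u_0=0$) as an abstract fractional evolution equation
\begin{equation*}
  \partial_t^\al u + A u = f, \quad t\in(0,T], \qquad u(0)=0,
\end{equation*}
posed on the ground space $X = W^{-1,r}\II$, where $A=A(q)$ is the operator from \eqref{eqn:op-A}, viewed as a closed, densely defined operator on $X$ with domain $D(A)=W_0^{1,r}\II$ (density follows from the dense embeddings $W_0^{1,r}\II \hookrightarrow L^r\II \hookrightarrow W^{-1,r}\II$). Since $1<r<\infty$, $X$ is a UMD space, so one may appeal to the operator-theoretic maximal $L^p$ regularity theory for fractional evolution equations. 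The whole argument then reduces to verifying one structural property of $A$: that it is $\mathcal{R}$-sectorial on $X$ with $\mathcal{R}$-angle strictly below $\pi/2$.

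First I would establish plain sectoriality of $A$ on $X$, i.e.\ the resolvent bound $\|\lambda(\lambda+A)^{-1}\|_{\mathcal{L}(X)}\le c$ uniformly for $\lambda$ in a sector $\{|\arg\lambda|<\pi-\phi\}$ with some $\phi<\pi/2$. This follows from standard resolvent estimates for the divergence-form elliptic operator: the uniform ellipticity $c_0\le q\le c_1$ yields the $L^2$-resolvent bound by a form argument, and the assumed full $W^{1,r}$ elliptic regularity pickup (valid for $r>\min(d,2)$ on $\Omega$) transfers it to the $W^{-1,r}\II$ scale, where the Hölder continuity $q\in C^\mu(\overline\Omega)$ is exactly what makes this pickup available. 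The decisive step is then to promote sectoriality to $\mathcal{R}$-sectoriality, equivalently to a bounded $H^\infty(\Sigma_\theta)$-calculus with $\theta<\pi/2$. On the UMD space $X$ this can be obtained by a perturbation/localisation argument from the constant-coefficient operator $-\Delta$ — for which a bounded $H^\infty$-calculus on $W^{-1,r}\II$ is classical — freezing $q$ at a point and controlling the lower-order remainder through the Hölder modulus of $q$; a bounded $H^\infty$-calculus on a UMD space then delivers $\mathcal{R}$-sectoriality with the same angle.

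With $\mathcal{R}$-sectoriality in hand, I would invoke the abstract maximal $L^p$ regularity theorem for $\partial_t^\al u + Au=f$ on UMD spaces (in the spirit of the works of Bajlekova, Zacher and Pr\"uss), whose hypothesis is precisely that $A$ be $\mathcal{R}$-sectorial with angle $\phi_A^{\mathcal R}<\pi(1-\tfrac\al2)$. Since $\phi_A^{\mathcal R}<\pi/2<\pi(1-\tfrac\al2)$ for every $\al\in(0,1)$, the hypothesis holds for all $p\in(1,\infty)$, giving a unique $u$ with $\partial_t^\al u, Au\in L^p(0,T;X)$ and $\|\partial_t^\al u\|_{L^p(0,T;X)}+\|Au\|_{L^p(0,T;X)}\le c\|f\|_{L^p(0,T;X)}$. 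It then remains to convert the operator norm into the stated gradient norm: from $\|Au\|_{W^{-1,r}\II}\le c\|\nabla u\|_{L^r\II}$ and the reverse elliptic estimate $\|\nabla u\|_{L^r\II}\le c\|Au\|_{W^{-1,r}\II}$ (again the $W^{1,r}$ pickup) one has $\|\nabla u\|_{L^r\II}\simeq\|Au\|_{W^{-1,r}\II}$, which yields the claim. Finally, the restriction $p>\tfrac2\al$ is used only to place $u$ in the weak-solution class of Definition \ref{def:weak}: the mixed-derivative theorem puts $u$ in $H^{\al/2,p}(0,T;L^r\II)$ (using $[W^{-1,r}\II,W_0^{1,r}\II]_{1/2}=L^r\II$), and the temporal Sobolev embedding requires $\al/2>1/p$, i.e.\ $p>\tfrac2\al$, which then gives $u\in C([0,T];L^r\II)$ with a well-defined trace $u(0)=0$.

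The main obstacle is the second step, namely securing $\mathcal{R}$-sectoriality (or a bounded $H^\infty$-calculus) on the negative-order space $W^{-1,r}\II$ under only Hölder regularity of $q$: the perturbation of the principal part must be handled with care, since $q$ is not differentiable and the ground space is a dual Sobolev space rather than $L^r\II$. Everything else is either a standard elliptic estimate or a direct application of the abstract theorem.
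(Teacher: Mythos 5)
Your proposal is correct and follows essentially the same route as the paper: the paper likewise reduces the claim to the $\mathcal{R}$-sectoriality of $A$ on $W^{-1,r}\II$ (citing \cite[Lemma 8.5]{AkrivisLiLubich:2017}, which is exactly the perturbation-from-$-\Delta$ argument via the H\"older modulus of $q$ that you sketch as the ``decisive step'') and then invokes the abstract maximal $L^p$ regularity theorem for the fractional evolution equation, using $p>\frac2\alpha$ only for the interpolation/Sobolev embedding that yields $u\in C([0,T];L^r\II)$. The only cosmetic difference is that the paper treats $r=2$ separately by an energy argument before passing to $r>2$, whereas you handle all $r\ge 2$ uniformly through the operator-theoretic framework.
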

\begin{proof}
For $r=2$ and $p=2$, the estimate can be found in \cite[Exercise 6.5]{Jin:2021book},
and the case $p\in (1,\infty)$ follows similarly. Thus $u\in W^{\alpha,p}(0,T;H^{-1}\II)\cap L^p(0,T;
H_0^1\II)$, and since $p>\frac2\alpha$, the interpolation between $W^{\alpha,p}(0,T;H^{-1}\II)$
and $L^p(0,T; H_0^1\II)$ \cite[Theorem 5.2]{Amann:2000} and Sobolev embedding theorem \cite{AdamsFournier:2003} imply $u\in
C([0,T]; L^2\II)$. For $r>2$, the condition $q\in C^{\mu}(\overline\Omega)$ implies that the
operator $A$ is $R$-sectorial on $W^{-1,r}\II$ with an angle $\frac\pi2$ \cite[Lemma 8.5]{AkrivisLiLubich:2017}.
Then the maximal $L^p$ regularity follows as \cite[Theorem 6.11]{Jin:2021book}.
\end{proof}

Now we can state the existence and uniqueness of a weak solution to problem \eqref{eqn:fde} in
the sense of Definition \ref{def:weak}. See the appendix for the proof.
\begin{theorem}\label{thm:weak-sol}
Let Assumption \ref{ass:data-1} be fulfilled. Then
problem \eqref{eqn:fde} admits a unique weak solution in the sense of Definition \ref{def:weak}.
Further, if $r>d$ and $p>\frac{2r}{\alpha(r-d)}$, then $u\in L^\infty((0,T)\times \Omega)$.
\end{theorem}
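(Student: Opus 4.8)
The plan is to reduce to zero initial data, then upgrade the stationary maximal $L^p$ regularity of Lemma \ref{lem:max-weak} to the time-dependent coefficient by a freezing-of-coefficients argument, and finally to extract the $L^\infty$ bound from a trace/interpolation embedding of the maximal regularity space.

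First I would remove the initial condition. Writing $u = v + u_0$ with $u_0\in W_0^{1,r}\II$ regarded as a time-independent function and noting that $\Dal u_0 = 0$ (the Djrbashian--Caputo derivative annihilates functions constant in time), the part $v$ solves \eqref{eqn:fde} with zero initial data and modified source $\tilde f := f - A(\cdot)u_0$. Since $q\in\Uad$ and $u_0\in W_0^{1,r}\II$, one has $\|A(t)u_0\|_{W^{-1,r}\II}\le c\|\nabla u_0\|_{L^r\II}$ uniformly in $t$, so $\tilde f\in L^\infty(0,T;W^{-1,r}\II)\subset L^p(0,T;W^{-1,r}\II)$. Thus it suffices to treat the problem with $u(0)=0$ and right-hand side in $L^p(0,T;W^{-1,r}\II)$.

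Next, for existence and uniqueness I would take the frozen-coefficient operator as principal part. Fixing a reference time $t_*$, I rewrite the equation as $\Dal v + A(t_*)v = \tilde f - (A(t)-A(t_*))v$, so the left-hand side enjoys the maximal $L^p$ regularity of Lemma \ref{lem:max-weak}. By Lemma \ref{lem:perturb} the perturbation obeys $\|(A(\cdot)-A(t_*))v\|_{L^p(I;W^{-1,r}\II)}\le c\,|I|\,\|\nabla v\|_{L^p(I;L^r\II)}$ on any subinterval $I\ni t_*$. Choosing $|I|=\eta$ small enough that $c\eta<1$, the solution map $v\mapsto$ (solution of the frozen problem with source $\tilde f-(A(\cdot)-A(t_*))v$) is a contraction on the maximal regularity space over $I$, giving a unique local solution; the fractional memory from the preceding subintervals enters only as an additional known source in $L^p(I;W^{-1,r}\II)$. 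Covering $[0,T]$ by finitely many such subintervals and summing the estimates yields a unique solution with
\[
\|\Dal u\|_{L^p(0,T;W^{-1,r}\II)} + \|\nabla u\|_{L^p(0,T;L^r\II)} \le c\big(\|f\|_{L^p(0,T;W^{-1,r}\II)} + \|u_0\|_{W^{1,r}\II}\big),
\]
in particular $u\in W^{\alpha,p}(0,T;W^{-1,r}\II)\cap L^p(0,T;W_0^{1,r}\II)$; the required continuity $u\in C([0,T];L^r\II)$ then follows by interpolation \cite[Theorem 5.2]{Amann:2000} exactly as in the proof of Lemma \ref{lem:max-weak}, so $u$ is a weak solution in the sense of Definition \ref{def:weak}.

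Finally, for the $L^\infty$ bound I would sharpen this interpolation. Since $\Dal v\in L^p(0,T;W^{-1,r}\II)$, $v\in L^p(0,T;W_0^{1,r}\II)$ and $v(0)=0$, \cite[Theorem 5.2]{Amann:2000} yields
\[
W^{\alpha,p}(0,T;W^{-1,r}\II)\cap L^p(0,T;W_0^{1,r}\II)\hookrightarrow C\big([0,T];(W^{-1,r}\II,W^{1,r}\II)_{1-\frac1{\alpha p},p}\big).
\]
The interpolation space on the right is (up to equivalent norms) the Besov space $B^{s}_{r,p}\II$ with $s = -1+2(1-\tfrac1{\alpha p}) = 1-\tfrac{2}{\alpha p}$. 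Under $r>d$ and $p>\frac{2r}{\alpha(r-d)}$ one computes $s>\frac dr$, so the Sobolev embedding $B^{s}_{r,p}\II\hookrightarrow C(\overline\Omega)\hookrightarrow L^\infty\II$ applies; since also $u_0\in W^{1,r}\II\hookrightarrow L^\infty\II$, this gives $u\in C([0,T];L^\infty\II)\subset L^\infty((0,T)\times\Omega)$. I expect the main obstacle to be the freezing step: because $\Dal$ is nonlocal in time, restarting on each subinterval carries a memory term from the earlier intervals, and one must verify that this term is a genuine $L^p(I;W^{-1,r}\II)$ source and that the smallness constant $c\eta<1$ holds uniformly along the partition, so that the full maximal $L^p$ regularity is preserved rather than degraded by a power of the time step; Lemma \ref{lem:perturb} is precisely what renders the perturbation subordinate to the frozen principal part.
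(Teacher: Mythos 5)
Your reduction to zero initial data via $w=u-u_0$ and your derivation of the $L^\infty$ bound from the embedding of $W^{\alpha,p}(0,T;W^{-1,r}\II)\cap L^p(0,T;W_0^{1,r}\II)$ (your trace-space computation $s=1-\frac{2}{\alpha p}>\frac dr$ is exactly the condition $p>\frac{2r}{\alpha(r-d)}$, and is equivalent to the intermediate interpolation $u\in W^{\alpha\theta,p}(0,T;W^{1-2\theta,r}\II)$ used in the paper) both match the paper's proof. The core existence step, however, does not, and the obstacle you yourself flag is in my view fatal as stated. When you restrict the equation to a subinterval $I=(a,a+\eta]$ with $a>0$ and split $\Dal v$ into a local Caputo derivative based at $a$ plus the history $H(t)=\frac{1}{\Gamma(1-\alpha)}\int_0^a(t-s)^{-\alpha}v'(s)\,\d s$, the history is \emph{not} a known $L^p(I;W^{-1,r}\II)$ source. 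The available regularity from the preceding subintervals is $v\in W^{\alpha,p}\hookrightarrow C^{\alpha-1/p}$ in time with values in $W^{-1,r}\II$, and integrating $H$ by parts gives $H(t)=\frac{1}{\Gamma(1-\alpha)}\big[t^{-\alpha}v(a)+\alpha\int_0^a(t-s)^{-\alpha-1}(v(a)-v(s))\,\d s\big]$, whose second term is only bounded by $c(t-a)^{-1/p}$ (and by $c(t-a)^{-\alpha}$ if one uses mere boundedness of $v$); neither power is $p$-integrable near $t=a$. So the contraction cannot be run on the maximal-regularity space over $I$ without substantial extra work (weighted-in-time spaces, or first proving additional Hölder regularity of $\Dal v$), and your proof is incomplete at precisely this point.

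The paper circumvents the memory issue entirely by never splitting the time interval: it uses the method of continuity in a homotopy parameter $\theta$ for the family $\Dal w+A(\theta t)w=f-A(t)u_0$. The decisive a priori estimate freezes the coefficient at the \emph{right endpoint} $t_0$ of the whole interval $[0,t_0]$, so that the perturbation $(A(\theta t_0)-A(\theta t))w(t)$ carries the weight $|t_0-t|$, which vanishes exactly where the new contribution to $g(t_0)=\|\nabla w\|_{L^p(0,t_0;L^r\II)}^p$ accumulates; an integration by parts and Gronwall's inequality in $t_0$ then close the estimate globally, and openness of the solvable parameter set follows from a Neumann-series perturbation. I would suggest either adopting that global argument, or, if you wish to keep the localization strategy, supplying a genuine proof that the memory term lies in the correct space (this is the technical heart of Zacher-type localization arguments for Volterra equations and cannot be waved through).
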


Next, we derive several improved regularity estimates.
\begin{assumption}\label{ass:data-2}
The diffusion coefficient $q$, initial data $u_0$ and source $f$ satisfy the following assumptions.
\begin{itemize}
\item[{\rm(i)}] $q^\dag \in \Uad$ and the following condition holds
\begin{align}
 |\partial_t q(x,t)|+|\nabla_x q(x,t)|+|\nabla_x\partial_t q(x,t)|\le M, \quad \forall (x,t)\in \Omega\times(0,T]. \label{Cond-q}
\end{align}
\item[{\rm(ii)}] $u_0\in W^{2,r}(\Omega)\cap H_0^1(\Omega)$, with some $r>\max(2,d)$, and $f\in L^\infty((0,T)\times\Omega)\cap C^1([0,T];L^2(\Omega))$.
\end{itemize}
\end{assumption}

Under Assumption \ref{ass:data-2}, the operator $A(t) := A(t;q)$ satisfies that
for $\beta\in[0,1]$ and $t,s\in[0,T]$ \cite[Lemma 6.5]{Jin:2021book}
\begin{equation}\label{eqn:perturb}
 \| A(t)^{\beta}(I-A(t)^{-1}A(s)) \phi  \|_{L^2\II} \le c |t-s| \| A^{\beta} \phi \|_{L^2\II},\quad \forall \phi\in D(A^{\beta}).
\end{equation}

The next result gives an improved regularity estimate.
\begin{proposition}\label{prop:reg-1}
Under Assumption \ref{ass:data-2}, problem \eqref{eqn:fde} has a unique solution $u\in L^p(0,T; W^{2,r}(\Omega))
\cap W^{\alpha,p}(0,T;L^r\II)$ for any $p\in(\frac2\alpha,\infty)$.
\end{proposition}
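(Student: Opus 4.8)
The plan is to upgrade the weak solution of Theorem~\ref{thm:weak-sol} from the energy/$W^{-1,r}\II$ level to the $W^{2,r}\II$ level by a coefficient-freezing plus perturbation argument, driven by the maximal $L^p$ regularity of the stationary problem. Since uniqueness is already furnished by Theorem~\ref{thm:weak-sol}, it suffices to produce a solution with the claimed regularity. First I would remove the initial data: set $w=u-u_0$, so that, because $\Dal$ annihilates the time-independent function $u_0$, problem \eqref{eqn:fde} becomes $\Dal w + A(t)w = \tilde f$ with $w(0)=0$, where $\tilde f := f - A(t)u_0 = f+\nabla\cdot(q\nabla u_0)$. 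Under Assumption~\ref{ass:data-2} one has $u_0\in W^{2,r}\II$, $|\nabla_x q|\le M$ and $q\in\Uad$, hence $\nabla\cdot(q\nabla u_0)=\nabla q\cdot\nabla u_0 + q\,\Delta u_0\in L^r\II$ uniformly in $t$; together with $f\in L^\infty((0,T)\times\Om)$ this gives $\tilde f\in L^\infty(0,T;L^r\II)\subset L^p(0,T;L^r\II)$ for every finite $p$. Thus the whole task reduces to proving $L^p(0,T;L^r\II)$ maximal regularity for $w$ with zero initial condition.

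The second step is the frozen-coefficient estimate. For a fixed $s\in[0,T]$ the coefficient $q(s)\in C^\mu(\overline\Om)\cap\Uad$ is Hölder continuous and uniformly elliptic, so the stationary operator $A(s)$, with domain $D(A(s))=W^{2,r}\II\cap W_0^{1,r}\II$, is $R$-sectorial on $L^r\II$ (with angle $\tfrac\pi2$, as in the proof of Lemma~\ref{lem:max-weak}, now at the $L^r\II$ rather than the $W^{-1,r}\II$ level), which suffices for fractional maximal regularity since $\alpha<1$. This yields, for any $g\in L^p(0,T;L^r\II)$, a unique solution $v$ of $\Dal v + A(s)v = g$, $v(0)=0$, with $\|\Dal v\|_{L^p(0,T;L^r\II)}+\|A(s)v\|_{L^p(0,T;L^r\II)}\le c\|g\|_{L^p(0,T;L^r\II)}$; invoking the assumed full second-order elliptic regularity pickup on the convex polyhedron $\Om$ converts this into $\|v\|_{L^p(0,T;W^{2,r}\II)}\le c\|g\|_{L^p(0,T;L^r\II)}$, with $c$ uniform in $s$. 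Denote by $\mathcal S$ the corresponding solution operator for $s=0$.

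Third, I would rewrite the equation for $w$ in perturbed form, $\Dal w + A(0)w = \tilde f - (A(t)-A(0))w$, and set up the fixed-point map $\Phi(w):=\mathcal S\big(\tilde f - (A(\cdot)-A(0))w\big)$. The decisive ingredient is an $L^r\II$ analogue of Lemma~\ref{lem:perturb} and \eqref{eqn:perturb}: writing $(A(t)-A(0))w = \nabla(q(0)-q(t))\cdot\nabla w + (q(0)-q(t))\,\Delta w$ and using the bounds on $|\partial_t q|$ and $|\nabla_x\partial_t q|$ in \eqref{Cond-q}, one gets $\|(A(t)-A(0))w(t)\|_{L^r\II}\le c\,t\,\|w(t)\|_{W^{2,r}\II}$. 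Combined with step two, $\Phi$ is a contraction on $L^p(0,T_0;W^{2,r}\II)$ with factor $\le cT_0$, so for $T_0$ small it has a unique fixed point $w$, a solution on $[0,T_0]$ lying in $L^p(0,T_0;W^{2,r}\II)\cap W^{\alpha,p}(0,T_0;L^r\II)$ (the $W^{\alpha,p}$ membership following from $\Dal w\in L^p(0,T_0;L^r\II)$ and $w(0)=0$ via mapping properties of the Abel operator). The uniform smallness of $T_0$ then lets me iterate over consecutive subintervals to cover $[0,T]$ in finitely many steps.

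The main obstacle is twofold and resides in this last step. First, one must close the perturbation estimate at the top ($W^{2,r}\II$) order: unlike in the weak formulation, the term $(q(0)-q(t))\Delta w$ carries two full spatial derivatives of $w$, so controlling it requires precisely the $W^{2,r}\II$ bound one is trying to establish, and the gain that makes the scheme work is the factor $t$ (equivalently $|t-s|$) coming from the time-Lipschitz bounds on $q$ and $\nabla_x q$. Second, because $\Dal$ is nonlocal in time, the passage from $[0,T_0]$ to $[T_0,2T_0]$ is not a fresh initial-value problem: the memory $\tfrac1{\Gamma(1-\al)}\int_0^{T_0}(t-s)^{-\al}w'(s)\,\d s$ enters as extra forcing, and the value $w(T_0)\in W^{2,r}\II$ must be subtracted off as in step one. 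One checks that this memory is smooth for $t>T_0$ (its kernel singularity is avoided) and inherits the regularity already obtained, so each continuation step has data in $L^p(L^r\II)$ of uniformly bounded norm; hence finitely many contractions assemble into a global solution with the asserted regularity. Alternatively, globalization can be phrased as convergence on all of $[0,T]$ of the Neumann series $\sum_k(-\mathcal S K)^k\mathcal S\tilde f$, with $K:=A(\cdot)-A(0)$, whose iterated kernels gain factorial decay from the weakly singular fractional kernel.
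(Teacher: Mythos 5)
Your three main ingredients --- the shift $w=u-u_0$ with $f-A(t)u_0\in L^p(0,T;L^r(\Omega))$, the frozen-coefficient maximal $L^p$ regularity on $L^r(\Omega)$ (upgraded to $W^{2,r}(\Omega)$ by the assumed elliptic regularity pickup), and the perturbation bound $\|(A(t)-A(s))v\|_{L^r(\Omega)}\le c|t-s|\|v\|_{W^{2,r}(\Omega)}$ coming from \eqref{Cond-q} --- are precisely what the paper's citation of ``the argument in \cite[Theorem 6.14]{Jin:2021book}'' unpacks to; it is the $L^r(\Omega)$-level analogue of the appendix proof of Theorem \ref{thm:weak-sol}. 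The gap is in how you pass from a local to a global solution. Freezing at $s=0$ gives a contraction only on $[0,T_0]$ with $T_0\sim 1/c$, and neither of your globalization devices closes. (i) Restarting on $[T_0,2T_0]$: the memory is not admissible forcing. You only know $w\in W^{\alpha,p}(0,T_0;L^r(\Omega))$, so write $\partial_t^\alpha w$ in Riemann--Liouville form (legitimate since $w(0)=0$); its tail for $t>T_0$ is a multiple of $\frac{\d}{\d t}\int_0^{T_0}(t-s)^{-\alpha}w(s)\,\d s$, whose leading behaviour as $t\downarrow T_0$ is $-\Gamma(1-\alpha)^{-1}(t-T_0)^{-\alpha}w(T_0)$ plus milder terms. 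Because $p>\frac2\alpha$ forces $\alpha p>2$, the function $(t-T_0)^{-\alpha}$ is not in $L^p(T_0,2T_0)$, so the restarted problem does not have $L^p(L^r)$ data and the frozen-coefficient estimate cannot be reapplied; subtracting $w(T_0)$ cures the elliptic term $A(t)w(T_0)$ but not this singularity of the nonlocal derivative. (ii) The Neumann series does not gain factorial decay at the top order: $\mathcal{S}K$ is bounded on $L^p(0,t;W^{2,r}(\Omega))$ with norm $\le ct$, but since $\|A(0)E_0(t)\|_{L^r(\Omega)\to L^r(\Omega)}\sim t^{-1}$ (with $E_0$ the frozen-coefficient source solution operator as in \eqref{eqn:sol-op-rep}) is not integrable, there is no integrable Volterra kernel to iterate at the $W^{2,r}(\Omega)$ level; one only obtains $\|(\mathcal{S}K)^k\mathcal{S}\tilde f\|_{L^p(0,T;W^{2,r}(\Omega))}\le (cT)^k\|\mathcal{S}\tilde f\|_{L^p(0,T;W^{2,r}(\Omega))}$, which converges only for $T<1/c$.

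The standard repair, and the one the paper relies on, is to freeze the coefficient at the \emph{right} endpoint: for each $t_0\in(0,T]$ write $\partial_t^\alpha w+A(t_0)w=\tilde f+(A(t_0)-A(t))w$ on $(0,t_0]$, so that the perturbation carries the weight $(t_0-t)$, which vanishes at $t=t_0$. Maximal regularity then yields, with $g(t):=\|w\|_{L^p(0,t;W^{2,r}(\Omega))}^p$,
\begin{equation*}
g(t_0)\le c\|\tilde f\|_{L^p(0,T;L^r(\Omega))}^p+c\int_0^{t_0}(t_0-t)^p g'(t)\,\d t
= c\|\tilde f\|_{L^p(0,T;L^r(\Omega))}^p+cp\int_0^{t_0}(t_0-t)^{p-1}g(t)\,\d t,
\end{equation*}
and Gronwall's inequality now applies on all of $[0,T]$ with no smallness restriction. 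This a priori bound, being uniform in the homotopy parameter, combines with the method of continuity (the open--closed argument in $\theta$ for $A(\theta t)$, exactly as in Appendix \ref{appA}) to give global existence in $L^p(0,T;W^{2,r}(\Omega))$ with $\partial_t^\alpha w\in L^p(0,T;L^r(\Omega))$. Your remaining steps --- uniqueness inherited from Theorem \ref{thm:weak-sol}, and $w\in W^{\alpha,p}(0,T;L^r(\Omega))$ deduced from $\partial_t^\alpha w\in L^p(0,T;L^r(\Omega))$ and $w(0)=0$ --- are correct and match the paper.
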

\begin{proof}
By Theorem  \ref{thm:weak-sol}, it suffices to show the regularity.
Let $w=u-u_0$, which satisfies
\begin{equation*}
    \Dal  w -A(t) w  = f + A(t)u_0 ,\quad \forall t\in (0,T],\quad \mbox{with }w(0)=0.
\end{equation*}
Since $q\in \Uad$ and satisfies condition \eqref{Cond-q}, and $u_0 \in  W^{2,r}\II \cap H_0^1\II$,
$f+A(t)u_0$ belongs to $L^p(0,T;L^r(\Omega))$. The standard maximal $L^p$ regularity and
the argument in \cite[Theorem 6.14]{Jin:2021book} imply
$$
w \in L^p(0,T; W^{2,r}(\Omega))\quad\mbox{and}\quad\partial_t^\alpha w \in L^p(0,T; L^r\II).
$$
This and $w(0)=0$ imply $w \in W^{\alpha,p}(0,T;L^r\II)$ \cite[Lemma 2.4]{JKZ:2021}. So $u = w+u_0
\in  L^p(0,T; W^{2,r}(\Omega)) \cap W^{\alpha,p}(0,T;L^r\II)$.
\end{proof}\vskip5pt

{By Proposition \ref{prop:reg-1}, interpolation theorem \cite[Theorem 5.2]{Amann:2000}
and Sobolev embedding theorem, we deduce that for any $\theta<\frac12-\frac{d}{2r}$ and $p>
\frac1{\alpha\theta}$, there holds
\begin{equation}\label{reg-fde-inf}
  u \in W^{\alpha\theta,p}(0,T; W^{2(1-\theta),r}(\Omega)) \hookrightarrow C([0,T]; W^{1,\infty}(\Omega)).
\end{equation}
Further, by \cite[Theorems 6.15 and 6.16]{Jin:2021book} and the full elliptic regularity pickup, there holds
\begin{equation}\label{reg-fde-2}
 \|u(t)\|_{H^2(\Omega)} + \| \partial_t^\alpha u(t) \|_{L^2(\Omega)} + t^{1-\alpha}\| \partial_{t}u(t) \|_{L^2(\Omega)}
  + t \| \partial_{t}u(t) \|_{H^2(\Omega)} \le c,\quad \forall t\in (0,T].
\end{equation}}

The next result gives a weighted bound on $u'(t)$. This estimate will play a role in the conditional stability
analysis in Section \ref{sec:stab} and the error analysis in Section \ref{sec:err}.
\begin{proposition}\label{prop:reg-2}
Let Assumption \ref{ass:data-1} with $r=2$ and \eqref{Cond-q} hold. Then for
$f\in C([0,T]; H^{-1}(\Omega))$ with $\int_0^t  (t-s)^{\frac\alpha2-1}\| f'(s) \|_{H^{-1}(\Omega)}\,\d s< c$,
there holds for any small $\epsilon>0$
$$ \Big\| \int_s^t(\xi-s)^{-\alpha} u'(\xi) \,\d \xi \Big\|_{L^2\II} \le c_\epsilon s^{-\frac\alpha2-\epsilon}.$$
\end{proposition}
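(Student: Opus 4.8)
The plan is to reduce the claim to a pointwise weighted bound on $u'$ and then evaluate an explicit scalar integral. First I would apply the triangle inequality for Bochner integrals (Minkowski's integral inequality) to pull the $L^2(\Omega)$-norm inside:
\[
\Big\|\int_s^t(\xi-s)^{-\alpha}u'(\xi)\,\mathrm{d}\xi\Big\|_{L^2(\Omega)}\le\int_s^t(\xi-s)^{-\alpha}\|u'(\xi)\|_{L^2(\Omega)}\,\mathrm{d}\xi .
\]
This moves all the difficulty onto a weighted estimate for $\|u'(\xi)\|_{L^2(\Omega)}$ near $\xi=0$, which is the real content of the proposition.

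The key step is therefore to prove that, under Assumption \ref{ass:data-1} with $r=2$, condition \eqref{Cond-q}, and the stated hypotheses on $f$, one has $\|u'(\xi)\|_{L^2(\Omega)}\le c_\epsilon\,\xi^{\alpha/2-1}$ for $\xi\in(0,T]$ (with an arbitrarily small loss $\xi^{\alpha/2-1-\epsilon}$ permitted, in case the time-dependent analysis turns out to be borderline). Since only $u_0\in H^1_0(\Omega)=\dot H^1(\Omega)$ is available here, the correct singularity is $\xi^{\alpha/2-1}$ rather than the milder $\xi^{\alpha-1}$ of \eqref{reg-fde-2}, which requires the stronger Assumption \ref{ass:data-2}. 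To obtain it I would split the contributions of the initial data and the source and handle the time-dependence by a frozen-coefficient/perturbation argument. For a coefficient frozen at a fixed time the associated solution operators are explicit, and standard smoothing estimates give the weighted bound $c\,\xi^{\alpha/2-1}\|u_0\|_{\dot H^1}$ for the homogeneous (initial-data) part, while the source part is controlled precisely by the hypothesis $\int_0^\xi(\xi-\sigma)^{\alpha/2-1}\|f'(\sigma)\|_{H^{-1}}\,\mathrm{d}\sigma<c$ (after writing $f(\sigma)=f(0)+\int_0^\sigma f'$ and differentiating the Duhamel representation to transfer a derivative onto $f$). The passage from the frozen problem to the genuine time-dependent operator $A(t)$ is then made through Lemma \ref{lem:perturb} and the perturbation bound \eqref{eqn:perturb}, feeding a weakly singular Volterra/Gronwall estimate.

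Granting the weighted bound, the conclusion follows by a direct computation. Inserting $\|u'(\xi)\|_{L^2}\le c_\epsilon\xi^{\alpha/2-1-\epsilon}$ and rescaling $\xi=s\eta$ gives
\[
\int_s^t(\xi-s)^{-\alpha}\xi^{\alpha/2-1-\epsilon}\,\mathrm{d}\xi=s^{-\alpha/2-\epsilon}\int_1^{t/s}(\eta-1)^{-\alpha}\eta^{\alpha/2-1-\epsilon}\,\mathrm{d}\eta .
\]
The $\eta$-integral is finite uniformly in $t/s$: the factor $(\eta-1)^{-\alpha}$ is integrable at $\eta=1$ because $\alpha<1$, and at infinity the integrand decays like $\eta^{-\alpha/2-1-\epsilon}$, which is integrable since $-\alpha/2-1-\epsilon<-1$. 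Hence the right-hand side is bounded by $c_\epsilon s^{-\alpha/2-\epsilon}$, as claimed. Note that with the sharp exponent $\alpha/2-1$ the same computation already yields the cleaner bound $s^{-\alpha/2}$; the $\epsilon$ merely absorbs any borderline loss in the weighted estimate for $u'$.

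The main obstacle is precisely this weighted $L^2$ bound on $u'$ in the time-dependent setting: the baseline well-posedness result (Theorem \ref{thm:weak-sol}) only controls $\partial_t^\alpha u$ in $L^p(0,T;H^{-1}(\Omega))$, not $u'$ pointwise, and the representation formulas that make the stationary estimate transparent are unavailable for $A(t)$. Closing the frozen-coefficient perturbation argument near $t=0$, where the singular weight $\xi^{\alpha/2-1}$ and the perturbation kernel $|t-\tau|$ interact, is the delicate point, and is where the small parameter $\epsilon$ is spent.
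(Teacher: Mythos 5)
Your proposal follows essentially the same route as the paper: reduce via Minkowski's inequality to the weighted bound $\|u'(t)\|_{L^2(\Omega)}\le c\,t^{\alpha/2-1}$, and establish that bound by differentiating a frozen-coefficient Duhamel representation, using the smoothing estimates for the solution operators $F_*'$ and $E_*$ together with the perturbation estimates (Lemma \ref{lem:perturb}, \eqref{eqn:perturb}) and a Gronwall argument --- which is exactly the paper's proof of its claim \eqref{eqn:der-bound}. The only difference is that you evaluate the final scalar integral by the rescaling $\xi=s\eta$ instead of the paper's splitting of the power $\xi^{\alpha/2-1}$; this is cosmetic, and in fact shows the $\epsilon$-loss in that last step is unnecessary.
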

\begin{proof}
Under the given data regularity assumption,  we claim
\begin{align}\label{eqn:der-bound}
  \| u '(t) \|_{L^2(\Omega)} \le c t^{\frac\alpha2-1} ,\quad \forall t\in(0,T].
\end{align}
Then for any $\epsilon>0$, the desired assertion follows directly as
\begin{align*}
 &\quad\Big\| \int_s^t(\xi-s)^{-\alpha} u'(\xi) \,\d \xi \Big\|_{L^2\II} \le  \int_s^t(\xi-s)^{-\alpha} \| u'(\xi)\|_{L^2\II} \,\d \xi \\
 &\le c \int_s^t(\xi-s)^{-\alpha}  \xi^{\frac\alpha2-1} \,\d \xi
 \le  c s^{-\frac\alpha2-\epsilon}\int_s^t(\xi-s)^{-\alpha}  \xi^{\epsilon+\alpha-1} \,\d \xi\\
&\le c s^{-\frac\alpha2-\epsilon} \int_s^t(\xi-s)^{\epsilon-1}   \,\d \xi   \le c\epsilon^{-1} s^{-\frac\alpha2-\epsilon}.
\end{align*}
It remains to prove the claim \eqref{eqn:der-bound}. We fix $t_*\in (0,T]$, 
and represent the solution $u$ by (with $A_*\equiv A(t_*)$)
\begin{align}\label{eqn:sol-op-rep}
 u(t) = F_*(t) u_0 + \!\int_0^t E_*(s) f(t-s) \d s + \!\int_0^t  E_*(t-s)  (A(t_*)- A(s))u(s) \d s,
\end{align}
where $F_*(t) = \frac{1}{2\pi\rm i}\int_{\Gamma_{\theta,\delta}}e^{zt}z^{\alpha-1}(A_*+z^\alpha)^{-1}\d z$
and $E_*(t) = \frac{1}{2\pi\rm i}\int_{\Gamma_{\theta,\delta}}e^{zt}(A_*+z^\alpha)^{-1}\d z$ denote the
solution operators for the initial data and source, respectively, with the contour
$\Gamma_{\theta,\delta} =\{z=re^{\pm {\rm i}\theta}, r\ge \delta   \}
\cup\{ z= \delta e^{{\rm i}\varphi}: |\varphi|\leq \theta\}$, with $\theta\in(\frac\pi2,\pi)$. The following
smoothing properties hold \cite[Theorem 6.4]{Jin:2021book}:
\begin{equation*}
\|  F_*'(t) v\|_{L^2\II} \le c t^{\frac{\alpha}{2}-1}\|   \nabla v \|_{L^2\II}~~\text{and}~~  \| E_*(t) v  \|_{L^2\II} \le c t^{\beta\alpha-1} \| A^{\beta-1}v \|_{L^2\II}.
\end{equation*}
Meanwhile, it follows from the representation \eqref{eqn:sol-op-rep} of $u$ that
\begin{equation*}
\begin{aligned}
  u'(t) &= F_*'(t) u_0 + E_*(t)f(0) + \int_0^t E_*(s) \frac{\d}{\d t}f(t-s) \,\d s + E_*(t)  (A_*- A(0))u_0\\
     & \quad + \int_0^t E_*(s)((A_*-A(t-s))\frac{\d}{\d t}u(t-s) + [\frac{\d}{\d t}A(t-s)]u(t-s))\,\d s.
\end{aligned}
\end{equation*}
Taking $L^2\II$ norm on both sides,  setting $t$ to $t_*$ and the perturbation estimate \eqref{eqn:perturb} lead to
\begin{align*}
 \| u'(t_*) \|_{L^2\II} &= c t_*^{\frac\alpha2-1}(\| {\nabla u_0}\|_{L^2\II} +  \|f(0)\|_{H^{-1}\II}) + c\int_0^{t_*}  \| u'(s) \|_{L^2\II} \,\d s\\
   &\quad + c\int_0^{t_*} (t_*-s)^{\frac\alpha2-1} \| f'(s)\|_{H^{-1}\II} \,\d s  + c \int_0^{t_*}  ({t_*}-s)^{\frac\alpha2-1}\|  \nabla u(s)\|_{L^2\II}\,\d s.
\end{align*}
Given the regularity of $u_0$ and $f$, we have $u\in C([0,T]; L^2\II) \cap L^p (0,T; H_0^1\II)$ for
any $p\in(\frac2\alpha,\infty)$, cf. Theorem \ref{thm:weak-sol}, which implies
$  \int_0^t  (t-s)^{\frac\alpha2-1}\|  \nabla u(s)\|_{L^2\II}\,\d s < c$, for $ t\in(0,T].$ Thus, we obtain
\begin{align*}
  \| u'(t_*) \|_{L^2\II} &\le c t_*^{\frac\alpha2-1}   + \int_0^{t_*}  \| u'(s) \|_{L^2\II} \,\d s,  \quad \forall t_*\in(0,T].
\end{align*}
Then the standard Gronwall's inequality implies the desired claim \eqref{eqn:der-bound}, completing the proof of the proposition.
\end{proof}

\section{Conditional stability}\label{sec:stab}

In this section, we establish two novel conditional stability results for the concerned inverse problem,
which serve as a benchmark for the convergence rates of the numerical approximations. To the best of our
knowledge, they represent the first stability results for the concerned inverse problem, and are of
independent interest. We introduce a positivity condition, with $\mathrm{dist}(x,\partial\Omega) =
\inf_{x'\in\partial\Omega}|x-x'|$, which will be verified for a class of problem data.
\begin{definition}\label{def:condP1}
The solution $u$ to problem \eqref{eqn:fde} is said to satisfy the $\beta$-positivity condition
with $\beta\ge0$, if for any $(x,t)\in \Omega\times (0,T)$
\begin{equation*}
 q(x,t) |\nabla u(q)(x, t)|^2 +(f(x,t) -\partial_t^\alpha u(q)(x, t))u(q)(x, t) \ge c \,\mathrm{dist}(x, \partial\Omega)^\beta.
\end{equation*}
\end{definition}

Now we state the first conditional stability estimate for the inverse problem.
\begin{theorem}\label{thm:stab1}
Let $u_0$, $f$ and $q_i$, $i=1,2$, satisfy Assumption \ref{ass:data-1} with
$r>d$ and $p>\frac{2r}{\alpha(r-d)}$, and $ \| \nabla q_i \|_{L^2(0,T;L^2\II)}\le c$, $i=1,2.$
Let $u_i\equiv u(q_i)$ be the solution to problem \eqref{eqn:fde}.
Then there holds 
\begin{align*}
&\quad \int_0^T\int_\Omega \Big(\frac{q_1-q_2}{q_1}\Big)^2  \big(q_1 |\nabla u_1 |^2 +( f -\partial_t^\alpha u_1)u_1\big)\,\d x \d t\\
&\le c\big(\|  \nabla (u_1 - u_2)\|_{L^2(0,T;L^2\II)} + \|   {\partial_t^\alpha} (u_1-u_2)\|_{L^2(0,T; H^{-1}\II)} \big).
\end{align*}
Further, if the solution $u_1$ to problem \eqref{eqn:fde} satisfies the $\beta$-positivity condition, then
\begin{align*}
\|q_1-q_2\|_{L^2(0,T; L^2\II)}
\le &c\big(\|  \nabla (u_1 - u_2) \|_{L^2(0,T; L^2\II)} + \|   {\partial_t^\alpha} (u_1-u_2) \|_{L^2(0,T; H^{-1}\II)} \big)^{\frac{1}{2(1+\beta)}}.
\end{align*}
\end{theorem}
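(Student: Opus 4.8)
The plan is to reduce the weighted left-hand side of the first estimate to a duality pairing involving the solution difference $w := u_1 - u_2$ via a single carefully chosen test function, and then to strip off the boundary-degenerate weight in the second estimate by a boundary-layer splitting optimized in a thickness parameter. First I would write the governing equation for $w$: subtracting the two weak formulations and using the splitting $q_1\nabla u_1 - q_2\nabla u_2 = (q_1-q_2)\nabla u_1 + q_2\nabla w$ gives, for a.e. $t$ and all admissible $\phi$,
\[
(\partial_t^\alpha w, \phi) + (q_2\nabla w, \nabla\phi) = -((q_1-q_2)\nabla u_1, \nabla\phi).
\]
Setting $\rho := (q_1-q_2)/q_1$ (bounded, since $q_i\in\mathcal{A}$), the novel choice is the test function $\varphi := \rho\, u_1$, which vanishes on $\partial\Omega$.

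The central algebraic identity I would establish is that, for a.e. $t$,
\[
((q_1-q_2)\nabla u_1, \nabla\varphi) = \tfrac12\int_\Omega \rho^2\big(q_1|\nabla u_1|^2 + (f-\partial_t^\alpha u_1)u_1\big)\,\d x.
\]
To prove it I would test the $u_1$-equation against $\rho^2 u_1$, giving $(q_1\nabla u_1, \nabla(\rho^2 u_1)) = (f-\partial_t^\alpha u_1, \rho^2 u_1)$; expanding $\nabla(\rho^2 u_1)$ and $\nabla\varphi = \rho\nabla u_1 + u_1\nabla\rho$ and collecting the $\int q_1\rho^2|\nabla u_1|^2$ and $\int q_1\rho u_1\,\nabla u_1\cdot\nabla\rho$ contributions yields the identity. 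Combining it with the $w$-equation tested against $\varphi$ produces
\[
\tfrac12\int_\Omega \rho^2\big(q_1|\nabla u_1|^2 + (f-\partial_t^\alpha u_1)u_1\big)\,\d x = -(\partial_t^\alpha w, \varphi) - (q_2\nabla w, \nabla\varphi).
\]

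Next I would integrate over $(0,T)$ and bound the right-hand side by Cauchy--Schwarz and duality,
\[
\Big|\int_0^T(\partial_t^\alpha w, \varphi)\,\d t\Big| \le \|\partial_t^\alpha w\|_{L^2(0,T;H^{-1}(\Omega))}\|\varphi\|_{L^2(0,T;H^1_0(\Omega))}, \qquad \Big|\int_0^T(q_2\nabla w,\nabla\varphi)\,\d t\Big| \le c\|\nabla w\|_{L^2(0,T;L^2(\Omega))}\|\nabla\varphi\|_{L^2(0,T;L^2(\Omega))}.
\]
The crux is then showing $\|\varphi\|_{L^2(0,T;H^1_0(\Omega))} \le c$ uniformly. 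Since $\nabla\varphi = \rho\nabla u_1 + u_1\nabla\rho$ with $\rho$ bounded, the first term is controlled by $\|\nabla u_1\|_{L^2(0,T;L^2(\Omega))}$; for the second I would use $|\nabla\rho| \le c(|\nabla q_1| + |\nabla q_2|)$ with the hypotheses $\|\nabla q_i\|_{L^2(0,T;L^2(\Omega))}\le c$, together with the bound $u_1 \in L^\infty((0,T)\times\Omega)$ supplied by Theorem \ref{thm:weak-sol} under $r>d$, $p>\tfrac{2r}{\alpha(r-d)}$. This gives the first estimate. I expect this $L^\infty$-control of $u_1$ (and the attendant verification that $\varphi$ and $\rho^2 u_1$ are legitimate test functions) to be the main obstacle, as it is precisely where the smoothing-limited regularity theory of Section \ref{sec:cont} must be invoked.

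Finally, for the second estimate I would apply the $\beta$-positivity condition to replace the integrand by the weight and use $\rho^2 \ge (q_1-q_2)^2/c_1^2$, obtaining $\int_0^T\int_\Omega (q_1-q_2)^2\,\mathrm{dist}(x,\partial\Omega)^\beta\,\d x\,\d t \le c\,R$, where $R$ abbreviates the right-hand side of the first estimate. To recover the unweighted norm I would split $\Omega$ into the interior region $\{\mathrm{dist}(x,\partial\Omega)>\epsilon\}$, where the weight is bounded below by $\epsilon^\beta$ so the contribution is $\le c\,\epsilon^{-\beta}R$, and the boundary layer $\{\mathrm{dist}(x,\partial\Omega)\le\epsilon\}$, whose measure is $O(\epsilon)$ on a convex polyhedral domain and on which $(q_1-q_2)^2 \le (c_1-c_0)^2$, contributing $\le c\,\epsilon$. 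Thus $\|q_1-q_2\|_{L^2(0,T;L^2(\Omega))}^2 \le c(\epsilon^{-\beta}R + \epsilon)$, and optimizing with $\epsilon \sim R^{1/(1+\beta)}$ yields $\|q_1-q_2\|_{L^2(0,T;L^2(\Omega))}^2 \le c\,R^{1/(1+\beta)}$, i.e. the claimed exponent $\tfrac1{2(1+\beta)}$. This last step is routine once the weighted estimate is in hand.
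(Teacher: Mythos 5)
Your proposal is correct and follows essentially the same route as the paper: the same test function $\varphi=\frac{q_1-q_2}{q_1}u_1$, the same crucial weighted identity (your derivation via testing the $u_1$-equation against $\rho^2u_1$ is just the weak-form version of the paper's integration-by-parts computation), the same duality bound requiring $\|\nabla\varphi\|_{L^2(0,T;L^2(\Omega))}\le c$ via the $L^\infty$ bound on $u_1$ from Theorem \ref{thm:weak-sol}, and the same boundary-layer splitting optimized in the layer thickness. No substantive differences.
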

\begin{proof}
Assumption \ref{ass:data-1} and Theorem \ref{thm:weak-sol} imply that problem \eqref{eqn:fde} has a
weak solution  $u_i \in L^\infty(\Omega\times(0,T)) \cap L^2(0,T; H_0^1\II)$.
This and the assumption $q_i \in \mathcal{A} \cap L^2(0,T; H_0^1\II$ imply
$\varphi = \frac{q_1-q_2}{q_1} u_1 \in L^2(0,T; H_0^1\II)$. Indeed, the choice $\varphi$ gives
$\nabla \varphi = \frac{q_1-q_2}{q_1}\nabla u_1 + \frac{q_1\nabla (q_1-q_2)-(q_1-q_2)\nabla q_1}{q_1^2}u_1$.
Then by the triangle inequality, Assumption \ref{ass:data-1} and the condition $\| \nabla q_i\|_{L^2(0,T;L^2\II)}\le c$, $i=1,2$, we have
\begin{equation*}
\begin{split}
  \| \nabla \fy \|_{L^2(0,T;L^2\II)}^2 &\le c \int_0^T \|  \frac{q_1 - q_2}{q_1} (t)\|_{L^\infty\II}^2 \|  \nabla u_1(t) \|_{L^2\II} ^2
  \d t\\
  &\quad  + c \int_0^T\| \frac{q_1\nabla (q_1-q_2) - (q_1-q_2)\nabla q_1}{q_1^2} (t) \|_{L^2\II}^2 \|   u_1(t) \|_{L^\infty \II}^2 \,\d t\\
  &\le   c \int_0^T   \|  \nabla u_1 (t) \|_{L^2\II} ^2 \,\d t +  c \|u_1\|_{L^\infty((0,T)\times \Omega)}^2 .
\end{split}
\end{equation*}
Then by the regularity $u_1 \in L^2(0,T;H_0^1\II) \cap L^\infty((0,T)\times \Omega)$ from
Theorem \ref{thm:weak-sol}, we deduce $ \| \nabla \fy \|_{L^2(0,T;L^2\II)} \le c.$ By taking $\varphi
(t)= (\frac{q_1-q_2}{q_1}u_1)(t)$ and integration by parts,
\begin{align*}
  2((q_1-q_2)\nabla u_1,\nabla\varphi)(t)&  = (\tfrac{q_1-q_2}{q_1}q_1\nabla u_1,\nabla\varphi)(t) + ((q_1-q_2)\nabla u_1,\nabla\varphi)(t)\\
   & =-(q_1\nabla (\tfrac{q_1-q_2}{q_1})\cdot\nabla u_1,\varphi)(t) - ((\tfrac{q_1-q_2}{q_1})\nabla\cdot(q_1\nabla u_1),\varphi)(t)\\
    &\quad + ((q_1-q_2)\nabla u_1,\nabla\varphi)(t).
\end{align*}
Using the identity {$-\nabla \cdot(q_1\nabla u_1)=f-\partial_t^\alpha u_1$} and inserting the choice $\varphi=\frac{q_1-q_2}{q_1}u_1$ in the third term gives
\begin{align*}
  2((q_1-q_2)\nabla u_1,\nabla\varphi)(t)
   & =-(q_1\nabla (\tfrac{q_1-q_2}{q_1})\cdot\nabla u_1,\tfrac{q_1-q_2}{q_1}u_1)(t) + ((\tfrac{q_1-q_2}{q_1})^2{ (f-\partial_t^\alpha u_1)},u_1)(t)\\
    &\quad + ((q_1-q_2)\nabla u_1,\nabla(\tfrac{q_1-q_2}{q_1})u_1 + \tfrac{q_1-q_2}{q_1}\nabla u_1)(t).
\end{align*}
Collecting the terms gives the following crucial identity
\begin{align}\label{eqn:crucial-identity}
((q_1-q_2)\nabla u_1,\nabla\varphi)(t)&= \frac{1}{2}\int_\Omega \Big(\frac{q_1-q_2}{q_1}(t)\Big)^2
\big( q_1(t)|\nabla u_1(t)  |^2
+(f(t)-\partial_t^\alpha u_1(t))u_1(t)\big)\,\d x.
\end{align}
Meanwhile, the variational formulation \eqref{eqn:weak} for $u_i$ implies that for any fixed $t\in(0,T)$,
\begin{align}
 &\quad ((q_1-q_2)\nabla u_1,\nabla\varphi)(t)=  (q_1\nabla u_1,\nabla\varphi)(t) -(q_2\nabla u_1,\nabla\varphi)(t) \nonumber\\
 & = (f, \varphi)(t) - (\partial_t^\alpha u_1 ,\varphi)(t)  -(q_2\nabla u_1,\nabla\varphi)(t) \nonumber\\
 & = (\partial_t^\alpha u_2, \varphi )(t) + (q_2 \nabla u_2,\nabla\varphi)(t)-(\partial_t^\alpha u_1 ,\varphi)(t)-(q_2\nabla u_1,\nabla\varphi)(t)\nonumber\\
  &= {-(q_2\nabla(u_1-u_2),\nabla \varphi)(t)} - {(\partial_t^\alpha(u_1-u_2), \varphi)(t)}.\label{eqn:identity-1}
\end{align}
By the Cauchy--Schwarz inequality, we have
\begin{equation*}
\begin{split}
 &((q_1-q_2)\nabla u_1,\nabla\varphi)(t)\\
 \le& c \big(\|  \nabla (u_1 - u_2)(t) \|_{L^2\II} \|  q_2 \|_{L^\infty\II}
 \| \nabla \fy \|_{L^2\II} + \|   {\partial_t^\alpha} (u_1-u_2) (t)  \|_{H^{-1}\II} \| \fy \|_{H^1\II}\big)\\
 \le& c \big(\|  \nabla (u_1 - u_2)(t) \|_{L^2\II}
 + \|   {\partial_t^\alpha} (u_1-u_2) (t)  \|_{H^{-1}\II} \big) \| \nabla \fy(t) \|_{L^2\II}.
\end{split}
\end{equation*}
Since $\|\nabla \varphi(t)\|_{L^2(0,T;L^2(\Omega))}\leq c$, we obtain
\begin{align*}
 \int_0^T((q_1-q_2)\nabla u_1,\nabla\varphi)(t)\d t \le c \big(\|\nabla (u_1 - u_2) \|_{L^2(0,T;L^2\II)}
 + \| {\partial_t^\alpha} (u_1-u_2)\|_{L^2(0,T;H^{-1}\II)} \big).
\end{align*}
This and \eqref{eqn:crucial-identity} give the first estimate.
Next, we decompose the domain $\Omega$ into two disjoint sets $\Omega=\Omega_\rho\cup
\Omega_\rho^c$. with $\Omega_\rho=\{x\in\Omega:{\rm dist}(x,\partial\Omega)\geq\rho\}$
and $\Omega_\rho^c=\Omega\setminus\Omega_\rho$, with $\rho>0$ to be chosen.
On the subdomain $\Omega_\rho$, the $\beta$-positivity condition implies
 \begin{align*}
     &\quad \int_0^T\int_{\Omega_\rho}(q_1-q_2)^2\d x \d t=\rho^{-\beta}\int_0^T\int_{\Omega_\rho }(q_1-q_2)^2\rho^{\beta}\d x\d t\\
     &\leq \rho^{-\beta}\int_0^T\int_{\Omega_\rho }(q_1-q_2)^2\mathrm{dist}(x,\partial\Omega)^{\beta}dx\d t\\
     & \leq c\rho^{-\beta}\int_0^T\int_{\Omega_\rho }(q_1-q_2)^2(q_1(x,t) |\nabla u_1|^2 +(f-\partial_t^\alpha u_1)u_1)\d x\d t\\
     &\leq c\rho^{-\beta}c \big(\|\nabla (u_1 - u_2) \|_{L^2(0,T;L^2\II)}
 + \| {\partial_t^\alpha} (u_1-u_2)\|_{L^2(0,T;H^{-1}\II)} \big).
 \end{align*}
By the box constraint of $\mathcal{A}$, we have
$$\int_0^T\int_{\Omega_\rho^c}(q_1-q_2)^2\d x\d t \leq c_T|\Omega_\rho^c|\leq c\rho.$$
Then the desired result follows by balancing the last two estimates with $\rho$.
\end{proof}

Next we present an alternative conditional stability estimate without the term $\partial_t^\alpha(u_1-u_2)$,
thereby relaxing the temporal regularity assumption on $u(q^\dag)$.
\begin{theorem}\label{thm:stab2}
Let $u_0$, $f$, and $q_i$, $i=1,2$, satisfy the conditions in Assumption \ref{ass:data-1} with $r>d$
and $p>\frac{2r}{\alpha(r-d)}$ and condition \eqref{Cond-q}, and $u_i\equiv u(q_i)$ be the solution to problem \eqref{eqn:fde}.
Then there holds
\begin{align*}
 &\int_0^T\!\! \int_0^t \!\! \int_\Omega \Big(\frac{q_1-q_2}{q_1}(s)\Big)^2  \big(  q_1(s) |\nabla u_1(s)  |^2 +( f(s) -\partial_s^\alpha u_1(s))u_1(s)  \big) \,\d x \,\d s \, \d t\le c\|  \nabla (u_1 - u_2)\|_{L^2(0,T; L^2\II)}.
\end{align*}
Further, if the solution $u_1$ of problem \eqref{eqn:fde} satisfies the $\beta$-positivity condition, then
\begin{align*}
\|q_1-q_2\|_{L^2(0,T;L^2\II)} \le c \|  \nabla (u_1 - u_2) \|_{ {L^2}(0,T; L^2\II)}  ^{\frac{1}{2(1+\beta)}}.
\end{align*}
\end{theorem}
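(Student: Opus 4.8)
The plan is to reuse the algebraic structure already established in the proof of Theorem \ref{thm:stab1}. With $w := u_1-u_2$ and the test function $\varphi(s) = \frac{q_1-q_2}{q_1}(s)\,u_1(s)$, the crucial identity \eqref{eqn:crucial-identity} shows that the inner integrand over $\Omega$ in the asserted left-hand side equals $2\,((q_1-q_2)\nabla u_1,\nabla\varphi)(s)$, while the subtracted weak formulations \eqref{eqn:identity-1} give
\[
((q_1-q_2)\nabla u_1,\nabla\varphi)(s) = -(q_2\nabla w,\nabla\varphi)(s) - (\partial_s^\alpha w,\varphi)(s).
\]
Hence, after integrating in $s$ over $(0,t)$ and in $t$ over $(0,T)$, the left-hand side splits into a gradient part and a fractional part. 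The gradient part $\int_0^T\!\int_0^t (q_2\nabla w,\nabla\varphi)(s)\,\d s\,\d t = \int_0^T (T-s)(q_2\nabla w,\nabla\varphi)(s)\,\d s$ is immediately controlled by $c\,\|\nabla w\|_{L^2(0,T;L^2(\Omega))}$ via Fubini and Cauchy--Schwarz, once $\|\nabla\varphi\|_{L^2(0,T;L^2(\Omega))}\le c$ is known (established exactly as in Theorem \ref{thm:stab1}, using $u_1\in L^\infty\cap L^2(0,T;H^1_0(\Omega))$ and the bound on $\nabla q_i$). The whole difficulty is thus concentrated in the fractional part, and the point of Theorem \ref{thm:stab2} is to bound it by $\|\nabla w\|$ alone, without invoking $\partial_t^\alpha w$.

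For the fractional part I would integrate by parts in time. Writing $\partial_s^\alpha w = I^{1-\alpha}_{0+}w'$ (Caputo equals the Riemann--Liouville integral of $w'$, legitimate since $w(0)=u_0-u_0=0$) and using the fractional Fubini rule $\int_0^t (I^{1-\alpha}_{0+}w')\,\varphi\,\d s = \int_0^t w'\,(I^{1-\alpha}_{t-}\varphi)\,\d s$, I transfer the fractional integral onto the test function as the backward kernel $(I^{1-\alpha}_{t-}\varphi)(s)=\frac{1}{\Gamma(1-\alpha)}\int_s^t(\xi-s)^{-\alpha}\varphi(\xi)\,\d\xi$. The outer integration $\int_0^T(\cdot)\,\d t$ and Fubini then collapse this to $\int_0^T (w'(s),\Psi(s))\,\d s$, where $\Psi(s)=\frac{1}{\Gamma(1-\alpha)}\int_s^T (\xi-s)^{-\alpha}(T-\xi)\varphi(\xi)\,\d\xi$ carries a crucial extra factor $(T-\xi)$. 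An ordinary integration by parts in $s$ now produces no boundary terms: at $s=0$ because $w(0)=0$, and at $s=T$ because $\Psi(T)=0$. This last cancellation is exactly why the double time integral is used: the weight $(T-\xi)$ forces the terminal value to vanish, so that
\[
\Psi'(s)=\tfrac{1}{\Gamma(1-\alpha)}\int_s^T(\xi-s)^{-\alpha}\big(-\varphi(\xi)+(T-\xi)\varphi'(\xi)\big)\,\d\xi
\]
contains no singular terminal contribution $(T-s)^{-\alpha}$, which would otherwise fail to lie in $L^2(0,T)$ once $\alpha\ge\tfrac12$.

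I expect the estimation of $\Psi'$ to be the main obstacle, and it is where the refined regularity enters. Since $\varphi'=\partial_t\big(\frac{q_1-q_2}{q_1}\big)u_1 + \frac{q_1-q_2}{q_1}\,u_1'$, condition \eqref{Cond-q} makes the first summand bounded, while the second carries the solution's time derivative $u_1'$. The term $\int_s^T(\xi-s)^{-\alpha}(T-\xi)\frac{q_1-q_2}{q_1}u_1'(\xi)\,\d\xi$ is then bounded in $L^2(\Omega)$ by $c\int_s^T(\xi-s)^{-\alpha}\|u_1'(\xi)\|_{L^2(\Omega)}\,\d\xi$, and the weighted derivative bound $\|u_1'(\xi)\|_{L^2(\Omega)}\le c\,\xi^{\frac\alpha2-1}$ together with the convolution estimate of Proposition \ref{prop:reg-2} yields $\|\Psi'(s)\|_{L^2(\Omega)}\le c + c_\epsilon\,s^{-\frac\alpha2-\epsilon}$, which lies in $L^2(0,T)$ for $\epsilon$ small since $\alpha<1$. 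Combining Poincar\'e's inequality $\|w(s)\|_{L^2(\Omega)}\le c\|\nabla w(s)\|_{L^2(\Omega)}$ with Cauchy--Schwarz in $t$ then bounds the fractional part by $c\,\|\nabla w\|_{L^2(0,T;L^2(\Omega))}$, establishing the first assertion.

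Finally, for the second assertion I would argue as at the end of Theorem \ref{thm:stab1}: split $\Omega=\Omega_\rho\cup\Omega_\rho^c$, use the $\beta$-positivity to dominate $\big(\frac{q_1-q_2}{q_1}\big)^2\mathrm{dist}(x,\partial\Omega)^\beta$ by the positive integrand on $\Omega_\rho$, bound the boundary strip $\Omega_\rho^c$ by the box constraint $|q_1-q_2|\le c_1-c_0$ together with $|\Omega_\rho^c|\le c\rho$, and optimize over $\rho$. The one subtle point here (absent in Theorem \ref{thm:stab1}) is the temporal weight $(T-s)$ produced by the inner integral $\int_0^t$, which degenerates as $s\to T$; I would control it via the box constraint on a short terminal time layer before balancing in $\rho$ to reach the asserted Hölder stability estimate.
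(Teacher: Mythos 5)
Your proposal is correct and follows essentially the same route as the paper: both proofs reduce the claim to the fractional term, integrate by parts in time so that the derivative lands on the test function $\varphi=\frac{q_1-q_2}{q_1}u_1$, invoke Proposition \ref{prop:reg-2} (via $\|u_1'(\xi)\|_{L^2(\Omega)}\le c\xi^{\frac{\alpha}{2}-1}$) to control the resulting weakly singular convolution, and finish with Poincar\'e's inequality and the $\Omega_\rho$/$\Omega_\rho^c$ splitting under the $\beta$-positivity condition. The only difference is bookkeeping: you perform the outer $t$-integration first, collapsing everything into the single kernel $\Psi(s)=c_\alpha\int_s^T(\xi-s)^{-\alpha}(T-\xi)\varphi(\xi)\,\d\xi$ whose $(T-\xi)$ weight makes the terminal boundary term vanish, whereas the paper keeps the inner integral and bounds the corresponding boundary contribution $c_\alpha\int_0^t((u_1-u_2)(s),(t-s)^{-\alpha}\varphi(t))\,\d s$ separately as an integrable term.
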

\begin{proof}
By the argument for Theorem \ref{thm:stab1}, it suffices to bound the term $\int _0^T\int_0^t
(\partial_s^\alpha(u_1-u_2)(s), \varphi(s))  \,\d s \,\d t$. By applying integration by
parts in time $s$, since $u_1(0)-u_2(0)=0$, we obtain
\begin{align*}
  \int_0^t (\partial_s^\alpha(u_1-u_2)(s),\varphi(s))  \,\d s
 =&   \int_0^t \big((u_1-u_2)(s), {{_s\partial^\alpha_t}} \fy(s)  \big)\,\d s\\
  &+c_\alpha \int_0^t \big((u_1-u_2)(s), {(t-s)^{-\alpha}\fy(t)}  \big)\,\d s:={\rm I} + {\rm II},
\end{align*}
with $c_\alpha=\frac{1}{\Gamma(1-\alpha)}$ and ${{_s\partial^\alpha_t}} \fy(s) = -c_\alpha\int_s^t
(\xi-s)^{-\alpha}\fy'(\xi)\d\xi$ denoting the right-sided Djrbashian-Caputo fractional derivative. Upon inserting
the test function $\varphi(t) = \frac{q_1-q_2}{q_1} u_1(t)$ into the preceding identity, since $u_1\in
L^\infty((0,T)\times\Omega)$ (cf. Theorem \ref{thm:weak-sol}),
and $\|q_i\|_{L^\infty(0,T;L^\infty(\Omega))}\leq c_1$, by Proposition \ref{prop:reg-2}, we deduce
\begin{align*}
 \| {_s\partial^\alpha_t} \varphi(s) \|_{L^2\II} \le& c \int_s^t (\xi-s)^{-\alpha} (\| u_1'(\xi)  \|_{L^2\II}
 +  \| u_1(\xi)  \|_{L^2\II}) \,\d \xi\le c_\epsilon s^{-\frac\alpha2-\epsilon} + c(t-s),
\end{align*}
for any small $\epsilon>0$.
Thus, choosing $\epsilon\in (0,\frac{1-\alpha}{2})$ leads to
\begin{align*}
|{\rm I}|&\le  \int_0^T  \int_0^t \|\nabla(u_1-u_2)(s)\|_{L^2\II} \| {{_s\partial^\alpha_t}} \fy(s)\|_{H^{-1}\II}\,\d s \,\d t \\
 &\le  c_{\epsilon} \int_0^T  \int_0^t \|\nabla (u_1-u_2)(s)\|_{L^2\II}  ({s^{-\frac\alpha2-\epsilon}}+(t-s))\,\d s \,\d t\\
 &\le c    \|\nabla (u_1-u_2)\|_{L^2(0,T;L^2\II)}.
\end{align*}
Meanwhile, since $q_i\in\mathcal{A}$ and $u\in L^\infty(0,T;L^2(\Omega))$, cf. Theorem \ref{thm:weak-sol},
the bound $ \| \fy \|_{L^\infty(0,T;L^2\II)} \le c$ holds. Hence, by Poincar\'{e}'s inequality,
\begin{equation*}
\begin{split}
|{\rm II}|& \le  \int_0^T  \int_0^t \|(u_1-u_2)(s)\|_{L^2\II} (t-s)^{-\alpha} \,\d s \,\d t
\le c    \|\nabla(u_1-u_2)\|_{L^2(0,T;L^2\II)} .
\end{split}
\end{equation*}
The second assertion follows directly exactly as in Theorem \ref{thm:stab1}, and hence the proof is omitted.
\end{proof}

\begin{remark}\label{rmk:stability}
Under a slightly stronger assumption on problem data, i.e. Assumption \ref{ass:data-2},
we can derive a stability for $u(q_i)\in L^2(0,T;L^2(\Omega))$ using the
Gagliardo-Nirenberg interpolation inequality (e.g., \cite{BrezisMironescu:2018})
\begin{equation*}
  \|u\|_{H^1(\Omega)}\leq c\|u\|_{L^2(\Omega)}^\frac12\|u\|_{H^2(\Omega)}^\frac12.
\end{equation*}
Under Assumption \ref{ass:data-2}, by Proposition \ref{prop:reg-1}, we have the \textit{a priori}
regularity $u(q_i)\in L^2(0,T;H^2(\Omega))$. Then it follows directly from Theorem \ref{thm:stab2} that
\begin{align*}
 &\quad\int_0^T \int_0^t  \int_\Omega \Big(\frac{q_1-q_2}{q_1}(s)\Big)^2  \big(  q_1(s) |\nabla u_1(s)  |^2 +( f(s) -\partial_s^\alpha u_1(s))u_1(s)  \big) \,\d x \d s  \d t
 \le c\| u_1 - u_2\|_{L^2(0,T; L^2\II)}^\frac12.
\end{align*}
Accordingly, if the $\beta$-positivity condition holds, then
\begin{align*}
\|q_1-q_2\|_{L^2(0,T;L^2\II)} \le c \|u_1 - u_2\|_{ {L^2}(0,T; L^2\II)}  ^{\frac{1}{4(1+\beta)}}.
\end{align*}
\end{remark}

The $\beta$-positivity condition plays a central role in deriving the standard $L^2(0,T;L^2(\Omega))$
estimate in Theorems \ref{thm:stab1} and \ref{thm:stab2}. Thus it is important to verify this condition.
Below we give sufficient conditions for the $\beta$-positivity condition, with $\beta=2$ and
$\beta=0$, respectively, for a class of problem data. The main analytic tool is the
maximum principle (see e.g., \cite{LuchkoYamamoto:2017} and \cite[Section 6.5]{Jin:2021book}).
The next two results show the condition for the case of a time-independent diffusion coefficient $q$.
\begin{proposition}\label{prop:beta2-parab}
Let $\Omega$ be a bounded Lipschitz domain, $q\in \Uad$ be time-independent,
$u_0 \in H^2(\Omega)\cap H_0^1(\Omega)$, and $f\in W^{\alpha, p}(0,T;L^2(\Omega))$ with
$p>\frac1\alpha$. Meanwhile, assume that $f \ge c_f >0$ and $\partial_t^\alpha f \le 0$ a.e. in $\Omega
\times[0,T]$, and $u_0\ge0$, $f(0) + \nabla\cdot(q \nabla u_0) \le 0$ a.e. in $\Omega$. Then the
$\beta$-positivity condition holds with $\beta = 2$, with the constant $c$ depending only on $c_0, c_1,
c_f$ and $\Omega$.
\end{proposition}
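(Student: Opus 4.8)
The plan is to use that, for a time-independent $q$, the elliptic residual equals $f-\Dal u=-\nabla\cdot(q\nabla u)$, so, writing $\mathcal{L}u:=-\nabla\cdot(q\nabla u)$, the integrand of the $\beta$-positivity condition becomes
\[
 q|\nabla u|^2 + (f-\Dal u)\,u = q|\nabla u|^2 + (\mathcal{L}u)\,u .
\]
As the first summand is non-negative, it suffices to prove three facts: (a) $u\ge0$; (b) $f-\Dal u\ge c_f$; and (c) the quantitative interior bound $u(x,t)\ge c\,\mathrm{dist}(x,\partial\Omega)^2$. Granting these, $(f-\Dal u)\,u\ge c_f\,u\ge c\,c_f\,\mathrm{dist}(x,\partial\Omega)^2$, which is precisely the $\beta$-positivity condition with $\beta=2$ and a constant depending only on $c_0,c_1,c_f$ and $\Omega$.

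Facts (a) and (b) will both come from the maximum principle for subdiffusion (see \cite{LuchkoYamamoto:2017} and \cite[Section~6.5]{Jin:2021book}). For (a), I apply it to $\Dal u+\mathcal{L}u=f$: since $u_0\ge0$, $f\ge c_f>0$ and $u=0$ on $\partial\Omega$, it gives $u\ge0$. For (b), I run the same principle one fractional order higher. Setting $w:=\Dal u$ and applying $\Dal$ to the identity $\mathcal{L}u=f-\Dal u$, while commuting the (time-independent) spatial operator $\mathcal{L}$ past $\Dal$, yields
\[
 \Dal w+\mathcal{L}w=\Dal f\qquad\text{in }\Omega\times(0,T].
\]
Its lateral value vanishes because $u\equiv0$ on $\partial\Omega$ forces $\Dal u\equiv0$ there; its initial value is $w(0)=f(0)-\mathcal{L}u_0=f(0)+\nabla\cdot(q\nabla u_0)\le0$ by hypothesis; and its source satisfies $\Dal f\le0$ by hypothesis. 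A second application of the maximum principle (to $-w$) then gives $w=\Dal u\le0$, so that $f-\Dal u\ge f\ge c_f$, which is (b) and also shows $\mathcal{L}u\ge c_f$ pointwise.

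For (c) I freeze $t$ and regard $u(\cdot,t)\in H^2(\Omega)\hookrightarrow C(\overline\Omega)$ (by \eqref{reg-fde-2}) as a supersolution of a torsion problem: by (a)--(b) one has $u\ge0$ and $-\nabla\cdot(q\nabla u)\ge c_f$ in $\Omega$. Fixing $x_0\in\Omega$ with $d_0:=\mathrm{dist}(x_0,\partial\Omega)$ and letting $\bar w$ solve $-\nabla\cdot(q\nabla\bar w)=c_f$ in the ball $B=B(x_0,d_0)\subset\Omega$ with $\bar w=0$ on $\partial B$, the comparison principle applied to $u-\bar w$ (non-negative on $\partial B$, with $-\nabla\cdot(q\nabla(u-\bar w))\ge0$ in $B$) gives $u(x_0)\ge\bar w(x_0)$. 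Rescaling $B$ to the unit ball reduces $\bar w(x_0)$ to the value at the center of the torsion function of a uniformly elliptic operator with ellipticity constants $c_0,c_1$; a radial barrier (for continuous $q$) or the weak Harnack inequality for non-negative supersolutions, equivalently the Green-function lower bound of Littman--Stampacchia--Weinberger (for merely bounded measurable $q$), then yields $\bar w(x_0)\ge c\,c_f\,d_0^2$ with $c=c(d,c_0,c_1)$. Since $x_0$ is arbitrary, $u(\cdot,t)\ge c\,c_f\,\mathrm{dist}(\cdot,\partial\Omega)^2$, which is (c); combining (a)--(c) completes the proof.

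I expect the main obstacle to be the rigorous justification of step (b), namely that $w=\Dal u$ is itself a bona fide (weak) solution of the above subdiffusion equation. One must check that $\Dal$ commutes with $\mathcal{L}$ on $u$ and that the resulting $\Dal w$ is genuinely the Caputo derivative of $w$ (so that no spurious composition $\partial_t^{2\alpha}u$ is produced), that $w$ is continuous up to $t=0$ so that the pointwise initial value $w(0)=f(0)+\nabla\cdot(q\nabla u_0)$ is attained, and that $w$ is regular enough for the maximum principle to apply. This is exactly what the data assumptions $u_0\in H^2(\Omega)\cap H_0^1(\Omega)$ and $f\in W^{\alpha,p}(0,T;L^2(\Omega))$ with $p>\frac1\alpha$ secure, via the smoothing estimates of Section~\ref{sec:cont} (in particular $\Dal f\in L^p(0,T;L^2(\Omega))$ and the bound on $\Dal u$ in \eqref{reg-fde-2}).
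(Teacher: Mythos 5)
Your proposal is correct and follows essentially the same route as the paper: both proofs apply the subdiffusion maximum principle twice (once to $u$ to get $u\ge 0$, once to $w=\Dal u$ --- whose governing equation has source $\Dal f\le 0$ and initial value $f(0)+\nabla\cdot(q\nabla u_0)\le 0$ --- to get $\Dal u\le 0$), and then reduce the $\beta$-positivity condition to the quantitative elliptic lower bound $u(\cdot,t)\ge c\,\mathrm{dist}(\cdot,\partial\Omega)^2$. The only divergence is in that last step: the paper writes $u(x,t)=\int_\Omega G(x,y)(f(y,t)-\Dal u(y,t))\,\d y$ and invokes the Gr\"uter--Widman near-diagonal lower bound $G(x,y)\ge c|x-y|^{2-d}$ for $|x-y|\le\mathrm{dist}(x,\partial\Omega)$ directly on $\Omega$, whereas you first compare $u$ with the torsion function of an inscribed ball and only then use a Green-function (or weak Harnack) lower bound on the rescaled unit ball; the two are interchangeable since they rest on the same elliptic estimate for bounded measurable coefficients, though your detour through the inscribed ball slightly cleans up the dependence of the constant on $c_0,c_1,d$. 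Your flagged concern about rigorously justifying that $w=\Dal u$ solves the differentiated equation with the stated initial value is legitimate, but the paper treats it at the same level of detail (asserting the system for $w$ and its unique solution in $C([0,T];L^2(\Omega))$ from $\Dal f\in L^p(0,T;L^2(\Omega))$), so this is not a gap relative to the paper's own standard.
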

\begin{proof}
Since $u_0\ge0$ and $f>c_f$, the maximum principle for subdiffusion (see \cite{LuchkoYamamoto:2017}) implies
$u \ge 0$ in ${\Omega}\times[0,T]$. Let $w=\partial_t^\alpha u$. Then it satisfies
\begin{equation}\label{fde-w}
\left\{\begin{aligned}
   \partial_t^\alpha w - \nabla\cdot(q \nabla w) &= \partial_t^\alpha f,&& \mbox{in }\Omega\times(0,T], \\
   w & = 0, &&\mbox{on }\partial\Omega\times(0,T],\\
   w(0) &= f(0) + \nabla\cdot(q \nabla u_0),&&\mbox{in }\Omega.
\end{aligned}\right.
\end{equation}
Since $f\in W^{\alpha, p}(0,T;L^2(\Omega))$, we deduce $\partial_t^\alpha f \in L^p(0,T;L^2\II)$. 
Thus, the system \eqref{fde-w} admits a unique solution $w\in C([0,T];L^2\II)$.
By assumption, $\partial_t^\alpha f\leq0$ in $\Omega\times[0,T]$ and $w(0)\leq0$ in $\Omega$.
Then the maximum principle for subdiffusion  (see \cite{LuchkoYamamoto:2017})  implies $\partial_t^\alpha u= w \le 0$ in  $\Omega\times[0,T]$.
Therefore, there holds
 \begin{align}\label{eqn:est-pos}
   q(x) |\nabla u(x,t)|^2 + (f(x,t)-\partial_t^\alpha u(x,t))u(x,t)
&\ge \min(c_0, c_f)(|\nabla u(x,t)|^2 + u(x,t)).
\end{align}
So it suffices to prove $u(x,t)\ge c\, \text{dist}(x,\partial\Omega)^2$ for $(x,t)\in\Omega\times(0,T]$. For
any fixed $t\in(0,T]$, we have $f(x,t) - \partial_t^\alpha u(x,t)\in L^2(\Omega)$. Now consider the following
boundary value problem
\begin{align}\label{eqn:para-ellip}
 \left\{\begin{aligned}
 -  \nabla \cdot(q\nabla u(t)) &= f(t) - \partial_t^\alpha u(t),\quad\text{in } \Omega,\\
   u(t) &= 0, \quad \text{on }  \partial \Omega.
\end{aligned}\right.
\end{align}
Let $G(x,y)$ be Green's function for the elliptic operator $\nabla
\cdot(q\nabla \cdot)$ with a zero Dirichlet boundary condition. Then $G(x,y)$ is nonnegative (by maximum principle)
and satisfies (\cite[Theorem 1.1]{GruterWidman:1982} and \cite[Lemma 3.7]{Bonito:2017})
$G(x,y) \ge c |x-y|^{2-d}$ for $|x-y|\le \rho(x):=\text{dist}(x,\partial\Omega)$.
Thus, for any $(x,t)\in \Omega\times(0,T]$, there holds
\begin{align*}
u(x,t)  &= \int_\Omega G(x,y)  (f(y,t) - \partial_t^\alpha u(y,t)) \,\d y \ge \int_\Omega G(x,y)  f(y,t)\,\d y \ge c_f \int_\Omega G(x,y)   \,\d y\\
&\ge  c_f \int_{|x-y|<\frac{\rho(x)}{2}} G(x,y)   \,\d y \ge c \int_{|x-y|<\frac{\rho(x)}{2}} |x-y|^{2-d} \, \d y \ge c\rho(x)^2.
 \end{align*}
This completes the proof of the proposition.
\end{proof}\vskip5pt

The next result gives sufficient conditions for the $\beta$-positivity condition with
$\beta=0$, under stronger regularity assumptions on the problem data.

\begin{proposition}\label{prop:beta0-parab}
For some $\mu\in(0,1)$, let $\Omega$ be a bounded $C^{2,\mu}$domain,
$f\in C^{1}([0,T]; C(\overline{\Omega}))\cap C([0,T]; C^{\mu}(\overline{\Omega}))$
with $f \ge c_f >0$, $\partial_t^\alpha f \le 0$ in $\overline{\Omega}\times[0,T]$,
and $u_0 \in C^{2,\mu}(\overline{\Omega})\cap H_0^1(\Omega)$ with $u_0\ge 0$ in $\Omega$.
Moreover, let $q\in \Uad\cap C^{1,\mu}( \overline{\Omega})$ {be time-independent}
with $\| q \|_{C^{1,\mu}(\overline{\Omega})}\le c_2$,
and $f(0) + \nabla\cdot(q \nabla u_0) \le 0$ in $\Omega$.
Then the $\beta$-positivity condition holds with $\beta = 0$, with the constant $c$ only depending
on $c_0, c_1, c_2, c_f$ and $\Omega$.
\end{proposition}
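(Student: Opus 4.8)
The plan is to build on the maximum-principle arguments of Proposition \ref{prop:beta2-parab}: the present hypotheses imply those of that proposition, so the same reasoning gives $u\ge 0$ and $\partial_t^\alpha u\le 0$ in $\overline\Omega\times[0,T]$, and hence the pointwise bound \eqref{eqn:est-pos}, namely $q|\nabla u|^2+(f-\partial_t^\alpha u)u\ge \min(c_0,c_f)(|\nabla u|^2+u)$. Since $\beta=0$ demands a \emph{uniform} positive lower bound, I would split $\Omega=\Omega_\rho\cup\Omega_\rho^c$ (with $\rho>0$ fixed below) and bound the right-hand side from below by $u$ on the interior $\Omega_\rho$ and by $|\nabla u|^2$ on the boundary layer $\Omega_\rho^c$. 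On $\Omega_\rho$ the Green's function estimate of Proposition \ref{prop:beta2-parab} already yields $u(x,t)\ge c\rho^2>0$; the crux is therefore a strictly positive lower bound for $|\nabla u|$ in a fixed boundary layer, uniformly in $t$.

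For the gradient bound I would freeze $t$ and view $u(t)$ as the solution of the elliptic problem $-\nabla\cdot(q\nabla u(t))=g(t)$ in $\Omega$, $u(t)=0$ on $\partial\Omega$, where $g(t):=f(t)-\partial_t^\alpha u(t)\ge f(t)\ge c_f>0$ by $\partial_t^\alpha u\le 0$. Let $\phi$ solve the time-independent torsion problem $-\nabla\cdot(q\nabla\phi)=c_f$ in $\Omega$, $\phi=0$ on $\partial\Omega$. Since $g(t)\ge c_f$, the elliptic comparison principle gives $u(t)\ge\phi\ge 0$, with equality on $\partial\Omega$. As $\Omega\in C^{2,\mu}$ satisfies an interior-ball condition and $q\in C^{1,\mu}$, Hopf's boundary-point lemma yields $\partial_n\phi<0$ on $\partial\Omega$, and by compactness of $\partial\Omega$ and continuity of $\nabla\phi$ we get $|\partial_n\phi|\ge 2c>0$ uniformly. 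Comparing outward normal derivatives of the nonnegative function $u(t)-\phi$ (which vanishes on $\partial\Omega$) then gives $|\partial_n u(x_0,t)|\ge|\partial_n\phi(x_0)|\ge 2c$, hence $|\nabla u(x_0,t)|\ge 2c$ for all $x_0\in\partial\Omega$ and $t\in[0,T]$.

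To propagate this into a fixed boundary layer I need the gradients $\{\nabla u(\cdot,t)\}_{t\in[0,T]}$ to share a uniform Hölder modulus up to $\partial\Omega$, i.e. a uniform bound $\sup_t\|u(t)\|_{C^{1,\mu'}(\overline\Omega)}\le c$; this is where the limited smoothing of $\partial_t^\alpha$ bites and is the main obstacle. I would obtain it by first bounding $w=\partial_t^\alpha u$ uniformly in $L^\infty$. Under the present assumptions the data of the $w$-problem \eqref{fde-w} are controlled, since $w(0)=f(0)+\nabla\cdot(q\nabla u_0)\in C^{\mu}(\overline\Omega)$ (using $u_0\in C^{2,\mu}$, $q\in C^{1,\mu}$) and $\partial_t^\alpha f\in C([0,T];C(\overline\Omega))$ (using $f\in C^1([0,T];C(\overline\Omega))$); a maximum-principle comparison of \eqref{fde-w} with spatially constant barriers then gives $\sup_t\|w(t)\|_{L^\infty(\Omega)}\le c$. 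Consequently $\sup_t\|g(t)\|_{L^r(\Omega)}\le c$ for every finite $r$, elliptic $W^{2,r}$-regularity (licensed by the $C^{2,\mu}$ domain and $C^{1,\mu}$ coefficient) gives $\sup_t\|u(t)\|_{W^{2,r}(\Omega)}\le c$, and the Sobolev embedding $W^{2,r}\hookrightarrow C^{1,\mu'}$ with $r>d$ and $\mu'=1-d/r$ furnishes the required uniform $C^{1,\mu'}$ bound. This uniform modulus fixes $\rho>0$ so that $|\nabla u(x,t)|\ge c$ on $\Omega_\rho^c\times[0,T]$, and combining the two regional lower bounds proves the $\beta$-positivity condition with $\beta=0$ and a constant depending only on $c_0,c_1,c_2,c_f$ and $\Omega$.
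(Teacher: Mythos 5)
Your argument reaches the right conclusion but by a genuinely different route from the paper. Both proofs share the first stage: the hypotheses subsume those of Proposition \ref{prop:beta2-parab}, so $u\ge 0$, $\partial_t^\alpha u\le 0$, and the reduction via \eqref{eqn:est-pos} to a lower bound on $|\nabla u(t)|^2+u(t)$ are common ground. For that lower bound the paper argues by contradiction and compactness: it invokes the H\"older regularity theory (Krasnoschok) to get $u\in C([0,T];C^{2,\mu}(\overline\Omega))$, supposes a sequence $\{q^n\}$ with $\|q^n\|_{C^{1,\mu}(\overline\Omega)}\le c_2$ violating the bound, extracts $C^2$-convergent subsequences via Schauder estimates and Arzel\`a--Ascoli, and rules out the limit with the strong maximum principle and Hopf's lemma. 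You instead give a direct barrier construction: comparison of $u(t)$ with the $t$-independent torsion function $\phi$ (legitimate because $f-\partial_t^\alpha u\ge c_f$), a single application of Hopf's lemma to $\phi$ to get a boundary gradient bound for $u(t)$ uniformly in $t$, and a uniform-in-$t$ $C^{1,\mu'}(\overline\Omega)$ bound on $u(t)$ --- obtained through an $L^\infty$ bound on $w=\partial_t^\alpha u$ from \eqref{fde-w} and elliptic $W^{2,r}$ regularity rather than parabolic Schauder theory --- to propagate that bound into a fixed boundary layer, with the interior handled by the Green's function estimate already established in Proposition \ref{prop:beta2-parab}. Your route is more constructive (it produces, in principle, an explicit constant) and bypasses the fractional H\"older regularity result; the paper's route is shorter because the compactness argument absorbs all the quantitative bookkeeping at once.

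One step needs strengthening to deliver the constant-dependence claimed in the statement. You obtain $\inf_{\partial\Omega}|\partial_n\phi|\ge 2c>0$ from the qualitative Hopf lemma plus compactness of $\partial\Omega$ and continuity of $\nabla\phi$; that yields a positive constant for the given $q$, but one depending on $\phi$ and hence on $q$ itself, not only on $c_0,c_1,c_2,c_f$ and $\Omega$ --- and securing this uniformity over the admissible class is precisely what the paper's contradiction argument over sequences $\{q^n\}$ is designed to do. The fix stays within your framework: use the quantitative form of Hopf's lemma, i.e., the explicit barrier on an interior ball of uniform radius $R$ (available since $\Omega\in C^{2,\mu}$), whose constant depends only on the ellipticity and coefficient bounds and on $\inf_{\partial B_{R/2}}\phi$; the latter is bounded below by $cR^2$ via the same Gr\"uter--Widman Green's function estimate you already invoke on $\Omega_\rho$. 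With that replacement (and the routine observation that the elliptic $W^{2,r}$ and Schauder constants depend only on $c_0,c_1,c_2$ and $\Omega$), the argument is complete.
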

\begin{proof}
By the H\"older regularity estimate (\cite[Theorem 2.1]{Krasnoschok:2016} and \cite[Theorem 7.9]{Jin:2021book}),
we have $u\in C([0,T]; C^{2,\mu}(\overline{\Omega}))$ and $\partial_t^\alpha u \in C([0,T]; C^{\mu}(\overline{\Omega}))$.
The argument of Proposition \ref{prop:beta2-parab} implies $\partial_t^\alpha u \le 0$ for all $(x,t)\in \overline{\Omega}\times [0,T]$, and the
lower bound in \eqref{eqn:est-pos} holds. Next we prove that for any $t\in (0,T]$
\begin{equation}\label{eqn:est-prop2}
|\nabla u(t)|^2+u(t)\geq c >0,\quad \text{ a.e. in } ~\Omega.
\end{equation}
Note that for any $t \in (0,T]$, $u(t)$ solves the boundary value problem \eqref{eqn:para-ellip}
with a $C^{\mu}(\overline{\Omega})$ source $F(t):=f(t)-\partial_t^\alpha u(t)\ge f(t) \ge c_f$,
{and the given assumption ensures that equation \eqref{eqn:para-ellip} holds in a strong sense.}
Then the proof of the assertion \eqref{eqn:est-prop2}  follows from Schauder estimates, Hopf's lemma, and a standard
compactness argument \cite[Lemma 3.3]{Bonito:2017}. Next we sketch the proof of the estimate \eqref{eqn:est-prop2} for completeness.

Assume the contrary of \eqref{eqn:est-prop2}, i.e., for any fixed $t\in(0,T)$, there exists
a sequence $\{q^n\}_{n\geq0}\subset \mathcal{A}$ with $\|q^n\|_{C^{1,\mu}(\overline{\Omega})}
\leq c_2$, such that, for each $n\geq 0$, there exists a point $x_n\in \Omega$ with $|\nabla u(q^n)(x_n,t)|^2
+u(q^n)(x_n,t)\leq n^{-1}$. The classical Schauder estimate \cite[Theorem 6.6]{GilbargTrudinger:1983}
implies $\|u(q^n)(t)\|_{C^{2,\mu}(\overline{\Omega})}\leq c$, for some {constant
$c$ is independent of $n$}. Then by
compactness, up to a subsequence, we have: (i) $q^n$ converges in $C^1(\overline{\Omega})$
to a limit $q^*$; (ii) $u(q^n)(t)$ converges in $C^2(\overline{\Omega})$ to a limit $u^*$ and (iii)
$x_n$ converges to a limit $x^*\in \overline{\Omega}$. Therefore, upon passing to limit, $-q^*\Delta u^* - \nabla
q^*\cdot\nabla u^*=F(t)$ holds on $\Omega$, with $u^*=0$ on $\partial\Omega$, and we have $u^*(x^*)=0$
and $\nabla u^*(x^*)=0$. By the strong maximum principle \cite[Theorem 3.5]{GilbargTrudinger:1983},
$x^*$ lies on the boundary $\partial\Omega$, and the condition $\nabla u^*(x^*)=0$
contradicts Hopf's lemma \cite[Lemma 3.4]{GilbargTrudinger:1983}.
\end{proof}

For a space-time dependent coefficient $q(x,t)$, the argument in Propositions \ref{prop:beta2-parab} and
\ref{prop:beta0-parab} does not work any more: applying the operator $\partial_t^\alpha$ to both
sides of problem \eqref{eqn:fde} does not lead to a tractable identity for $\partial_t^\alpha u$,
due to the nonlocality of $\partial_t^\alpha u$. Nonetheless, if $q^\dag$ is separable, i.e.,
$q(x,t) = a(x) b(t)$, then the $\beta$-positivity condition does hold with $\beta=2$, under suitable
conditions. Below the operator $A:H^2\II\cap H_0^1\II \rightarrow L^2\II$ is defined by $A v =
-\nabla\cdot(a\nabla v)$.
\begin{proposition}\label{lem:beta2-parab-2}
Let $\Omega$ be a bounded Lipschitz domain, $q\in \Uad$, condition \eqref{Cond-q} be fulfilled, and
$q(x,t) = a(x) b(t)$ with smooth $a$ and $b$ such that $b(t)\ge b(0)> 0$ for all $t\in(0,T]$.
Suppose that $u_0 \in D(A^2)$ with $u_0\ge0$ a.e. in $\Omega$,
and $f\in L^p(0,T;D(A))$ with $p>\frac1\alpha$, with $f \ge c_f >0$ and
$\partial_t^\alpha f \le 0$ a.e. in $\Omega\times(0,T)$.
Further, for  $F(t):=f(t)-b(t) A u_0$, there hold $F\le 0$ and $AF\ge 0$ a.e. in $\Omega \times(0,T)$.
Then the $\beta$-positivity condition holds with $\beta = 2$, with the constant $c$ depending only on
$c_0, c_1, c_f$ and $\Omega$.
\end{proposition}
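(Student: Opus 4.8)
The plan is to exploit the separable structure $q(x,t)=a(x)b(t)$ in order to recover the key sign information $\partial_t^\alpha u\le 0$ that powered Proposition \ref{prop:beta2-parab}. The obstruction noted before the statement is that $\partial_t^\alpha$ fails to commute with the time-dependent operator; separability lets me instead apply the \emph{time-independent} spatial operator $A$, which does commute. First I would rewrite \eqref{eqn:fde} as $\partial_t^\alpha u + b(t)Au=f$, $u(0)=u_0$, and set $v=u-u_0$. Since $\partial_t^\alpha u_0=0$ and $Av=Au-Au_0$, the shifted function solves
\[
\partial_t^\alpha v + b(t)Av = F,\qquad v(0)=0,
\]
with $F=f-b\,Au_0$ being exactly the quantity in the hypothesis. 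As in Proposition \ref{prop:beta2-parab}, the bounds $u_0\ge0$ and $f\ge c_f$ and the maximum principle for subdiffusion \cite{LuchkoYamamoto:2017} give $u\ge0$ in $\Omega\times[0,T]$.

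The crux is to show $Av\ge0$. Formally $\tilde w:=Av$ solves $\partial_t^\alpha\tilde w + b(t)A\tilde w = AF$ with $\tilde w(0)=0$ and homogeneous Dirichlet data, because $A$ is time-independent and $b(t)$ is scalar. To make this rigorous I would \emph{define} $\tilde w$ to be the unique solution of this subdiffusion problem, which is well posed since $u_0\in D(A^2)$ and $f\in L^p(0,T;D(A))$ force $AF\in L^p(0,T;L^2(\Omega))$; I would then identify $\tilde w=Av$ by uniqueness. Indeed, the bounded $t$-independent operator $A^{-1}$ commutes with both $\partial_t^\alpha$ and $b(t)A$, so $A^{-1}\tilde w$ satisfies the same equation and zero initial condition as $v$, whence $A^{-1}\tilde w=v$ and $\tilde w=Av$. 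As $AF\ge0$ and $\tilde w(0)=0$, the maximum principle yields $\tilde w=Av\ge0$. Then $\partial_t^\alpha u=\partial_t^\alpha v=F-b(t)Av\le F\le0$, using $F\le0$, $b>0$ and $Av\ge0$; in particular $f-\partial_t^\alpha u\ge f\ge c_f$.

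With $\partial_t^\alpha u\le0$ established, the rest follows Proposition \ref{prop:beta2-parab} closely. From $q\ge c_0$, $u\ge0$ and $f-\partial_t^\alpha u\ge c_f$ I obtain the pointwise lower bound
\[
q|\nabla u|^2+(f-\partial_t^\alpha u)u\ge\min(c_0,c_f)\big(|\nabla u|^2+u\big).
\]
For each fixed $t$, $u(t)$ solves $-\nabla\cdot(q(\cdot,t)\nabla u(t))=f(t)-\partial_t^\alpha u(t)\ge c_f$ with $u(t)=0$ on $\partial\Omega$; since $q(\cdot,t)=a\,b(t)$ stays in $[c_0,c_1]$ uniformly in $t$, the Gr\"uter--Widman Green's function estimate used in Proposition \ref{prop:beta2-parab} gives $u(x,t)\ge c\,\mathrm{dist}(x,\partial\Omega)^2$ with a $t$-independent constant. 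Combining the two displays yields the $\beta$-positivity condition with $\beta=2$.

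The main obstacle is the rigorous justification of $Av\ge0$. Identifying $\tilde w$ with $Av$ requires enough spatial smoothness for $Av$ to lie in $D(A)$, so that it inherits homogeneous Dirichlet data and the maximum principle applies; this is precisely why the hypotheses strengthen $u_0$ to $D(A^2)$ and $f$ to $L^p(0,T;D(A))$, guaranteeing $AF\in L^p(0,T;L^2(\Omega))$. The commutation of $A^{-1}$ with the time-dependent operator $b(t)A$---valid only because of the separable form $q=ab$---is the feature that rescues the argument here and that fails for a genuinely space-time coefficient.
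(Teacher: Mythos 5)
Your proof is correct, and its first two thirds coincide with the paper's argument: you pass to $w=u-u_0$ solving $\partial_t^\alpha w+b(t)Aw=F$ with $F=f-bAu_0$, apply the time-independent operator $A$ (rigorously, via uniqueness and the commutation of $A^{-1}$ with $b(t)A$, which is a slightly more careful justification than the paper gives) to conclude $Aw\ge 0$ from $AF\ge0$ and the maximum principle, and then deduce $\partial_t^\alpha u=F-b(t)Aw\le F\le 0$ together with $u\ge0$. Where you diverge is the final step. The paper introduces an auxiliary barrier $\uu$ solving the frozen-coefficient problem $\partial_t^\alpha\uu+b(0)A\uu=f$, proves $\uu\le u$ by a second application of the comparison/maximum principle to $\phi=\uu-u$, and then runs the Green's function argument of Proposition \ref{prop:beta2-parab} on $\uu$; note that this comparison step as written requires a sign condition on $b(t)-b(0)$ (the proof invokes $b(t)\le b(0)$, in tension with the stated hypothesis $b(t)\ge b(0)$). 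You instead apply the Gr\"uter--Widman lower bound directly to the Green's function of $-\nabla\cdot(q(\cdot,t)\nabla\cdot)$ for each fixed $t$, observing that the constant depends only on the ellipticity bounds $c_0,c_1$ and $\Omega$ and is therefore uniform in $t$. This is legitimate --- the elliptic problem \eqref{eqn:para-ellip} at each fixed $t$ has coefficient $q(\cdot,t)\in[c_0,c_1]$ and right-hand side $f(t)-\partial_t^\alpha u(t)\ge c_f$, which is all the Green's function argument needs --- and it buys you a shorter proof that dispenses entirely with the monotonicity assumption on $b$ and with the second comparison argument, at the cost of having to note explicitly that the Gr\"uter--Widman constant is controlled solely by the ellipticity ratio and the domain.
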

\begin{proof}
Let $w=u-u_0$. Then it satisfies
 \begin{equation}\label{fde-rew}
   \partial_t^\alpha w(t) + b(t) A w(t)= F(t),\quad \forall  t\in(0,T],\quad \text{with}~~w(0) = 0.
\end{equation}
Noting that $F(t) \in L^p(0,T; H^2\II\cap H_0^1\II)$ and applying the operator $A$ to \eqref{fde-rew}, we derive that for $v(t)=Aw(t)$,
 \begin{equation*}
   \partial_t^\alpha v(t) + b(t) A v(t)  =  AF(t), \quad \forall  t\in(0,T],\quad \text{with}~~v(0) = 0.
\end{equation*}
Since $ AF \in L^p(0,T; L^2\II)$ with $p>\frac1\alpha$, there exists a unique weak solution
$v\in L^p(0,T;H^2\II\cap H_0^1\II)$ \cite[Theorem 6.14]{Jin:2021book}.
Moreover, the assumption  $AF \ge 0$ a.e. in $\Omega \times[0,T]$ and
the maximum principle (cf. \cite{LuchkoYamamoto:2017}) imply
$Aw = v\ge 0$  a.e. in $\Omega \times[0,T]$.
This and the assumption $F\le 0$ in $\Omega\times[0,T]$ imply
 \begin{equation*}
   \partial_t^\alpha u(t) = \partial_t^\alpha w(t)   =  F(t)-b(t) A w(t)\le 0\qquad  \text{a.e. in} ~~ \Omega \times(0,T].
\end{equation*}
Next, let the auxiliary function $\uu$ be defined by
 \begin{equation*}
   \partial_t^\alpha \uu(t) + b(0) A \uu(t)  =  f(t), \quad \forall t\in(0,T],\quad \text{with}~~\uu(0) = u_0.
\end{equation*}
Let $\phi= \uu-u$. Then $\phi$ satisfied for all $ t\in(0,T]$
 \begin{equation*}
   \partial_t^\alpha \phi(t) + b(0) A \phi(t)  = (b(t) -b(0))Au(t) = (b(t) -b(0))(Aw(t) + Au_0).
\end{equation*}
with $\phi(0) = 0$. Since $b(t)\le b(0)$ and $Au_0, Aw \ge 0$, we apply the maximum principle
 (see \cite{LuchkoYamamoto:2017})  again to derive $\phi\le 0$ in $\Omega\times(0,T)$, i.e. $\uu\le u$
in $\Omega\times (0,T)$. Therefore, there holds
 \begin{align*}
   q(x) |\nabla u(x,t)|^2 + (f(x,t)-\partial_t^\alpha u(x,t))u(x,t)
&\ge  (c_0,c_f) \min(|\nabla u(x,t)|^2,  \uu(x,t)).
\end{align*}
Finally, repeating the argument for Proposition \ref{prop:beta2-parab} on the function
$\uu$ leads to the $\beta$-positivity condition  with $\beta = 2$.
\end{proof}

\section{Regularized problem and the numerical approximation}\label{sec:reg}
In this section, we propose the continuous formulation of the reconstruction approach
based on the regularized output least-squares method and develop a fully discrete scheme for
practical implementation. The error analysis of the discrete approximations is given
in Section \ref{sec:err}.

\subsection{Output least-square formulation}
To recover the diffusion coefficient $q(x,t)$, we employ an output
least-squares formulation with an $H^1(\Omega\times(0,T))$ seminorm penalty (with the notation $\nabla_{x,t}$
denoting the space and time gradient):
\begin{equation}\label{eqn:ob}
    \min_{q \in \Uad} J_\gamma(q;z^\delta)=\tfrac12   \|u( q) - z^\delta\|_{L^2(0,T;L^2(\Omega))}^2
    + \tfrac{\gamma}{2}\|\nabla_{x,t} q \|_{L^2(0,T; L^2(\Omega)) }^2,
\end{equation}
with $u(q)$ satisfying $u(q)(0)=u_0$
\begin{equation}\label{eqn:var}
   (\Dal u(q)(t),\phi)+(q(t)\nabla u(q)(t),\nabla \phi) = (f,\phi),\quad \forall  \phi\in H_0^1(\Omega),\ t\in(0,T].
\end{equation}
The admissible set $\Uad$ for $q(x,t)$ is given in \eqref{eqn:Admset}. The scalar $\gamma>0$ is the
regularization parameter, controlling the strength of the penalty \cite{EnglHankeNeubauer:1996,ItoJin:2015}.
The $H^1(\Omega\times(0,T))$ seminorm penalty is suitable for recovering a spatially-temporally smooth
diffusion coefficient, and it is essential for the error analysis in Section \ref{sec:err}. With this penalty
term, the numerically recovered  diffusion coefficient admits a uniformly bounded (space and time) gradient
in the $L^2(0,T;L^2(\Omega))$ norm, dependent of the regularization parameter $\gamma$ (cf. Lemma \ref{lem:err-2}),
which is needed in the proof of Theorem \ref{thm:error-q}. The dependence of the functional $J_\gamma$
on $z^\delta$ will be suppressed whenever there is no confusion. To ensure the well-posedness of problem
\eqref{eqn:ob}--\eqref{eqn:var}, we make the following assumption on the given problem data.

\begin{assumption}\label{ass:data1}
$u_0\in L^2(\Omega) $, and $f\in L^2(0,T;H^{-1}(\Omega))$.
\end{assumption}

Note that Assumption \ref{ass:data1} and the condition $q\in\mathcal{A}$ in the regularized formulation
are weaker than that in Theorem \ref{thm:weak-sol}. Nonetheless, problem \eqref{eqn:var} does
has a unique weak solution $u$, which can be proved using the standard Galerkin procedure, where $_0I_t^\beta$
denotes the Riemann-Liouville fractional integral of order $\beta$. For a detailed
proof, see, e.g., \cite[Chapter 4]{KubicaYamamoto:2020} and \cite[Section 6.1]{Jin:2021book}.

\begin{lemma}\label{lem:exist-weak}
For any $q\in \Uad$, under Assumption \ref{ass:data1}, problem \eqref{eqn:var} has a unique weak solution
$u(q) \in L^2(0,T;H_0^1\II)$ with $_0I_t^{1-\alpha}(u - u_0) \in {_0H^1}(0,T; H^{-1}\II)$ and
\begin{equation*}
 \|  u(q) \|_{L^2(0,T;H_0^{1}\II)} \le c (\| u_0 \|_{L^2(\Omega)} + \| f  \|_{L^2(0,T; H^{-1}\II)}).
\end{equation*}
\end{lemma}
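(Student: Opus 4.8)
The plan is to construct the solution by the standard Galerkin method and to extract the \textit{a priori} bound from a fractional energy inequality. First I would fix the $L^2(\Omega)$-orthonormal basis $\{\phi_j\}_{j\ge1}$ of Dirichlet eigenfunctions of $-\Delta$, which is simultaneously orthogonal in $H_0^1(\Omega)$, and seek a semidiscrete approximation $u_N(t)=\sum_{j=1}^N c_j(t)\phi_j$ solving
\begin{equation*}
  (\partial_t^\alpha u_N(t),\phi_j) + (q(t)\nabla u_N(t),\nabla\phi_j) = (f(t),\phi_j),\quad j=1,\dots,N,
\end{equation*}
with $u_N(0)=P_N u_0$, the $L^2(\Omega)$-projection of $u_0$. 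This is a linear system of fractional ODEs for the coefficient vector $(c_j(t))$ with a bounded, measurable, time-dependent coefficient matrix $[(q(t)\nabla\phi_i,\nabla\phi_j)]$, and its unique solvability on $(0,T)$ follows from standard fractional ODE theory.

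Second, I would derive bounds uniform in $N$. Testing the semidiscrete equation with $\phi=u_N(t)$ gives
\begin{equation*}
  (\partial_t^\alpha u_N(t),u_N(t)) + (q(t)\nabla u_N(t),\nabla u_N(t)) = (f(t),u_N(t)).
\end{equation*}
The lower bound $q\ge c_0$ from $\Uad$ yields $(q(t)\nabla u_N,\nabla u_N)\ge c_0\|\nabla u_N\|_{L^2(\Omega)}^2$, and Young's inequality bounds the right-hand side by $\tfrac{c_0}{2}\|\nabla u_N\|_{L^2(\Omega)}^2+\tfrac{1}{2c_0}\|f\|_{H^{-1}(\Omega)}^2$. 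Invoking the fractional energy inequality $(\partial_t^\alpha v,v)\ge\tfrac12\partial_t^\alpha\|v\|_{L^2(\Omega)}^2$ and then integrating the resulting differential inequality in $t$ over $(0,T)$ — where the Caputo term contributes a nonnegative boundary term plus a controlled multiple of $\|u_N(0)\|_{L^2(\Omega)}^2$ — produces the uniform estimate $\|u_N\|_{L^2(0,T;H_0^1(\Omega))}\le c(\|u_0\|_{L^2(\Omega)}+\|f\|_{L^2(0,T;H^{-1}(\Omega))})$. Rewriting the equation as $\partial_t[{}_0I_t^{1-\alpha}(u_N-P_Nu_0)]=f+\nabla\cdot(q\nabla u_N)$ in $H^{-1}(\Omega)$ and inserting the bound just obtained shows that $_0I_t^{1-\alpha}(u_N-P_Nu_0)$ is bounded in $_0H^1(0,T;H^{-1}(\Omega))$.

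Third, I would pass to the limit $N\to\infty$. By weak compactness I would extract a subsequence with $u_N\rightharpoonup u$ in $L^2(0,T;H_0^1(\Omega))$ and $_0I_t^{1-\alpha}(u_N-P_Nu_0)\rightharpoonup{}_0I_t^{1-\alpha}(u-u_0)$ in $_0H^1(0,T;H^{-1}(\Omega))$. Since $q$ is fixed and bounded, $q(t)\nabla\phi\in L^2(0,T;L^2(\Omega))$ for each fixed $\phi$, so the bilinear term passes to the limit against the weak convergence of $\nabla u_N$, while the remaining terms are linear and pass directly; this identifies $u$ as a weak solution of \eqref{eqn:var}, and the stated bound is inherited by weak lower semicontinuity of the norm. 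Uniqueness then follows from linearity: the difference $w$ of two weak solutions with identical data solves the homogeneous problem, and the same energy argument with $f=0$, $u_0=0$ forces $\|w\|_{L^2(0,T;H_0^1(\Omega))}=0$.

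The main obstacle is the fractional energy step. Unlike the classical case, $(\partial_t^\alpha v,v)$ is not a total time derivative, so I would rely on the Alikhanov/Vergara--Zacher type inequality $(\partial_t^\alpha v,v)\ge\tfrac12\partial_t^\alpha\|v\|_{L^2(\Omega)}^2$ together with the positivity of the Riemann--Liouville integral to convert the pointwise-in-time identity into a usable bound. The time dependence of $q$ is precisely what rules out the spectral diagonalization available for stationary coefficients and forces the Galerkin-plus-energy route; keeping the constants in the energy estimate independent of $N$ while tracking the boundary contribution of the Caputo term is the delicate point of the argument.
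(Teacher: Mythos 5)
Your proposal is correct and follows exactly the route the paper itself indicates: the paper gives no proof of Lemma \ref{lem:exist-weak} but states that it "can be proved using the standard Galerkin procedure" (citing Kubica--Ryszewska--Yamamoto and Jin's book), and your Galerkin construction with the Alikhanov/Vergara--Zacher inequality $(\partial_t^\alpha v,v)\ge\tfrac12\partial_t^\alpha\|v\|_{L^2(\Omega)}^2$, the time-integrated energy bound, and weak-compactness passage to the limit is precisely that standard argument.
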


The following continuity result for the forward map $u(q)$ is useful.
\begin{lemma}\label{lem:conti-p2s}
Let Assumption \ref{ass:data1} be fulfilled, and the sequence $\{q^n\}\subset \Uad$ converge to $q\in \Uad$ in $L^1(\Omega\times(0,T))$ and a.e., and let $u(q^n)$ and $u(q)$ solve
problem \eqref{eqn:var} with the diffusion coefficients $q^n$ and $q$, respectively. Then
\begin{equation*}
  \lim_{n\to\infty} \|u(q)-u(q^n)\|_{L^2(0,T;H^1(\Omega))} = 0.
\end{equation*}
\end{lemma}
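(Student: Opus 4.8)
The plan is to establish strong convergence directly via an energy estimate on the difference $e^n := u(q^n) - u(q)$, rather than routing through a weak--compactness argument, which would only deliver weak convergence in $L^2(0,T;H_0^1\II)$ and then require an extra step to upgrade. The starting point is the uniform \emph{a priori} bound of Lemma \ref{lem:exist-weak}: since every $q^n$ and the limit $q$ lie in the box-constrained set $\Uad$, the bound $\|u(q^n)\|_{L^2(0,T;H_0^1\II)}\le c(\|u_0\|_{L^2\II}+\|f\|_{L^2(0,T;H^{-1}\II)})$ holds with $c$ independent of $n$, and likewise for $u(q)$. Because $u(q^n)$ and $u(q)$ share the initial data $u_0$, the difference satisfies $e^n\in L^2(0,T;H_0^1\II)$ with $e^n(0)=0$, and the $u_0$-contributions to $_0I_t^{1-\al}(\cdot-u_0)$ cancel, so $_0I_t^{1-\al}e^n\in {_0H^1}(0,T;H^{-1}\II)$ and $\partial_t^\al e^n\in L^2(0,T;H^{-1}\II)$. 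In particular $e^n(t)$ is an admissible test function in \eqref{eqn:var} for a.e.\ $t$.

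First I would subtract the two weak formulations. Using the splitting $q^n\nabla u(q^n)-q\nabla u(q) = q^n\nabla e^n + (q^n-q)\nabla u(q)$, the difference obeys, for every $\phi\in H_0^1\II$ and a.e.\ $t\in(0,T]$,
\begin{equation*}
(\partial_t^\al e^n(t),\phi) + (q^n(t)\nabla e^n(t),\nabla\phi) = -((q^n-q)(t)\nabla u(q)(t),\nabla\phi).
\end{equation*}
Next I would take $\phi=e^n(t)$ and integrate over $(0,T)$. The coefficient term is coercive, $(q^n\nabla e^n,\nabla e^n)\ge c_0\|\nabla e^n\|_{L^2\II}^2$ by the lower bound in $\Uad$, while the right-hand side is controlled by the Cauchy--Schwarz inequality by $\|(q^n-q)\nabla u(q)\|_{L^2(0,T;L^2\II)}\,\|\nabla e^n\|_{L^2(0,T;L^2\II)}$.

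The fractional-specific ingredient, and the step I expect to be the main obstacle, is the positivity of the time-fractional term. Since $e^n(0)=0$, the Djrbashian--Caputo derivative coincides with the Riemann--Liouville derivative, and the pointwise coercivity inequality $(\partial_t^\al e^n,e^n)\ge\tfrac12\partial_t^\al\|e^n\|_{L^2\II}^2$ (a consequence of the positive-definiteness of the kernel $t^{\al-1}$) gives, upon integration, $\int_0^T(\partial_t^\al e^n(t),e^n(t))\,\d t\ge \tfrac12\,{_0I_T^{1-\al}}[\|e^n\|_{L^2\II}^2](T)\ge0$. Dropping this nonnegative term yields $c_0\|\nabla e^n\|_{L^2(0,T;L^2\II)}^2\le \|(q^n-q)\nabla u(q)\|_{L^2(0,T;L^2\II)}\,\|\nabla e^n\|_{L^2(0,T;L^2\II)}$, hence $c_0\|\nabla e^n\|_{L^2(0,T;L^2\II)}\le\|(q^n-q)\nabla u(q)\|_{L^2(0,T;L^2\II)}$. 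Justifying this coercivity at exactly the available regularity (using the zero initial trace of $_0I_t^{1-\al}e^n$ recorded in Lemma \ref{lem:exist-weak}) and the legitimacy of testing with the time-dependent $e^n$ are the only genuinely delicate points.

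Finally I would show the right-hand side vanishes. Since $q^n\to q$ a.e.\ and every coefficient lies in $[c_0,c_1]$, the integrand obeys $|(q^n-q)\nabla u(q)|\le (c_1-c_0)|\nabla u(q)|\in L^2(0,T;L^2\II)$ and tends to $0$ a.e., so Lebesgue's dominated convergence theorem gives $\|(q^n-q)\nabla u(q)\|_{L^2(0,T;L^2\II)}\to0$; note this uses only the a.e.\ convergence together with the box constraint, not the $L^1$ convergence. Consequently $\|\nabla e^n\|_{L^2(0,T;L^2\II)}\to0$, and Poincar\'e's inequality (valid since $e^n(t)\in H_0^1\II$) upgrades this to $\|e^n\|_{L^2(0,T;H^1\II)}\to0$, which is the assertion.
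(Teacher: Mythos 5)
Your proposal is correct and follows essentially the same route as the paper: subtract the two weak formulations to get an equation for the difference with source $\nabla\cdot((q^n-q)\nabla u(q))$, bound $\|u(q^n)-u(q)\|_{L^2(0,T;H^1(\Omega))}$ by $c\|(q^n-q)\nabla u(q)\|_{L^2(0,T;L^2(\Omega))}$, and conclude with dominated convergence using the a.e.\ convergence and the box constraint. The only difference is that the paper obtains the stability bound by directly invoking Lemma \ref{lem:exist-weak} (with zero initial data and the divergence-form source in $L^2(0,T;H^{-1}(\Omega))$), whereas you re-derive it by an explicit energy argument, which forces you to justify the fractional coercivity $\int_0^T(\partial_t^\alpha e^n,e^n)\,\d t\ge 0$ at low regularity --- a delicate point you correctly flag, and which is exactly the technicality that citing Lemma \ref{lem:exist-weak} lets one bypass.
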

\begin{proof}
Let $v^n = u(q)-u(q^n)$. Then it satisfies $v^n(0)=0$ and
\begin{equation*}
  \partial_t^\alpha v^n - \nabla\cdot (q^n\nabla v^n) = \nabla\cdot((q-q^n)\nabla u(q)),\quad \forall t\in(0,T].
\end{equation*}
Then by Lemma \ref{lem:exist-weak} and the definition of the $H^{-1}(\Omega)$-norm, we obtain
\begin{align*}
  \|v^n\|_{L^2(0,T;H^1(\Omega))} & \leq c\|\nabla\cdot((q-q^n)\nabla u(q))\|_{L^2(0,T;H^{-1}(\Omega))}
    \leq c\|(q-q^n)\nabla u(q)\|_{L^2(0,T;L^2(\Omega))}.
\end{align*}
Let $\phi^n = |q-q^n|^2  |\nabla u(q)|^2  $, then $\phi^n\rightarrow0$ almost everywhere (a.e.),
since $q^n\to q$ a.e., and further, since $q, q^n \in \mathcal{A}$, we have
$0\leq \phi^n \le 4c_1^2   |\nabla u(q)|^2  \in L^1(0,T;L^1(\Omega)).$ Then, Lebesgue's dominated
convergence theorem \cite[Theorem 1.9]{EvansGariepy:2015} implies
\begin{align*}
&\lim_{n\rightarrow\infty} \|(q-q^n)\nabla u(q)\|_{L^2(0,T;L^2(\Omega))}^2 = \lim_{n\rightarrow\infty} \int_0^T\int_\Omega \phi^n(x,t)\,\d x\d t
 = \int_0^T\int_\Omega  \lim_{n\rightarrow\infty} \phi^n(x,t)\,\d x \d  t = 0,
\end{align*}
which shows the desired estimate.
\end{proof}

Lemma \ref{lem:conti-p2s} implies that the forward map
$q\in H^1((0,T)\times \Omega) \rightarrow u(q) \in L^2(0,T;H^1(\Omega))$
is weakly sequential closed. Then a standard argument \cite[Theorem 1]{SeidmanVogel:1989} leads to the
existence of a minimizer to problem \eqref{eqn:ob}--\eqref{eqn:var}, given in the next theorem.

\begin{theorem}\label{thm:ex}
Under Assumption \ref{ass:data1}, there exists at least one minimizer to problem \eqref{eqn:ob}--\eqref{eqn:var}.
\end{theorem}

Using Lemma \ref{lem:conti-p2s}, the following continuity results follow from a standard
compactness argument \cite{EnglHankeNeubauer:1996,ItoJin:2015}.
\begin{theorem}
Under Assumption \ref{ass:data1}, the following two statements hold.
\begin{itemize}
  \item[$\rm(i)$] Let the sequence $\{z_j\}_{j\geq 1}$ be convergent to $z^*$ in $L^2(0,T;L^2(\Omega))$, and $q_j^*\in\Uad$
  the corresponding minimizer to $J_\gamma(\cdot;z_j)$. Then $\{q_j^*\}_{j\geq1}$
  contains a subsequence convergent to a minimizer of $J_\gamma(\cdot;z^*)$ over $\mathcal{A}$ in $H^1(\Omega\times(0,T))$.
  \item[$\rm(ii)$] Let $\{\delta_j\}_{j\geq1}\subset \mathbb{R}_+$ with $\delta_j\to0$, $\{z^{\delta_j}\}_{j\geq1}
  \subset L^2(0,T;L^2(\Omega))$ be a sequence satisfying $\|z^{\delta_j}-z^*\|_{L^2(0,T;L^2(\Omega))}
  =\delta_j$ for some exact data $z^*$, and $q_j^*$ be a minimizer to $J_{\gamma_j}
  (\cdot;z^{\delta_j})$ over $\mathcal{A}$. If the sequence $\{\gamma_j\}_{j\geq1}\subset\mathbb{R}_+$ satisfies
  $\lim_{j\to\infty}\gamma_j = 0$ and $\lim_{j\to \infty}\frac{\delta_j^2}{\gamma_j} =0$,
 then the sequence $\{q_j^*\}_{j\geq 1}$ contains a convergent subsequence and
      the limit of every convergent subsequence is a minimum-$H^1(\Omega\times(0,T))$ seminorm solution.
\end{itemize}
\end{theorem}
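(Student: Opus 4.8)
The plan is to run the standard Tikhonov-regularization convergence machinery, where the forward-map continuity of Lemma \ref{lem:conti-p2s} supplies stability of the fidelity term and the coercivity of the $H^1(\Omega\times(0,T))$-seminorm penalty over the box-constrained set $\Uad$ supplies compactness. Both parts rest on the same three ingredients, which I would establish first: (a) a uniform bound on the minimizers in $H^1((0,T)\times\Omega)$; (b) an upgrade of weak $H^1$ convergence to strong $L^2((0,T)\times\Omega)$, hence $L^1$, and (along a further subsequence) a.e.\ convergence, via Rellich--Kondrachov on the bounded space-time cylinder; and (c) weak lower semicontinuity of the seminorm penalty together with continuity of the fidelity through Lemma \ref{lem:conti-p2s}.

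For part (i), I would fix $\gamma>0$, pick any reference $\bar q\in\Uad$ with $\nabla_{x,t}\bar q\in L^2$ (e.g.\ a constant), and use minimality $J_\gamma(q_j^*;z_j)\le J_\gamma(\bar q;z_j)$; since $\{z_j\}$ is convergent, hence bounded in $L^2(0,T;L^2\II)$, the right-hand side is uniformly bounded, giving $\sup_j\|\nabla_{x,t}q_j^*\|_{L^2}<\infty$, which with the $L^\infty$ box bound of $\Uad$ yields a uniform $H^1$ bound. Extracting $q_j^*\rightharpoonup q^*$ weakly in $H^1$, Rellich--Kondrachov gives strong $L^2$ and a.e.\ convergence; since $\Uad$ is convex and closed, hence weakly closed, $q^*\in\Uad$. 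Lemma \ref{lem:conti-p2s} then forces $u(q_j^*)\to u(q^*)$ in $L^2(0,T;H^1\II)$, so the fidelity terms converge, while weak lower semicontinuity controls the penalty. Combining these for any $q\in\Uad$,
\[
J_\gamma(q^*;z^*)\le\liminf_j J_\gamma(q_j^*;z_j)\le\limsup_j J_\gamma(q_j^*;z_j)\le\lim_j J_\gamma(q;z_j)=J_\gamma(q;z^*),
\]
where the last equality is continuity of $J_\gamma(q;\cdot)$ in the data for fixed $q$, identifies $q^*$ as a minimizer of $J_\gamma(\cdot;z^*)$.

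For part (ii), I would first note that a minimum-$H^1(\Omega\times(0,T))$ seminorm solution $q^\dag$ (an exact solution $u(q^\dag)=z^*$ of least penalty) exists by the same weak-compactness and lower-semicontinuity argument. Testing minimality of $q_j^*$ against $q^\dag$ and using $\|u(q^\dag)-z^{\delta_j}\|=\delta_j$ gives
\[
\tfrac12\|u(q_j^*)-z^{\delta_j}\|_{L^2(0,T;L^2\II)}^2+\tfrac{\gamma_j}{2}\|\nabla_{x,t}q_j^*\|_{L^2}^2\le\tfrac12\delta_j^2+\tfrac{\gamma_j}{2}\|\nabla_{x,t}q^\dag\|_{L^2}^2.
\]
Dividing by $\gamma_j$ and using $\delta_j^2/\gamma_j\to0$ yields a uniform $H^1$ bound with $\limsup_j\|\nabla_{x,t}q_j^*\|_{L^2}\le\|\nabla_{x,t}q^\dag\|_{L^2}$, while the residual bound together with $\|z^{\delta_j}-z^*\|=\delta_j\to0$ gives $u(q_j^*)\to z^*$ in $L^2(0,T;L^2\II)$. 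Extracting $q_j^*\rightharpoonup q^*\in\Uad$ (strongly in $L^2$, a.e.) and applying Lemma \ref{lem:conti-p2s} identifies $u(q^*)=z^*=u(q^\dag)$, so $q^*$ is an exact solution; weak lower semicontinuity with the $\limsup$ bound then gives $\|\nabla_{x,t}q^*\|_{L^2}\le\|\nabla_{x,t}q^\dag\|_{L^2}$, whence $q^*$ is itself minimum-seminorm.

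I expect the main obstacle to be the compactness/continuity interface: Lemma \ref{lem:conti-p2s} demands $L^1$ and a.e.\ convergence of $q_j^*$, whereas the energy estimates yield only weak $H^1$ convergence directly. The bridge is the compact embedding $H^1((0,T)\times\Omega)\hookrightarrow\hookrightarrow L^2((0,T)\times\Omega)$ on the bounded cylinder, which upgrades weak $H^1$ to strong $L^2$ (hence $L^1$) and, along a subsequence, to a.e.\ convergence. Secondary points to verify carefully are that $\Uad$ is weakly closed in $H^1$ (immediate from convexity and the $L^\infty$ constraint) so the limit stays admissible, and the continuity of $J_\gamma(q;\cdot)$ in the data used in the final inequality of part (i).
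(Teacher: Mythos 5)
Your proposal is correct and is essentially the same argument the paper invokes: the paper does not write out a proof but appeals to exactly this standard Tikhonov compactness machinery (uniform $H^1$ bound from minimality, weak compactness plus Rellich--Kondrachov to get the $L^1$/a.e.\ convergence required by Lemma \ref{lem:conti-p2s}, weak closedness of $\Uad$, and lower semicontinuity of the penalty). The only step you should still spell out is the upgrade from weak to the claimed strong $H^1(\Omega\times(0,T))$ convergence: taking $q=q^*$ (resp.\ $q=q^\dag$) in your chain of inequalities forces convergence of the penalty norms, and norm convergence together with weak convergence in a Hilbert space yields strong convergence.
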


\begin{remark}
Under the $\beta$-positivity condition, the inverse problem has a unique solution, so the minimum-seminorm solution is unique.
Then the standard subsequence argument shows that in (ii), actually the whole sequence converges.
\end{remark}

\subsection{Numerical approximation}
Now we describe the discretization of problem \eqref{eqn:ob}--\eqref{eqn:var}, based on the
Galerkin finite element method (FEM) in space (cf. \cite{Thomee:2006}) and backward Euler
convolution quadrature (CQ) in time due to \cite{Lubich:1986}. First we recall the Galerkin
FEM approximation. Let $\mathcal{T}_h$ be a shape regular quasi-uniform triangulation of the
domain $\Omega$ into $d$-simplexes, denoted by $K$, with a mesh size $h$. Over $\mathcal{T}_h$,
we define continuous piecewise linear finite element spaces $X_h$ and $V_h$, respectively, by
\begin{align*}
  X_h&= \left\{v_h\in H_0^1(\Omega):\ v_h|_K\mbox{ is a linear function}\ \forall\, K \in \mathcal{T}_h\right\},\\
  V_h&= \left\{v_h\in H^1(\Omega):\ v_h|_K  \mbox{ is a linear function}\ \forall\,  K \in \mathcal{T}_h\right\}.
\end{align*}
The spaces $X_h$ and $V_h$ will be employed to approximate the state $u$ and the diffusion
coefficient $q$, respectively. Now we introduce useful operators on the spaces $X_h$ and
$V_h$. We define the $L^2(\Omega)$ projection $P_h:L^2(\Omega)\to X_h$ by
\begin{equation*}
     (P_h v,\chi) =(v,\chi) , \quad \forall v\in L^2(\Omega),\chi\in X_h.
\end{equation*}
It satisfies the following error estimate \cite[p. 32]{Thomee:2006}: for any $s\in[1,2]$
\begin{equation}\label{eqn:proj-L2-error}
  \|P_hv-v\|_{L^2(\Omega)} + h\|\nabla(P_hv-v)\|_{L^2(\Omega)}\leq h^s\|v\|_{H^s(\Omega)},\quad \forall v\in H^s(\Omega)\cap H_0^1(\Omega).
\end{equation}
Let $\mathcal{I}_h$ be the Lagrange interpolation operator associated with the finite element space $V_h$. It satisfies
the following error estimates for $s=1,2$ and $1 \le p\le \infty$ (with $sp>d$) \cite[Theorem 1.103]{ern-guermond}:
\begin{align*}
  \|v-\mathcal{I}_hv\|_{L^p(\Omega)} + h\|v-\mathcal{I}_hv\|_{W^{1,p}(\Omega)} & \leq ch^s \|v\|_{W^{s,p}(\Omega)}, \quad \forall v\in W^{s,p}(\Omega).\label{eqn:int-err-inf}
\end{align*}
Further, for any $q\in \mathcal{A}$, we define a discrete operator $A_h(q(t)):X_h\to X_h$ by
\begin{equation}\label{eqn:Ah}
  (A_h(q(t))v_h,\chi)=(q(t)\nabla v_h,\nabla \chi),\quad \forall v_h,\chi\in X_h.
\end{equation}

Next we describe time discretization. We partition the interval $[0,T]$ uniformly,
with grid points $t_n=n\tau$, $n=0,\ldots,N$, and a time step size $\tau=T/N$. The fully
discrete scheme for problem \eqref{eqn:fde} reads: Given $U_h^0=P_hu_0\in X_h$, find $U_h^n
\in X_h$ such that
\begin{align}\label{eqn:fully-0}
  (\bar \partial_\tau^\alpha (U_h^n-U_h^0),\chi)+(q(t_n) \nabla U_h^n, \nabla \chi)= (f^n,\chi),\quad \forall\chi\in X_h, \,\, n=1,2,\ldots,N,
\end{align}
where {$f^n=f(t_n)$} 
and $\bar\partial_\tau^\alpha \varphi^n$ denotes the
backward Euler CQ approximation (with $\varphi^j=\varphi(t_j)$):
\begin{equation}\label{eqn:CQ-BE}
  \bar\partial_\tau^\alpha \varphi^n = \tau^{-\alpha} \sum_{j=0}^nb_j^{(\alpha)} ,\quad\mbox{ with } (1-\xi)^\alpha=\sum_{j=0}^\infty b_j^{(\alpha)}\xi^j.
\end{equation}
Note that the weights $b_j^{(\alpha)}$ are given explicitly by
$b_j^{(\alpha)} = (-1)^j\frac{\Gamma(\alpha+1)}{\Gamma(\alpha-j+1)\Gamma(j+1)}$, and thus
\begin{equation*}
 b_j^{(\alpha)} = (-1)^j(j!)^{-1}\alpha(\alpha-1)\cdots(\alpha-j+1),\quad j= 1,2,\ldots,
\end{equation*}
from which it can be verified directly that $b_0^{(\alpha)}=1$ and $b_j^{(\alpha)}<0$ for $j\geq 1$.
Using the operator $A_h(q(t_n))$, the fully discrete scheme \eqref{eqn:fully-0}
can be rewritten as
\begin{equation*}
  \bar \partial_\tau^\alpha  (U_h^n-U_h^0) -A_h(q(t_n)) U_h^n = P_hf^n, \quad n=1,2,\ldots,N.
\end{equation*}

{We use extensively the norm $\|\cdot\|_{\ell^p(X)}$, $1\leq p<\infty$,
for a finite sequence $(u^n)_{n=1}^N\subset X$ (for a Banach space $X$ equipped with the norm $\|\cdot\|_X$):
\begin{equation*}
  \|(u^n)_{n=1}^N\|_{\ell^p(X)} = \Big(\tau \|u^n\|_X^p\Big)^\frac1p.
\end{equation*}}
Now we are ready to give the fully discrete scheme for problem \eqref{eqn:ob}--\eqref{eqn:var}.
Let $z_n^\delta=\tau^{-1}\int_{t_{n-1}}^{t_n}z^\delta(t) \d t$.
Then the fully discrete formulation for problem \eqref{eqn:ob}--\eqref{eqn:var} is given by
\begin{align}\label{eqn:ob-disc}
    \min_{q_{h,\tau}\in \Uad_{h,\tau}} J_{\gamma,h,\tau}(q_{h,\tau})&=\tfrac12 \|(U_h^n(q_{h,\tau}) - z_n^\delta)_{n=1}^N\|_{\ell^2(L^2(\Omega))}^2
    \\&+ \tfrac{\gamma}2\big(\|(\nabla q_h^n)_{n=1}^N \|_{\ell^2(L^2(\Omega)) }^2 +\|(\partial_\tau q_h^n)_{n=1}^{N}  \|_{\ell^2(L^2(\Omega)) }\big),\nonumber
\end{align}
subject to $U_h^n\equiv U_h^n(q_{h,\tau})\in X_h$ satisfying $U_h^0=P_hu_0$ and
\begin{align}\label{eqn:fully}
  \bar \partial_\tau^\alpha (U_h^n-U_h^0)+ A_h(q_h^n) U_h^n = P_hf^n, \quad n=1,2,\ldots,N.
\end{align}
The discrete admissible set $\Uad_{h,\tau}$ is taken to be
\begin{equation*}
    \Uad_{h,\tau}=\{ q_{h,\tau}=(q_h^n)_{n=1}^N:q_h^n\in V_h,~c_0\le q_h^n \le c_1,\,\, 1\le n\le N \}.
\end{equation*}
Note that we approximate the conductivity $q$ by a finite element function in space and piecewise
constant function in time, and in the discrete objective function $J_{\gamma,h,\tau}$, we approximate the
first-order time-derivative in the penalty by backward difference. Problem
\eqref{eqn:ob-disc}--\eqref{eqn:fully} is a finite-dimensional nonlinear optimization problem with
PDE and box constraints, and can be solved efficiently, e.g., (projected) conjugate gradient method.
The existence of a discrete minimizer $q_{h,\tau}^*=(q_h^{n*})_{n=1}^N\in \Uad_{h,\tau}$ is direct,
in view of the norm equivalence in finite-dimensional spaces.
\begin{theorem}\label{thm:ex-fully}
Under Assumption \ref{ass:data1}, there exists at least one minimizer $q_{h,\tau}^*\in \Uad_{h,\tau}$ to problem \eqref{eqn:ob-disc}--\eqref{eqn:fully}.
\end{theorem}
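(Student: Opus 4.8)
The plan is to invoke the Weierstrass extreme value theorem: I would show that the discrete cost $J_{\gamma,h,\tau}$ is continuous on the admissible set $\Uad_{h,\tau}$ and that $\Uad_{h,\tau}$ is compact, whence the existence of a minimizer is immediate. Unlike the continuous problem in Theorem \ref{thm:ex}, no weak-closedness of the forward map (Lemma \ref{lem:conti-p2s}) is needed here, since finite-dimensionality collapses everything to the classical argument.

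First, I would establish compactness of $\Uad_{h,\tau}$. Each component $q_h^n$ lies in the finite-dimensional space $V_h$, so the tuple $q_{h,\tau}=(q_h^n)_{n=1}^N$ ranges over the finite-dimensional product $V_h^N$. Because a continuous piecewise linear function attains its extrema at the mesh vertices, the pointwise box constraints $c_0\le q_h^n\le c_1$ are equivalent to the same inequalities on the nodal values; hence $\Uad_{h,\tau}$ is defined by finitely many linear inequalities and is therefore closed and bounded in $V_h^N$. By norm equivalence in finite dimensions it is compact.

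Next, I would verify that the forward map $q_{h,\tau}\mapsto (U_h^n(q_{h,\tau}))_{n=1}^N$ is well-defined and continuous. Inserting the convolution quadrature \eqref{eqn:CQ-BE} into the scheme \eqref{eqn:fully} and isolating the leading term (recall $b_0^{(\alpha)}=1$), the $n$th time level reads
\begin{equation*}
(\tau^{-\alpha} I + A_h(q_h^n)) U_h^n = P_h f^n + \tau^{-\alpha}\Big(U_h^0 - \sum_{j=1}^n b_j^{(\alpha)}(U_h^{n-j}-U_h^0)\Big).
\end{equation*}
Since $q_h^n\ge c_0>0$, the operator $A_h(q_h^n)$ defined in \eqref{eqn:Ah} is symmetric positive semidefinite on $X_h$, so the system matrix $\tau^{-\alpha} I + A_h(q_h^n)$ is symmetric positive definite, hence invertible; this yields unique solvability at each step. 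As $q_h^n\mapsto A_h(q_h^n)$ is linear and matrix inversion is continuous on the set of invertible matrices, a straightforward induction on $n$ shows that $U_h^n$ depends continuously on $(q_h^1,\ldots,q_h^n)$, and thus on $q_{h,\tau}$.

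Finally, I would assemble the pieces. The continuity of the forward map renders the discrete data-fidelity term a continuous function of $q_{h,\tau}$, while the penalty in \eqref{eqn:ob-disc} is manifestly continuous, being a seminorm composed with the linear spatial gradient and backward-difference operators. Hence $J_{\gamma,h,\tau}$ is continuous on the compact set $\Uad_{h,\tau}$ and attains its infimum. I do not foresee any genuine obstacle: the only point requiring care is the continuous dependence of the discrete state on the coefficient, which is elementary once the per-step system is seen to be uniformly invertible.
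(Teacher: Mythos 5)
Your proof is correct and is exactly the argument the paper has in mind: the paper dispenses with the proof in one line (``direct, in view of the norm equivalence in finite-dimensional spaces''), and your write-up simply fleshes out that standard compactness-plus-continuity (Weierstrass) argument, including the correct observation that $\tau^{-\alpha}I+A_h(q_h^n)$ is symmetric positive definite so the discrete forward map is well defined and continuous. No gaps.
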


\section{Error analysis}\label{sec:err}
In this section, we derive an error bound for the approximations $q_{h,\tau}^*\in\Uad_{h,\tau}$ in terms of the
noise level $\delta$, the regularization parameter $\gamma$, and the discretization parameters $h$ and
$\tau$. The delicate interplay between different parameters and limited regularity of the solution and
problem data represent the main challenges in the analysis. The error estimate in Theorem \ref{thm:error-q} involves
the weight involving $q^\dag(t_n)|\nabla u(t_n)  |^2 +( f(t_n) -\partial_t^\alpha u(t_n))u(t_n)$, which
arises naturally in the stability analysis. The proof relies crucially on the choice of the test function
$\varphi^n=\frac{q^\dag(t_n)-q_h^{n*}}{q^\dag}u$, which is inspired by the conditional stability analysis in
Section \ref{sec:stab}, cf. the proofs of Theorems \ref{thm:stab1} and \ref{thm:stab2}.

\begin{assumption}\label{ass:zdelta}
$q^\dag\in C([0,T]; H^2\II)$.
\end{assumption}

\begin{theorem}\label{thm:error-q}
Let $q^\dag$ be the exact diffusion coefficient, $u\equiv u(q^\dag)$ the solution to problem \eqref{eqn:var},
and $q_{h,\tau}^*\in \Uad_{h,\tau}$ the solution to problem \eqref{eqn:ob-disc}--\eqref{eqn:fully}. Then
under Assumptions \ref{ass:data-2} and \ref{ass:zdelta}, there holds
\begin{align*}
 &\tau^2 \sum_{m=1}^N \sum_{n=1}^m \int_\Omega \Big(\frac{q^\dag(t_n)-q_h^{n*}}{q^\dag(t_n)}\Big)^2
  \big(q^\dag(t_n)|\nabla u(t_n)  |^2 +( f(t_n) -\partial_t^\alpha u(t_n))u(t_n)\big)\,\d x\\
 \le& c(h \gamma^{-1}\eta+ h\gamma^{-\frac12} +  h^{-1}\gamma^{-\frac12}\eta + \gamma^{-\frac12}\eta )\eta,
\end{align*}
with $\ell_N= \ln (N+1)$ and
\begin{align*}
\eta=  \begin{cases}
  c  (\tau^{\min(1,\frac12+\alpha)}  + h^2 + \delta+\gamma^{\frac12}),&\alpha\neq1/2;\\
   c (\tau\ell_N^{\frac12}  + h^2 + \delta^2+\gamma^{\frac12}),&\alpha=1/2.
\end{cases}
\end{align*}
\end{theorem}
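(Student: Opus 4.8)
The plan is to transport the integrated stability argument of Theorem~\ref{thm:stab2} to the fully discrete level, using at each time level $t_n$ the test function $\varphi^n=\frac{q^\dag(t_n)-q_h^{n*}}{q^\dag(t_n)}\,u(t_n)$ built from the \emph{exact} continuous state $u=u(q^\dag)$. The algebra leading to the crucial identity \eqref{eqn:crucial-identity} is purely pointwise in $t$ and uses only $-\nabla\cdot(q^\dag(t_n)\nabla u(t_n))=f(t_n)-\partial_t^\alpha u(t_n)$, which holds for the continuous state; hence with $q_1=q^\dag(t_n)$, $q_2=q_h^{n*}$, $u_1=u(t_n)$ it gives
\[
((q^\dag(t_n)-q_h^{n*})\nabla u(t_n),\nabla\varphi^n)=\tfrac12\int_\Omega\Big(\tfrac{q^\dag(t_n)-q_h^{n*}}{q^\dag(t_n)}\Big)^2\big(q^\dag(t_n)|\nabla u(t_n)|^2+(f(t_n)-\partial_t^\alpha u(t_n))u(t_n)\big)\,\d x.
\]
Thus the left-hand side of the theorem equals, up to the factor $\tfrac12$, the double sum $\tau^2\sum_{m=1}^N\sum_{n=1}^m((q^\dag(t_n)-q_h^{n*})\nabla u(t_n),\nabla\varphi^n)$, and everything reduces to bounding this sum.

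To do so I would expand each summand exactly as in \eqref{eqn:identity-1}: split off $q^\dag(t_n)\nabla u(t_n)$ via the continuous weak form \eqref{eqn:var} at $t_n$, and treat $q_h^{n*}\nabla U_h^n$ via the fully discrete scheme \eqref{eqn:fully} tested against $P_h\varphi^n\in X_h$. This produces four families: a state-gradient term $(q_h^{n*}\nabla(u(t_n)-U_h^n),\nabla\varphi^n)$; projection-consistency terms with factor $\varphi^n-P_h\varphi^n$ (which absorb the source contributions, since $f^n=f(t_n)$ forces $(f(t_n),\varphi^n)-(f^n,P_h\varphi^n)=(f(t_n),\varphi^n-P_h\varphi^n)$); a pairing $(\partial_t^\alpha u(t_n),P_h\varphi^n-\varphi^n)$; and the fractional-derivative mismatch $(\bar\partial_\tau^\alpha(U_h^n-U_h^0)-\partial_t^\alpha u(t_n),P_h\varphi^n)$. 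Following the device of Theorem~\ref{thm:stab2}, the double sum of the last family is reorganized by a \emph{discrete summation by parts} in $n$ that transfers $\bar\partial_\tau^\alpha$ onto $\varphi$, yielding a discrete right-sided fractional derivative acting on $\varphi$ and a boundary pairing, the discrete analogues of terms~I and~II in the proof of Theorem~\ref{thm:stab2}; the weighted bound $\|u'(t)\|_{L^2\II}\le ct^{\alpha/2-1}$ supplied by Proposition~\ref{prop:reg-2} is precisely what keeps these reorganized sums summable, exactly as $s^{-\alpha/2-\epsilon}$ did in the continuous proof.

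The scale $\eta$ is read off from optimality of $q_{h,\tau}^*$: comparing $J_{\gamma,h,\tau}(q_{h,\tau}^*)$ against $J_{\gamma,h,\tau}(\Pi_{h,\tau}q^\dag)$ for the pointwise interpolant $\Pi_{h,\tau}q^\dag\in\Uad_{h,\tau}$ (whose penalty is controlled by Assumption~\ref{ass:zdelta} and \eqref{Cond-q}), and inserting the nonsmooth-data error estimates of the appendix for $\|(U_h^n(\Pi_{h,\tau}q^\dag)-u(t_n))_n\|_{\ell^2(L^2\II)}$ (which supply the temporal rate $\tau^{\min(1,1/2+\alpha)}$, degenerating to $\tau\ell_N^{1/2}$ at $\alpha=\tfrac12$, and the spatial rate $h^2$), delivers simultaneously the state bound $\|(U_h^n(q_{h,\tau}^*)-u(t_n))_n\|_{\ell^2(L^2\II)}\le c\eta$ and, since the penalty is then $O(\eta^2/\gamma)$, the coefficient bound $\|\nabla_{x,t}q_{h,\tau}^*\|_{\ell^2(L^2\II)}\le c\gamma^{-1/2}\eta$. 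These two estimates drive the remaining bookkeeping: writing $\nabla\varphi^n=\frac{q^\dag(t_n)-q_h^{n*}}{q^\dag(t_n)}\nabla u(t_n)+\nabla\big(\frac{q^\dag(t_n)-q_h^{n*}}{q^\dag(t_n)}\big)u(t_n)$, the second summand carries $\nabla q_h^{n*}$ and hence a $\gamma^{-1/2}\eta$ factor, the state error contributes the remaining $\eta$, the approximation estimate \eqref{eqn:proj-L2-error} for $P_h\varphi^n$ supplies the $h$ factors, and an inverse inequality $\|\nabla\chi\|_{L^2\II}\le ch^{-1}\|\chi\|_{L^2\II}$ on $X_h$-functions supplies the $h^{-1}$ in the $h^{-1}\gamma^{-1/2}\eta$ contribution. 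Collecting the resulting products $h\gamma^{-1}\eta^2$, $h\gamma^{-1/2}\eta$, $h^{-1}\gamma^{-1/2}\eta^2$ and $\gamma^{-1/2}\eta^2$ gives the stated right-hand side.

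The hardest part is the fractional-derivative mismatch family together with its summation by parts: one must estimate $\bar\partial_\tau^\alpha(U_h^n-U_h^0)-\partial_t^\alpha u(t_n)$ in a norm that survives pairing with $\varphi^n$ and the \emph{double} summation, which is exactly where a discrete fractional maximal $\ell^p$-regularity estimate and the backward-Euler CQ consistency estimates must be combined with the weighted regularity of Proposition~\ref{prop:reg-2}; this interplay is what forces the case split at $\alpha=\tfrac12$, with its logarithmic factor $\ell_N^{1/2}$, in the definition of $\eta$. The secondary difficulty is the genuinely adverse $h^{-1}\gamma^{-1/2}$ prefactor coming from the inverse inequality, whose control hinges on measuring the state error in $L^2\II$ rather than $H^1\II$, so that only a single inverse power of $h$ is spent.
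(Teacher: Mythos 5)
Your proposal follows essentially the same route as the paper's proof: the same discrete test function $\varphi^n=\frac{q^\dag(t_n)-q_h^{n*}}{q^\dag(t_n)}u(t_n)$ and crucial identity, the same splitting via the continuous weak form versus the discrete scheme tested with $P_h\varphi^n$ (your grouping of the projection-consistency and $\partial_t^\alpha u$ terms is just a rearrangement of the paper's term ${\rm I}_1^n$), the same derivation of $\eta$ and of the bound $\|\nabla_{x,t}q_{h,\tau}^*\|\le c\gamma^{-1/2}\eta$ from the minimizing property against the interpolant combined with the appendix's nonsmooth-data and maximal $\ell^p$-regularity estimates, and the same summation-by-parts treatment of the fractional mismatch with the CQ consistency error handled separately. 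The only cosmetic discrepancy is that the paper's Lemma~\ref{lem:CQ} uses the stronger weighted bound $\|u'(t)\|_{L^2(\Omega)}\le ct^{\alpha-1}$ from \eqref{reg-fde-2} (available under Assumption~\ref{ass:data-2}) rather than the $t^{\alpha/2-1}$ bound of Proposition~\ref{prop:reg-2}, but either suffices for summability.
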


The proof of Theorem \ref{thm:error-q} is technical and lengthy, and requires several technical estimates, especially
nonstandard nonsmooth data estimates for the discrete scheme for problem \eqref{eqn:fde}. Due to the time-dependence of
the elliptic operator $A(t)$, the requisite estimates are still unavailable, and we develop them in Section
\ref{ssec:techest} in the appendix.

\subsection{Basic estimates}\label{ssec:basic}

The analysis requires two basic estimates (which in turn depend on nonsmooth data estimates in Section
\ref{ssec:techest}). The first result gives an \textit{a priori} bound on $\nabla q_{h,\tau}^*$ and
$\partial_\tau q_{h,\tau}^*$ of the discrete minimizer $q_{h,\tau}^*$ and an error bound on the state approximation
$U_h^n(q_{h,\tau}^*)$. This result will play a crucial role in the proof of Theorem \ref{thm:error-q}.
\begin{lemma}\label{lem:err-2}
Let $q^\dag$ be the exact coefficient and $u\equiv u(q^\dag)$ the solution to problem \eqref{eqn:var}.
Let $q_{h,\tau}^*\in \Uad_{h,\tau}$ be the solution to problem \eqref{eqn:ob-disc}--\eqref{eqn:fully}, and
$\{U_h^n(q_{h,\tau}^*)\}_{n=1}^N$ the fully discrete solution to problem \eqref{eqn:fully}. Then under
Assumptions \ref{ass:data-2} and \ref{ass:zdelta}, with $\ell_N=\ln(N+1)$, there holds
\begin{align*}
   \| (U_h^n(q_{h,\tau}^*) - u(t_n))_{n=1}^N \|_{\ell^2(L^2(\Omega))}^2 + \gamma \|& (\nabla q_{h,\tau}^{n*})_{n=1}^N\|_{\ell^2(L^2(\Omega))}^2
  +  \gamma \| (\partial_\tau  q_{h,\tau}^{n*})_{n=1}^N \|_{\ell^2(L^2(\Omega))}^2\nonumber\\
 &  \le  \begin{cases}
  c  (\tau^{\min(2,1+2\alpha)}  + h^4 + \delta^2+\gamma),&\alpha\neq1/2;\\
   c (\tau^2\ell_N  + h^4 + \delta^2+\gamma),&\alpha=1/2.
\end{cases}
\end{align*}
\end{lemma}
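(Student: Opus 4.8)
The strategy is a quasi-optimality/comparison argument adapted to the fully discrete, time-dependent setting. Since $q^\dag\in\mathcal{A}$ and the nodal interpolant of a continuous piecewise linear element preserves the bounds $c_0\le\cdot\le c_1$, the sequence $\tilde q_{h,\tau}:=(\mathcal{I}_h q^\dag(t_n))_{n=1}^N$ lies in $\Uad_{h,\tau}$, so the minimizing property of $q_{h,\tau}^*$ gives $J_{\gamma,h,\tau}(q_{h,\tau}^*)\le J_{\gamma,h,\tau}(\tilde q_{h,\tau})$. Writing $e^n:=U_h^n(q_{h,\tau}^*)-u(t_n)$ and $d^n:=u(t_n)-z_n^\delta$, I would use the elementary inequality $\|a+b\|^2\ge\tfrac12\|a\|^2-\|b\|^2$ on the data misfit at $q_{h,\tau}^*$ (whose integrand is $e^n+d^n$) to bound the left objective from below by $\tfrac14\|(e^n)_{n=1}^N\|_{\ell^2(L^2)}^2+\tfrac{\gamma}{2}(\text{penalty at }q_{h,\tau}^*)-\tfrac12\|(d^n)_{n=1}^N\|_{\ell^2(L^2)}^2$, and bound the right objective from above by $\|(U_h^n(\tilde q_{h,\tau})-u(t_n))_{n=1}^N\|_{\ell^2(L^2)}^2+\|(d^n)_{n=1}^N\|_{\ell^2(L^2)}^2+\tfrac{\gamma}{2}(\text{penalty at }\tilde q_{h,\tau})$. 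This reduces the lemma to estimating three quantities.

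First, the penalty at $\tilde q_{h,\tau}$ is $O(1)$: by $H^2$-stability of $\mathcal{I}_h$ and Assumption~\ref{ass:zdelta}, $\|(\nabla\mathcal{I}_h q^\dag(t_n))_{n=1}^N\|_{\ell^2(L^2)}^2\le c\,T\sup_t\|q^\dag(t)\|_{H^2}^2\le c$, while $\partial_\tau\mathcal{I}_h q^\dag(t_n)=\mathcal{I}_h\big(\tau^{-1}\!\int_{t_{n-1}}^{t_n}\partial_t q^\dag\,\d s\big)$ together with $|\partial_t q^\dag|,|\nabla_x\partial_t q^\dag|\le M$ from \eqref{Cond-q} gives $\|(\partial_\tau\mathcal{I}_h q^\dag(t_n))_{n=1}^N\|_{\ell^2(L^2)}^2\le c$; hence $\tfrac{\gamma}{2}(\text{penalty at }\tilde q_{h,\tau})\le c\gamma$. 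Second, for the data term I would split $d^n=(u(t_n)-\tau^{-1}\!\int_{t_{n-1}}^{t_n}u\,\d s)-\tau^{-1}\!\int_{t_{n-1}}^{t_n}\xi\,\d s$. The averaged-noise part contributes exactly $\|\xi\|_{L^2(0,T;L^2)}^2=\delta^2$ by Cauchy--Schwarz, and the temporal-averaging part is controlled by the weighted bound $\|u'(t)\|_{L^2}\le c\,t^{\alpha-1}$ from \eqref{reg-fde-2}: on each interval $\|u(t_n)-\tau^{-1}\!\int_{t_{n-1}}^{t_n}u\,\d s\|_{L^2}\le c\tau\,t_{n-1}^{\alpha-1}$ for $n\ge2$ and $\le c\tau^{\alpha}$ for $n=1$, so that $\tau\sum_n t_{n-1}^{2\alpha-2}\approx\int_\tau^T t^{2\alpha-2}\,\d t$ yields precisely $\tau^{\min(2,1+2\alpha)}$ when $\alpha\ne\tfrac12$ and the borderline $\tau^2\ell_N$ (from $\int_\tau^T t^{-1}\,\d t$) when $\alpha=\tfrac12$.

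The crux is the third quantity, the fully discrete forward error $\|(U_h^n(\tilde q_{h,\tau})-u(t_n))_{n=1}^N\|_{\ell^2(L^2)}^2$, which I expect to be the main obstacle. Because the elliptic operator $A(t;q)$ is genuinely time-dependent, the classical semigroup/convolution-quadrature error machinery does not apply directly, and one must invoke the nonstandard nonsmooth-data estimates for the scheme \eqref{eqn:fully} developed in Section~\ref{ssec:techest}, which rest on a discrete fractional maximal $\ell^p$ regularity together with the perturbation bound \eqref{eqn:perturb} to absorb the coefficient's time-variation. Combined with the interpolation error $\|q^\dag-\mathcal{I}_h q^\dag\|\le c h^2\|q^\dag\|_{H^2}$ feeding the spatial contribution and the same weighted regularity $\|u'(t)\|_{L^2}\le c\,t^{\alpha-1}$ governing the CQ error near $t=0$, these estimates give $\|(U_h^n(\tilde q_{h,\tau})-u(t_n))_{n=1}^N\|_{\ell^2(L^2)}^2\le c(h^4+\tau^{\min(2,1+2\alpha)})$ (resp. $c(h^4+\tau^2\ell_N)$ at $\alpha=\tfrac12$). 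Inserting the three bounds and absorbing the absolute constants $\tfrac14$ and $\tfrac12$ then delivers the stated estimate.
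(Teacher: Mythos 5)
Your proposal is correct and follows essentially the same route as the paper's proof: the quasi-optimality comparison with $\hat q_{h,\tau}=(\mathcal{I}_hq^\dag(t_n))_{n=1}^N\in\Uad_{h,\tau}$, the splitting of $u(t_n)-z_n^\delta$ into a temporal-averaging error (controlled via $\|u'(t)\|_{L^2(\Omega)}\le ct^{\alpha-1}$, yielding $\tau^{\min(2,1+2\alpha)}$ resp.\ $\tau^2\ell_N$) plus an averaged-noise term of size $\delta$, the $O(1)$ bound on the penalty of the interpolant, and the delegation of the forward error $\|(U_h^n(\mathcal{I}_hq^\dag)-u(t_n))_{n=1}^N\|_{\ell^2(L^2(\Omega))}^2$ to the nonsmooth-data/maximal $\ell^p$ regularity machinery of the appendix (Lemma \ref{lem:err-1}). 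The only cosmetic difference is your use of $\|a+b\|^2\ge\tfrac12\|a\|^2-\|b\|^2$ where the paper applies the triangle inequality directly, which is equivalent.
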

\begin{proof}
First we bound $\|(u(t_n) - z_n^\delta)_{n=1}^N\|_{\ell^2(L^2(\Omega))}^2$. Under the given assumption,
we have the \textit{a priori} regularity $u\in C([0,T];L^2(\Omega))$, and thus $u(t_n)$ is well defined.
Let $u_n = \tau^{-1}\int_{t_{n-1}}^{t_n} u(t)\, \d t$. Then $u(t_n)-u_n = \tau^{-1}\int_{t_{n-1}}^{t_n}
\int_t^{t_n}u'(s)\d s\d t$. For $n\ge 2$, the regularity estimate \eqref{reg-fde-2} implies
\begin{align*}
  \|u(t_n)-u_n\|_{L^2(\Omega)} &\leq \tau^{-1}\int_{t_{n-1}}^{t_n}\int_t^{t_n} \|u'(s)\|_{L^2(\Omega)}\d s\d t
   \leq \tau^{-1}\int_{t_{n-1}}^{t_n}\int_t^{t_n}s^{\alpha-1}\d s\d t \le c \tau t_{n-1}^{\alpha-1}.
\end{align*}
Similarly, we have  $\|u(\tau)-u_1\|_{L^2(\Omega)}\le c\tau^\alpha$. Consequently, we deduce
\begin{align*}
  \|(u(t_n) - u_n)_{n=1}^N \|_{\ell^2(L^2\II)}^2 \le c\Big( \tau^{1+2\alpha} + \tau  \sum_{n=2}^N \tau^2 t_n^{2\alpha-2} \Big)
 \le \begin{cases}
 c \tau^{\min(2,1+2\alpha)} ,&\alpha\neq1/2;\\
 c \tau^2\ell_N ,&\alpha=1/2.
\end{cases}
\end{align*}
Meanwhile, by the Cauchy-Schwarz inequality, $\tau|u_n|^2\leq \int_{t_{n-1}}^{t_n}u(t)^2\d t$. This, the definition
of $\delta$ and the stability estimate $\|(u_n - z_n^\delta)_{n=1}^N\|_{\ell^2(L^2(\Omega))} \le
\|u(t) - z^\delta(t)\|_{L^2(0,T;L^2(\Omega))}\leq \delta$ imply
\begin{equation}\label{eqn:err-time}
  \|(u(t_n) - z_n^\delta)_{n=1}^N\|_{\ell^2(L^2(\Omega))}^2 \leq \begin{cases}
 c \tau^{\min(2,1+2\alpha)} +\delta^2,&\alpha\neq1/2;\\
 c \tau^2\ell_N + \delta^2 ,&\alpha=1/2.
\end{cases}
\end{equation}
Next by the minimizing property of $q_{h,\tau}^*\in \Uad_{h,\tau}$ and $\hat q_{h,\tau} =
(\mathcal{I}_h q^\dag(t_n))_{n=1}^N \in \Uad_{h,\tau}$, we deduce
$$J_{\gamma,h,\tau}(q_{h,\tau}^*)\leq J_{\gamma,h,\tau}(\hat q_{h,\tau}).$$
By the triangle inequality, we derive
\begin{equation*}
\begin{aligned}
\|(U_h^n(q_{h,\tau}^*) -  u(t_n))_{n=1}^N\|_{\ell^2(L^2(\Omega))}^2
 \le  c \|(U_h^n(q_{h,\tau}^*) - z^\delta_n)_{n=1}^N\|_{\ell^2(L^2(\Omega))}^2
  + c  \| (z^\delta_n-u(t_n))_{n=1}^N \|_{\ell^2(L^2(\Omega))}^2 .
\end{aligned}
\end{equation*}
The preceding two inequalities imply
\begin{align*}
   & \| (U_h^n(q_{h,\tau}^*) - u(t_n))_{n=1}^N \|_{\ell^2(L^2(\Omega))}^2 + \gamma \|(\nabla q_h^{n*})_{n=1}^N\|_{\ell^2(L^2(\Omega))}^2
  +  \gamma \|(\bar \partial_\tau q_{h}^{n*})_{n=1}^N\|_{\ell^2(L^2(\Omega))}^2 \\
  \leq & c\|(U_h^n(\mathcal{I}_h q^\dag) - z^\delta_n)_{n=1}^N\|_{\ell^2(L^2(\Omega))}^2 +
  c \gamma \|(\nabla \mathcal{I}_h q^\dag(t_n))_{n=1}^N\|_{\ell^2(L^2(\Omega))}^2\\
    &+  c\gamma \|(\bar \partial_\tau  \mathcal{I}_h q^\dag(t_n))_{n=1}^N\|_{\ell^2(L^2(\Omega))}^2
   + c  \| (z^\delta_n-u(t_n))_{n=1}^N \|_{\ell^2(L^2(\Omega))}^2.
\end{align*}
Since $|\nabla q^\dag| + |\partial_t q^\dag |\le c$ by Assumption
\ref{ass:data-2}, the property of the interpolation operator $\mathcal{I}_h$ implies
$$\|(\nabla \mathcal{I}_hq^\dag(t_n))_{n=1}^N\|_{\ell^2(L^2(\Omega))}^2 + \|(\bar \partial_\tau \mathcal{I}_h q^\dag(t_n))_{n=1}^N\|_{\ell^2(L^2(\Omega))}^2 \leq c.$$
Meanwhile, by the triangle inequality and Lemma \ref{lem:err-1}, we deduce
\begin{align*}
 \|U_h^n(\mathcal{I}_hq^\dag) - z^\delta_n\|_{L^2(\Omega)}^2 &\le 2\|U_h^n(\mathcal{I}_hq^\dag) - u(t_n)\|_{L^2(\Omega)}^2+2\|u(t_n) - z^\delta_n\|_{L^2(\Omega)}^2\\
& \le c(\tau t_n^{\alpha-1} + h^2)^2 + c\|u(t_n) - z^\delta_n\|_{L^2(\Omega)}^2,
\end{align*}
Consequently, combining the preceding estimates with \eqref{eqn:err-time} we derive
\begin{align*}
  \|  ( U_h^n(\mathcal{I}_hq^\dag) - z^\delta_n ) _{n=1}^N \|_{\ell^2(L^2(\Omega))}^2
   &\le { \begin{cases}
  c  (\tau^{\min(2,1+2\alpha)}  + h^4 + \delta^2+\gamma),&\alpha\neq1/2;\\
   c (\tau^2\ell_N  + h^4 + \delta^2+\gamma),&\alpha=1/2.
\end{cases}}
\end{align*}
This completes the proof of the lemma.
\end{proof}

Next we give a bound on the backward Euler CQ approximation of the discrete test function $\varphi$.
\begin{lemma}\label{lem:CQ}
Let $q^\dag$ be the exact coefficient, and $u\equiv u(q^\dag)$ the solution to problem \eqref{eqn:fde}.
Then for $\fy^m=\frac{q^\dag(t_m)-q_h^{m*}}{q^\dag (t_m)}u(t_m)$, there hold for $1\le j\le N$
\begin{align*}
 \tau \sum_{m=j}^N \| \tau^{-\alpha} \sum_{n=j}^m b_{n-j}^{(\alpha)} P_h(\fy^n - \fy^m) \|_{L^2(\Omega)}^2
\le& \begin{cases}
  c  \gamma^{-1} (\tau^{\min(2,1+2\alpha)}  + h^4 + \delta^2+\gamma),&\alpha\neq1/2;\\
   c \gamma^{-1} (\tau^2\ell_N  + h^4 + \delta^2+\gamma),&\alpha=1/2.
\end{cases}
\end{align*}
\end{lemma}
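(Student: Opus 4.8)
The plan is to turn the reversed convolution quadrature sum into an ordinary backward difference of the test function, so that it can be fed by the \emph{a priori} control of $\bar\partial_\tau q_{h,\tau}^*$ supplied by Lemma~\ref{lem:err-2}. Writing $\psi^n=P_h\fy^n$, using $b_0^{(\alpha)}=1$ and the telescoping $\psi^n-\psi^m=-\sum_{\ell=n+1}^m(\psi^\ell-\psi^{\ell-1})$, a discrete summation by parts yields
\[
\tau^{-\alpha}\sum_{n=j}^m b_{n-j}^{(\alpha)}(\psi^n-\psi^m)=-\tau^{1-\alpha}\sum_{\ell=j+1}^m B_{\ell-j-1}\,\bar\partial_\tau\psi^\ell,\qquad B_p=\sum_{k=0}^p b_k^{(\alpha)}.
\]
Since the $B_p$ are the Taylor coefficients of $(1-\xi)^{\alpha-1}$, they are nonnegative and satisfy $B_p\le c(1+p)^{-\alpha}$. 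This identity exhibits the left-hand side as a discrete right-sided fractional derivative of $\fy$ anchored at $t_j$, the exact discrete counterpart of the quantity ${_s\partial_t^\alpha}\fy$ treated in the proof of Theorem~\ref{thm:stab2}, and it reduces everything to estimating the triangular sum against $\bar\partial_\tau\fy^\ell$.

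Next I would split $\bar\partial_\tau\fy^\ell$ by the discrete product rule. With $R^\ell=\frac{q^\dag(t_\ell)-q_h^{\ell*}}{q^\dag(t_\ell)}$ one has $\fy^\ell=R^\ell u(t_\ell)$ and $\bar\partial_\tau\fy^\ell=(\bar\partial_\tau R^\ell)u(t_\ell)+R^{\ell-1}\bar\partial_\tau u(t_\ell)$. For the coefficient part, $\|u(t_\ell)\|_{L^\infty(\Omega)}\le c$ by \eqref{reg-fde-inf}, and since $q^\dag$ is Lipschitz in time by Assumption~\ref{ass:data-2} with $q^\dag\ge c_0$, one gets $\|\bar\partial_\tau R^\ell\|_{L^2(\Omega)}\le c(\|\bar\partial_\tau q_h^{\ell*}\|_{L^2(\Omega)}+1)$. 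Its $\ell^2(L^2(\Omega))$ norm is thus controlled by $\gamma^{-1/2}$ times the square root of the right-hand side of Lemma~\ref{lem:err-2}; squaring generates precisely the $\gamma^{-1}$ prefactor and the temporal rate $\tau^{\min(2,1+2\alpha)}$ (and the factor $\ell_N$ at $\alpha=\tfrac12$) recorded in the statement.

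For the state part I would use the weighted smoothing estimate from \eqref{reg-fde-2}, namely $\|\bar\partial_\tau u(t_\ell)\|_{L^2(\Omega)}\le c\,t_{\ell-1}^{\alpha-1}$, together with $\|R^{\ell-1}\|_{L^\infty(\Omega)}\le c$, and then compare the resulting sum $\tau^{1-\alpha}\sum_{\ell}B_{\ell-j-1}t_{\ell-1}^{\alpha-1}$ with the integral $\int_{t_j}^{t_m}(s-t_j)^{-\alpha}s^{\alpha-1}\,\d s$, estimated exactly as in Proposition~\ref{prop:reg-2}. This contribution is independent of $\gamma$ and is therefore meant to be absorbed into the $\gamma^{-1}\cdot\gamma$ term of the stated bound; it is the genuinely new, nonsmooth-data ingredient and mirrors the continuous argument behind Theorem~\ref{thm:stab2}.

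I expect the state part to be the main obstacle. Unlike in Theorem~\ref{thm:stab2}, where the singular anchor $s$ is integrated, here the anchor $t_j$ is fixed while the outer sum runs over the endpoint $t_m$, so one cannot simply invoke Schur's or Young's test: the kernel $B_p\sim p^{-\alpha}$ is \emph{not} summable, and the naive operator bound on $\tau$-weighted $\ell^2$ fails (e.g.\ for $\alpha>\tfrac12$). The delicate point is to track the interplay between this non-summable kernel and the boundary singularity $\|u'(t)\|_{L^2(\Omega)}\sim t^{\alpha-1}$ sharply enough to avoid spurious negative powers of $\tau$, exploiting the cancellation already encoded in passing to the partial sums $B_p$ rather than bounding the signed weights $b_k^{(\alpha)}$ termwise. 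It is exactly at the borderline exponent $\alpha=\tfrac12$ that square-summation of the kernel contributes the logarithmic factor $\ell_N=\ln(N+1)$, and establishing the required discrete fractional-integration estimate uniformly in $j$, $m$ and $\tau$ is the heart of the argument.
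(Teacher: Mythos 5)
Your argument is essentially the paper's proof: the summation-by-parts identity with $B_p=\sum_{k\le p}b_k^{(\alpha)}=b_p^{(\alpha-1)}$ is exactly the associativity $\bar\partial_\tau^\alpha=\bar\partial_\tau^{\alpha-1}\bar\partial_\tau$ used there, followed by the same discrete product rule, the \emph{a priori} bound of Lemma \ref{lem:err-2} for the coefficient part, and the weighted estimate $\|\bar\partial_\tau u(t_\ell)\|_{L^2(\Omega)}\le c\,t_{\ell-1}^{\alpha-1}$ for the state part. The obstacle you flag at the end is resolved in the paper precisely by the route you already sketch in your third paragraph: the triangular sum $\tau\sum_{\ell}t_{\ell-j+1}^{-\alpha}\|\bar\partial_\tau u(t_{\ell+1})\|_{L^2(\Omega)}$ is bounded by $c_\epsilon t_j^{-\epsilon}$ \emph{uniformly in $m$} (the discrete analogue of Proposition \ref{prop:reg-2}, via \cite[Lemma 4.6]{JinZhou:2021sicon}), after which the outer sum over $m$ is immediate and no Schur or Young test on the non-summable kernel is required.
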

\begin{proof}
By the associativity of backward Euler CQ, i.e., $\bar\partial_\tau^\alpha\fy^n
=\bar\partial_\tau^{\alpha-1}\bar\partial_\tau \fy^n$, if $\fy^0=0$, then there holds
\begin{align*}
   {\rm I}_m:=\tau^{-\alpha} \sum_{n=j}^m b_{n-j}^{(\alpha)} P_h(\fy^n -\fy^m) & = \tau^{1-\alpha} \sum_{n=j}^{ {m-1}} b_{n-j}^{(\alpha-1)}\tfrac{P_h\fy^n-P_h\fy^{n+1}}{\tau}.
\end{align*}
Thus, the $L^2(\Omega)$-stability of $P_h$ and the definition of $\fy^n$ imply  
\begin{align*}
   &\quad \tau^{-1}\|P_h(\fy^n-\fy^{n+1})\|_{L^2(\Omega)} \leq \tau^{-1}\|\fy^n - \fy^{n+1}\|_{L^2(\Omega)}\\
    \leq & \| u(t_{n+1})\bar \partial_\tau \tfrac{q^\dag(t_{n+1})-q_h^{n+1*}}{q^\dag (t_{n+1})}\|_{L^2(\Omega)}
    + \| \tfrac{q^\dag(t_{n})-q_h^{n*}}{q^\dag (t_{n})}  \bar \partial_\tau  u(t_{n+1})\|_{L^2(\Omega)}\\
 \leq &\| u(t_{n+1}) \|_{L^\infty\II}
 \|\bar \partial_\tau \tfrac{q^\dag(t_{n+1})-q_h^{n+1*}}{q^\dag (t_{n+1})}\|_{L^2(\Omega)}
    + \| \tfrac{q^\dag(t_{n})-q_h^{n*}}{q^\dag (t_{n})} \|_{L^\infty\II} \|\bar \partial_\tau  u(t_{n+1})\|_{L^2(\Omega)}.
\end{align*}
Since Assumption \ref{ass:data-2} holds, we have $\| u(t_{n+1}) \|_{L^\infty\II}\leq c $ and it follows from
$q_{h,\tau}^* ,q\in\mathcal{A} $ that
\begin{align*}
 \|{\rm I}_m\|_{L^2(\Omega)}
 &\leq c\tau^{1-\alpha} \sum_{n=j}^{m-1} |b_{n-j}^{(\alpha-1)}| \big(
 \|\bar \partial_\tau  q_h^{n+1*} \|_{L^2(\Omega)} +
  \|\bar \partial_\tau  q^\dag(t_{n+1}) \|_{L^2(\Omega)}  +  \|\bar \partial_\tau  u(t_{n+1})\|_{L^2(\Omega)} \big)\\
    &\le c\tau  \sum_{n=j}^{m-1} t_{n-j+1}^{-\alpha}
 \|\bar \partial_\tau  q_h^{n+1*} \|_{L^2(\Omega)} + c\tau  \sum_{n=j}^{m-1} t_{n-j+1}^{-\alpha}
    +  \tau  \sum_{n=j}^{m-1} t_{n-j+1}^{-\alpha} \|\bar \partial_\tau  u(t_{n+1})\|_{L^2(\Omega)} .
\end{align*}
where the last step follows from $|b_j^{(\alpha-1)}|\leq c(j+1)^{-\alpha}$ \cite[Exercise 6.16]{Jin:2021book}.
Note that $ c\tau  \sum_{n=j}^{m-1} t_{n-j+1}^{-\alpha}\leq ct_{n-m}^{1-\alpha}\leq c$. Then Young's inequality implies
\begin{align*}
 \tau \sum_{m=j}^N \bigg(\tau  \sum_{n=j}^{m-1} t_{n-j+1}^{-\alpha}
 \|\bar \partial_\tau  q_h^{*n+1} \|_{L^2(\Omega)} \bigg)^2
 &\leq c \bigg(\tau \sum_{n=j}^N   t_{n-j+1}^{-\alpha}   \bigg)^2  \bigg( \tau \sum_{n=j}^N  \|\bar \partial_\tau  q_h^{*n} \|_{L^2(\Omega)}^2  \bigg)\\
 &\le   \begin{cases}
  c  \gamma^{-1} (\tau^{\min(2,1+2\alpha)}  + h^4 + \delta^2+\gamma),&\alpha\neq1/2;\\
   c \gamma^{-1} (\tau^2\ell_N  + h^4 + \delta^2+\gamma),&\alpha=1/2.
\end{cases}
\end{align*}
Meanwhile, the regularity estimate $\|\partial_t u(t)\|_{L^2\II}\le c t^{\alpha-1}$ from \eqref{reg-fde-2}
 and the argument of \cite[Lemma 4.6]{JinZhou:2021sicon} imply
$\tau  \sum_{n=j}^{m-1} t_{n-j+1}^{-\alpha} \|\bar \partial_\tau  u(t_{n+1})\|_{L^2(\Omega)} \le
c_\epsilon t_j^{-\epsilon},$ for any small $\epsilon \in (0,\min(\frac12,1-\alpha))$. Consequently,
\begin{align*}
 \tau \sum_{m=j}^N \bigg(\tau  \sum_{n=j}^{m-1} t_{n-j+1}^{-\alpha}
 \|\bar \partial_\tau  u(t_{n+1})\|_{L^2(\Omega)} \bigg)^2\
 &\leq c \tau \sum_{m=j}^N   t_j^{-2\epsilon} \le c.
\end{align*}
This completes the proof of the lemma.
\end{proof}

\subsection{The convergence rate}
With the basic estimates in Lemmas \ref{lem:err-2} and \ref{lem:CQ}, we can prove
Theorem \ref{thm:error-q}. The proof relies on a novel choice of the test function $\varphi^n$,
directly inspired by the conditional stability analysis in Section \ref{sec:stab}, and
maximal regularity estimates. Hence, it is still lengthy, and is divided into several steps.\medskip

\noindent\textbf{Proof of Theorem \ref{thm:error-q}.}
The proof employs the following identity, analogous to \eqref{eqn:crucial-identity},
\begin{align*}
((q^\dag(t_n)-q_h^{n*})\nabla u(t_n),\nabla\fy^n)
= &\frac{1}{2}\int_\Omega \Big(\frac{q^\dag(t_n)-q_h^{n*}}{q^\dag(t_n)}\Big)^2\big( q^\dag(t_n)|\nabla u(t_n)  |^2 +(f(t_n)-\partial_t^\alpha u(t_n))u(t_n)\big)\,\d x,
\end{align*}
with the test function $\fy^n=\frac{q^\dag(t_n)-q_h^{n*}}{q^\dag(t_n)} u(t_n) \in H_0^1(\Omega).$
By the box constraint of $\Uad$, the assumption $|\nabla q^\dag| \le c$ and the regularity
estimate $\| u(t) \|_{H^2\II} \le c $ from \eqref{reg-fde-2},  we have
\begin{equation}\label{eqn:nablafyn}
  \|\nabla\fy^n\|_{L^2(\Omega)}\le c(1+\|\nabla q_h^{n*}\|_{L^2(\Omega)}),
\end{equation}
Meanwhile, by integration by parts, we have the splitting
 \begin{align*}
 ((q^\dag(t_n)-q_h^{n*})\nabla u(t_n),\nabla\fy^n)&=-(\nabla\cdot((q^\dag(t_n)-q_h^{n*})\nabla u(t_n)), \fy^n-P_h\fy^n)\nonumber\\
    &\quad + (q_h^{n*}\nabla (U_h^n(q_{h,\tau}^*) - u(t_n)),\nabla P_h\fy^n) \label{eqn:sp-01} \\
   & \quad + (q^\dag(t_n)\nabla u(t_n) - q_h^{n*}\nabla U_h^n(q_{h,\tau}^*),\nabla P_h\fy^n) =\sum_{i=1}^3{\rm I}_i^n.\nonumber
\end{align*}
Below we bound the terms separately.\\
\noindent\textbf{Step 1: bound the term ${\rm I}_1^n$.}
Since $q^\dag,q_{h\tau}^*\in \Uad$, $|\nabla q^\dag| \le c$,
and  $\| u(t) \|_{H^2\II} \le c $ from \eqref{reg-fde-2}
and $\| \nabla u(t) \|_{L^\infty\II} \le c $ from \eqref{reg-fde-inf}, we derive
\begin{align*}
  \| \nabla\cdot((q^\dag(t_n)-q_h^{n*})\nabla u(t_n))\|_{L^2(\Omega)}
 \le & \| \nabla q^\dag(t_n)\|_{L^\infty(\Omega)}  \| \nabla u(t_n) \|_{L^2(\Omega)} +\| q^\dag(t_n)-q_h^{n*}\|_{L^\infty(\Omega)}\| \Delta u(t_n) \|_{L^2(\Omega)}\\
   & +  \| \nabla q_h^{n*}\|_{L^2(\Omega)}  \| \nabla u(t_n) \|_{L^\infty(\Omega)}\le c(1 +  \| \nabla q_h^{n*}\|_{L^2(\Omega)}).
\end{align*}
Then the Cauchy-Schwarz inequality and the approximation property of $P_h$  in \eqref{eqn:proj-L2-error} imply
\begin{equation*}
 |{\rm I}_1^n|  \le  c(1+\| \nabla q_h ^{n*}\|_{L^2(\Omega)} ) \|  \fy^n-P_h\fy^n\|_{L^2(\Omega)}\le c h(1+\| \nabla q_h ^{n*}\|_{L^2(\Omega)} ) \| \nabla \fy^n  \|_{L^2(\Omega)}.
\end{equation*}
Thus, we can bound the term ${\rm I}_1^n$ by
\begin{align}
 \tau \sum_{n=1}^N   |{\rm I}_1^n|  \le  c h \tau\sum_{n=1}^N(1+\| \nabla q_h ^{n*}\|_{L^2(\Omega)} )^2
\le & ch + c h  \|(\nabla q_h ^{n*})_{n=1}^N\|_{\ell^2(L^2(\Omega))}  ^2  \le c ( h + h \gamma^{-1}\eta^2).\label{eqn:I1}
\end{align}
\noindent\textbf{Step 2: bound the term ${\rm I}_2^n$.}
For the term ${\rm I}_2^n$, by the triangle inequality, inverse inequality for
functions in $X_h$, the $L^2(\Omega)$ stability of $P_h$ in \eqref{eqn:proj-L2-error}, we deduce
\begin{align*}
 \|  \nabla(u(t_n) - U_h^n(q_{h,\tau}^{*})) \|_{L^2(\Omega)}
  & \leq \|  \nabla(u(t_n) - P_hu(t_n) ) \|_{L^2(\Omega)} + h^{-1}\|  P_h u(t_n)  - U_h^n(q_{h,\tau}^*)  \|_{L^2(\Omega)}\\
  & \leq c(h + h^{-1}\|P_hu(t_n) - U_h^n(q_{h,\tau}^*)   \|_{L^2(\Omega)}).
\end{align*}
Meanwhile, by the standard energy argument \cite[Lemma 3.6]{JinZhou:2021sicon}, we deduce
\begin{equation*}
  \|(\nabla U_h^n(q_h^*))_{n=1}^N\|_{\ell^2(L^2(\Omega))} ^2\leq c(\|(f(t_n))_{n=1}^N\|_{\ell^2(H^{-1}(\Omega))}^2+\|\nabla u_0\|_{L^2(\Omega)}^2)\leq c.
\end{equation*}
This and the regularity estimate \eqref{reg-fde-2} imply
$$\|(\nabla(u(t_n)-U_h^n(q_{h,\tau}^*)))_{n=1}^N\|_{\ell^2(L^2(\Omega))}^2\leq c.$$
Thus, the Cauchy-Schwarz inequality, Lemma \ref{lem:err-2}
and \eqref{eqn:nablafyn} imply
\begin{equation*}\label{eqn:I2}
\begin{split}
&\quad\tau \sum_{n=1}^N   |{\rm I}_2^n|
 \le  c\tau  \sum_{n=1}^N \|  \nabla(u(t_n) - U_h^n(q_{h,\tau}^*)) \|_{L^2(\Omega)} \|  \nabla \fy^n \|_{L^2(\Omega)}\\
& \le  c\|  (\nabla(u(t_n) - U_h^n(q_{h,\tau}^*)))_{n=1}^N \|_{\ell^2(L^2(\Omega))} \|  (\nabla \fy^n)_{n=1}^N \|_{\ell^2(L^2(\Omega))}\\
  &\le c \min\big(1,h+h^{-1}  \| ( u(t_n) - U_h^n(q_{h,\tau}^*))_{n=1}^N  \|_{\ell^2(L^2(\Omega))}\big) (1+\|(\nabla q_{h}^{n*})_{n=1}^N \|_{\ell^2(L^2(\Omega))})\\
  &\le  c \min(1,h+h^{-1} \eta)\gamma^{-\frac12} \eta.
\end{split}
\end{equation*}
\noindent\textbf{Step 3: bound the term ${\rm I}_3^n$.} The estimate of the term ${\rm I}_3^n$ is more technical.
It follows directly from the weak formulations \eqref{eqn:var} and \eqref{eqn:fully} that
\begin{align*}
 {\rm I}_3^n 
 &=(\bar \partial_\tau^\alpha [(U_h^n(q_{h,\tau}^*) - U_h^0)-  (u(t_n)-u_0)],  P_h\fy^n )\\
  &\quad+ ( \bar \partial_\tau^\alpha (u(t_n) - u_0) - \partial_t^\alpha (u(t_n) - u_0) , P_h\fy^n )=:  {\rm I}_{3,1}^n + {\rm I}_{3,2}^n.
\end{align*}
Next we bound the two terms ${\rm I}_{3,1}^n$ and ${\rm I}_{3,2}^n$ separately.
By Lemma \ref{lem:deriv-approx}, there holds
\begin{equation*}
|{\rm  I}_{3,2}^n| \le  \| \bar \partial_\tau^\alpha u(t_n)  - \partial_t^\alpha u(t_n)  \|_{L^2(\Omega)}  \| P_h\fy^n \|_{L^2(\Omega)}
           \le c\tau (t_n^{-1} + \ell_n), \quad n=1,2,\ldots, N.
\end{equation*}
Consequently,
\begin{equation*}
 |\tau^2\sum_{m=1}^N \sum_{n=1}^m {\rm I}_{3,2}^n| \le  c \tau^3  \sum_{m=1}^N \sum_{n=1}^m (t_n^{-1} + \ell_n) \ell_n \le c \tau \ell_N .
\end{equation*}
Since $U_h^0(q_{h,\tau}^*)=P_hu_0$ and $u(0)=u_0$, by summation by parts, we have
\begin{align*}
\tau \sum_{n=1}^m {\rm I}_{3,1}^n&= \tau \sum_{n=0}^m  (\bar \partial_\tau^\alpha (U_h^n(q_{h,\tau}^*)-u(t_n)),  P_h\fy^n)
=  \tau \sum_{j=0}^m  ( U_h^j(q_{h,\tau}^*)-u(t_j),  \tau^{-\alpha} \sum_{n=j}^m b_{n-j}^{(\alpha)} P_h\fy^n ).
\end{align*}
Next we appeal to the splitting
\begin{equation*}
  \tau^{-\alpha} \sum_{n=j}^m b_{n-j}^{(\alpha)} P_h\fy^n = \tau^{-\alpha} \sum_{n=j}^m b_{n-j}^{(\alpha)} P_h(\fy^n-\fy^m) + \tau^{-\alpha} \sum_{n=j}^m b_{n-j}^{(\alpha)} P_h\fy^m
: = {\rm II}_{j,m}^1 +  {\rm II}_{j,m}^2.
\end{equation*}
For the weights $b_n^{(\alpha)}$, we have
$\sum_{n=0}^mb_n^{(\alpha)} = b_m^{(\alpha-1)}$ and
$|\tau^{-\alpha} \sum_{n=0}^{m} b_{n}^{(\alpha)}| \leq ct_{m+1}^{-\alpha}$ \cite[Exercise 6.16]{Jin:2021book}.
In view of this and the estimate $\|\fy^m\|_{L^2(\Omega)}\leq c$, the sum ${\rm II}_{j,m}^2$ satisfies
\begin{align*}
 \| {\rm II}_{j,m}^2\|_{L^2(\Omega)} & \le c \| \fy^m \|_{L^2(\Omega)} \Big(\tau^{-\alpha} \sum_{n=0}^{m-j} b_{n}^{(\alpha)}\Big)
   \le c t_{m-j+1}^{-\alpha} \| \fy^m \|_{L^2(\Omega)}\le c t_{m-j+1}^{-\alpha}.
\end{align*}
Then Lemma \ref{lem:err-2}, the Cauchy-Schwarz inequality and Young's inequality for (discrete) convolution imply
\begin{align*}
 \tau^2 \sum_{m=1}^N \sum_{j=1}^m \|U_h^j(q_{h,\tau}^*)  -  u(t_j)  \|_{L^2(\Omega)}  \|{\rm II}_{j,m}^2\|_{L^2(\Omega)}
 \le& c \tau^2 \sum_{j=1}^N \sum_{m=j}^N \|U_h^j(q_{h,\tau}^*)  -  u(t_j)  \|_{L^2(\Omega)}  t_{m-j+1}^{-\alpha}\\
 \le&  c\|(U_h^j(q_{h,\tau}^*)  -  u(t_j))_{j=1}^N \|_{\ell^2(L^2(\Omega))}
 \le c\eta,
\end{align*}
Similarly, by Lemma \ref{lem:CQ} and the Cauchy-Schwarz inequality, we have
\begin{equation*}
\begin{aligned}
&\tau^2 \sum_{m=1}^N \sum_{j=1}^m \|U_h^j(q_{h,\tau}^*)-u(t_j)\|_{L^2(\Omega)}\|{\rm II}_{j,m}^1\|_{L^2(\Omega)}\\
\le& c\tau \sum_{m=1}^N \|(U_h^j(q_{h,\tau}^*)-u(t_j))_{j=1}^m\|_{\ell^2(L^2(\Omega))} \|({\rm II}_{j,m}^1)_{j=1}^m\|_{\ell^2(L^2(\Omega))}  \\
\le&   c \gamma^{-\frac12} \eta   \|(U_h^j(q_{h,\tau}^*)-u(t_j))_{j=1}^N\|_{\ell^2(L^2(\Omega))}\le c  \gamma^{-\frac12} \eta^2.
\end{aligned}
\end{equation*}
These two estimates and the triangle inequality lead to
\begin{equation}\label{eqn:I3}
\Big|\tau^2 \sum_{m=1}^N \sum_{n=1}^m(\bar\partial_\tau^\alpha (U_h^n(q_{h,\tau}^*) - u(t_n)),P_h\fy^n )\Big|\le c\eta +  c\gamma^{-\frac12} \eta^2 .
\end{equation}
The three estimates \eqref{eqn:I2}, \eqref{eqn:I1}, and \eqref{eqn:I3} together imply
\begin{align*}
 \Big|\tau^2\sum_{m=1}^N \sum_{n=1}^m((q^\dag-q_{h}^{n*})\nabla u(t_n),\nabla\fy^n)\Big|
  \le  c(h \gamma^{-1}\eta + \gamma^{-\frac12}\eta+ h^{-1}\gamma^{-\frac12}\eta+\gamma^{-\frac12}\eta)\eta.
\end{align*}
Combining the preceding estimates gives the desired error estimate.
\qed

{\begin{remark}\label{rmk:error-q}
Under the $\beta$-positivity condition in Definition \ref{def:condP1}, for any $\delta>0$,
with $\eta= \tau + h^2  + \delta + \gamma^{\frac12}$, the argument of Theorem \ref{thm:stab1} gives
\begin{equation*}
  \|(q^\dag(t_n)-q_{h}^{n*})_{n=1}^N\|_{\ell^2(L^2(\Omega))}\leq c ((h \gamma^{-1}\eta+ \gamma^{-\frac12}\min(1,h^{-1}\eta))\eta)^\frac{1}{2(1+\beta)}.
\end{equation*}
Theorem \ref{thm:error-q} provides useful guidelines for choosing the regularization
parameter $\gamma$ and the discretization parameters $h$ and $\tau$. Indeed, by
suitably balancing the terms in the estimate, we should choose $\gamma \sim \delta^2$,
$h\sim\sqrt{\delta}$ and $\tau\sim\delta$ in practical computation in order to
effect optimal computational complexity. Under the $\beta$-positivity
condition, this choice of $\gamma,h$ and $\tau$ gives
$$\|(q^\dag(t_n)-q_{h}^{n*})_{n=1}^N\|_{\ell^2(L^2(\Omega))}  \le c\delta^{\frac1{4(1+\beta)}}.$$
Note that this result is consistent with Theorem \ref{thm:stab2}.
\end{remark}}

\section{Numerical results and discussions}\label{sec:numer}

Now we present numerical experiments to illustrate the
feasibility of recovering a space-time dependent diffusion
coefficient $q^\dag(x,t)$. Throughout, the corresponding discrete optimization problem is
solved by the conjugate gradient (CG) method (cf. \cite{Alifanov:1995}), with the
gradient computed using the standard adjoint technique. The lower and upper
bounds in the admissible set $\mathcal{A}$ are taken to be
$c_0=0.5$ and $c_1=5$, respectively, and are enforced by a projection step after
each CG iteration. Generally, the algorithm converges within tens of
iterations, with the maximum number of iterations fixed at 100. The noisy data $z^\delta$ is generated by
\begin{equation*}
  z^\delta(x,t) = u(q^\dag)(x,t) + \epsilon\|u(q^\dag)\|_{L^\infty(0,T;L^\infty(\Omega))}\xi(x,t),\quad (x,t)\in \Omega\times(0,T),
\end{equation*}
where $\xi(x,t)$ follows the standard Gaussian distribution, and $\epsilon\geq 0$ denotes
the (relative) noise level. The reference data $u(q^\dag)$ is computed with
a finer mesh. The noisy data $z^\delta$ is first generated on a fine
spatial-temporal mesh and then interpolated to a coarse spatial/ temporal mesh for the
inversion step. The regularization parameter $\gamma$ in the functional $J_\gamma$
is determined in a trial and error manner.

\subsection{Numerical results in one spatial dimension}
First we present numerical results for two examples on unit interval $\Omega=(0,1)$.
The first example has a smooth exact coefficient $q^\dag$, and the problem is homogeneous.
\begin{example}\label{exam:1dsmooth}
$u_0=x(1-x)$, $f\equiv 0$, $q^\dag=2+\sin(\pi x)e^{-0.1t}$, $T=0.1$.
\end{example}

The numerical results for Example \ref{exam:1dsmooth} with different level $\epsilon$ of noises are
shown in Table \ref{tab:exam1}, where the quantities $e_q$ and $e_u$, respectively, defined by
\begin{equation*}
e_q = \|(q_{h}^{n*}-q^\dag(t_n))_{n=1}^N\|_{\ell^2(L^2(\Omega))}\quad\mbox{and}\quad e_q = \|(U_{h}^n(q_{h,\tau}^*)-u(q^\dag)(t_n))_{n=1}^N\|_{\ell^2(L^2(\Omega))}
\end{equation*}
are used to measure the convergence of the discrete approximations. The results are
computed with a fixed small time step size $\tau=1\times 10^{-4}$, and $\gamma\sim O(\delta^2)$
and $h\sim O(\sqrt{\delta})$, cf. Remark \ref{rmk:error-q}. It is observed that the
$\ell^2(L^2(\Omega))$ error $e_q$ of the reconstruction $q_{h,\tau}^*$ decreases
steadily as the noise level $\epsilon$ tends to zero with a rate roughly $O(\delta^{0.40})$.
This convergence rate is consistently observed for all three fractional orders, and thus
the order $\alpha$ does not influence much the convergence rates, provided that the time step
size $\tau$ is sufficiently small. The empirical rate is faster than the theoretical one in Theorem \ref{thm:error-q}.
It remains an outstanding question to obtain the optimal convergence of discrete approximations. Meanwhile,
the quantity $e_u$ converges also to zero as the noise level $\epsilon\to 0$, at a rate
nearly $O(\delta^{0.9})$, which agrees well with the theoretical prediction $O(\delta)$
from Lemma \ref{lem:err-2}. We refer to Fig. \ref{fig:recon-exam1} for exemplary reconstructions:
the recoveries are qualitatively comparable with each other and all reasonably accurate
for both $\epsilon=\text{1.00e-2}$ and $\epsilon=\text{5.00e-2}$, thereby concurring
with the errors in Table \ref{tab:exam1}.

\begin{table}[hbt!]
\setlength{\tabcolsep}{8pt}
\centering
\caption{The errors $e_q$
and $e_u$ for Example \ref{exam:1dsmooth}.\label{tab:exam1}}
\begin{tabular}{c|c|c|cccccc}
\hline
         & $\epsilon$ & 5.00e-2  &  3.00e-2  &  1.00e-2  &  5.00e-3  &  3.00e-3  &  1.00e-3  &     \\
$\alpha$ & $\gamma$   & 5.00e-10 &  1.80e-10 &  2.00e-11 &  5.00e-12 &  1.80e-12 &  2.00e-13 & rate\\
\hline
$0.25$ & $e_q$        & 1.26e-2  &  1.28e-2  &  5.57e-3  &  4.00e-3  &  3.27e-3  &  2.45e-3  &0.467\\
       & $e_u$        & 1.65e-5  &  1.14e-5  &  5.25e-6  &  3.31e-6  &  1.65e-6  &  4.84e-7  &0.880\\
$0.50$ & $e_q$        & 1.07e-2  &  1.47e-2  &  6.86e-3  &  5.15e-3  &  4.04e-3  &  3.28e-3  &0.375\\
       & $e_u$        & 3.93e-5  &  2.83e-5  &  1.48e-5  &  6.80e-6  &  3.41e-6  &  1.22e-6  &0.897\\
$0.75$ & $e_q$        & 1.01e-2  &  9.09e-3  &  7.06e-3  &  4.77e-3  &  3.93e-3  &  2.50e-3  &0.363\\
       & $e_u$        & 6.40e-5  &  2.71e-5  &  1.70e-5  &  5.69e-6  &  4.37e-6  &  1.57e-6  &0.916\\
\hline
\end{tabular}
\end{table}

\begin{figure}[hbt!]
\setlength{\tabcolsep}{0pt}
   \centering
   \begin{tabular}{ccccc}
   \includegraphics[width=0.2\textwidth]{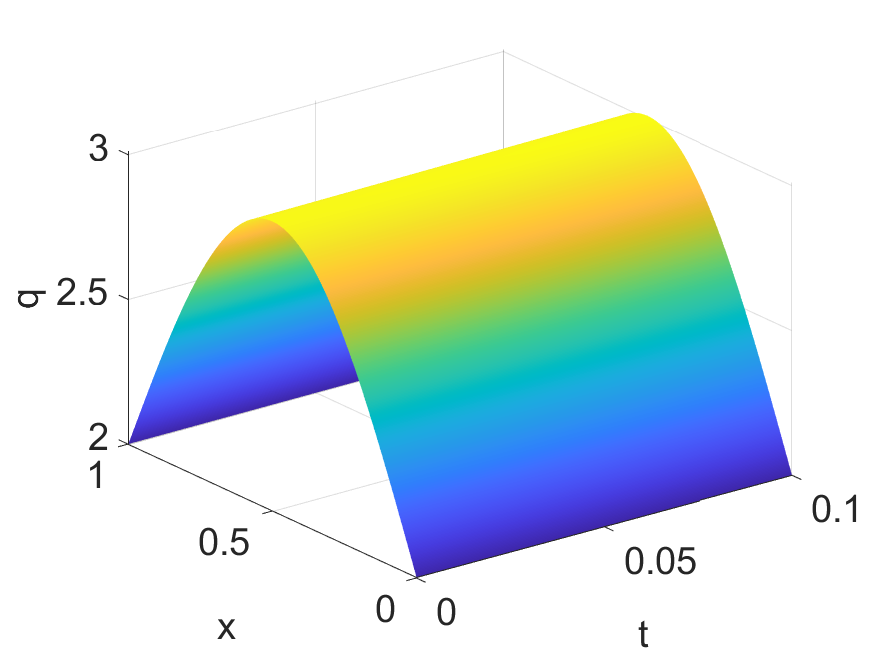}&\includegraphics[width=0.2\textwidth]{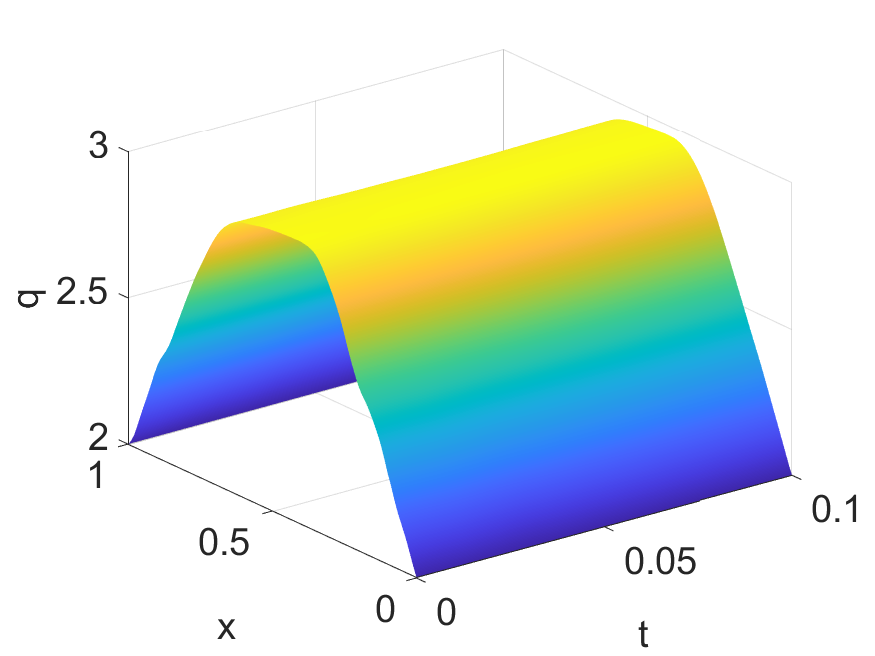}
   & \includegraphics[width=0.2\textwidth]{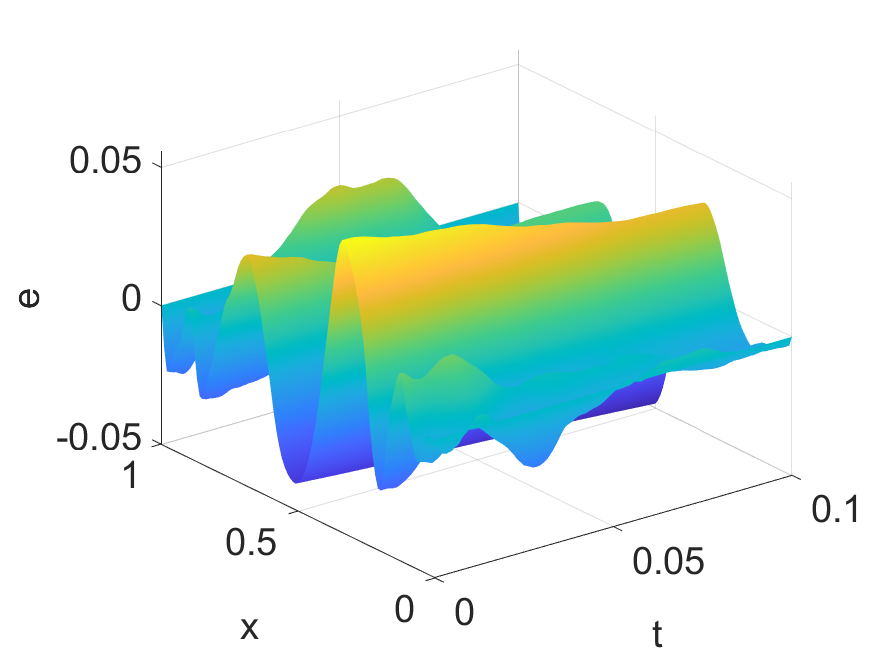} & \includegraphics[width=0.2\textwidth]{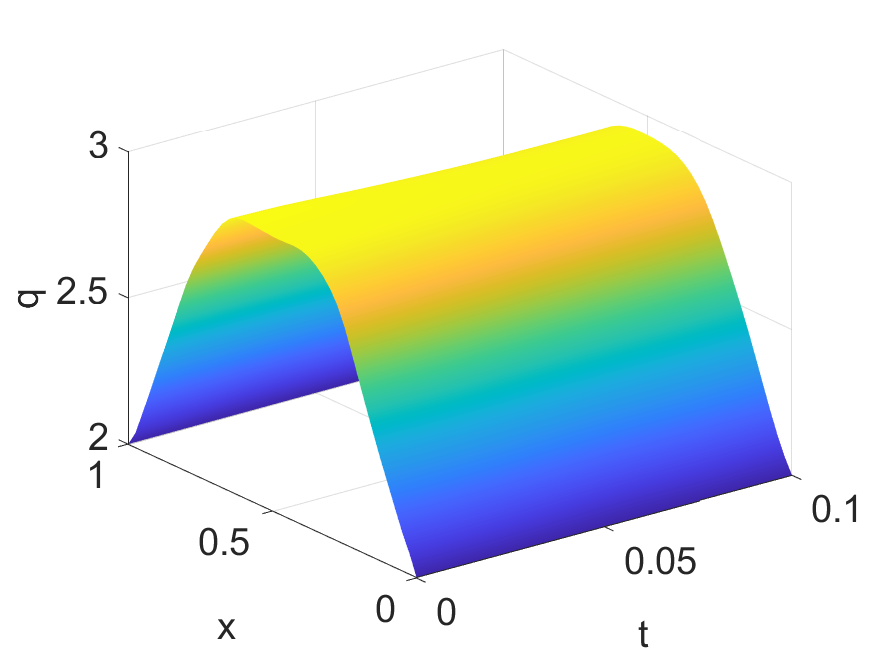}
       & \includegraphics[width=0.2\textwidth]{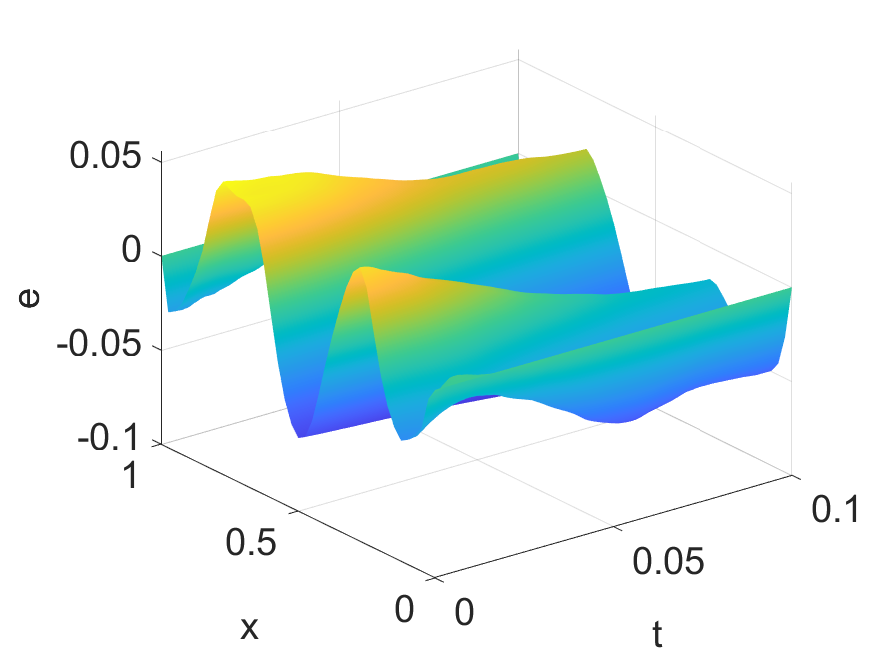}\\
      $q^\dag$  & $\epsilon=\text{1.00e-2}$ & & $\epsilon = \text{5.00e-2}$
   \end{tabular}
   \caption{Numerical reconstructions and the corresponding pointwise error $e=q_{h,\tau}^*-q^\dag$ for Example \ref{exam:1dsmooth} with $\alpha=0.5$, at two noise levels $\epsilon = $ 1.00e-2 and 5.00e-2.
   \label{fig:recon-exam1}}
\end{figure}

The second example has a nonsmooth coefficient $q^\dag$.
\begin{example}\label{exam:1dnonsmooth}
$u_0(x)=x(1-x)$, $f\equiv 0 $, $q^\dag=2+\min(x,1-x)(1-t)$, $T=0.1$.
\end{example}

The numerical results for Example \ref{exam:1dnonsmooth} with different levels of noise
are given in Table \ref{tab:exam2}. Note that the exact coefficient $q^\dag$ does not
satisfy the regularity condition in Assumption \ref{ass:zdelta}, and thus one expects
the convergence rates of $e_q$ and $e_u$ suffer from a loss. Indeed, the error
$e_q$ converges at a slower rate $O(\delta^{0.3})$, which, however, is still higher than that
predicted by Remark \ref{rmk:error-q}. Interestingly, the error $e_u$ converges
roughly at the rate $O(\delta)$, confirming the estimate in Lemma \ref{lem:err-2}.
This observation holds for all three fractional orders.
Exemplary reconstructions are shown in Fig. \ref{fig:recon-exam2}, which shows clearly
the convergence of the discrete approximations as the noise level $\epsilon$ decreases.

\begin{table}[hbt!]
\setlength{\tabcolsep}{8pt}
\centering
\caption{The errors $e_q$ and $e_u$ for Example \ref{exam:1dnonsmooth}.\label{tab:exam2}}
\begin{tabular}{c|c|c|cccccc}
\hline
         & $\epsilon$ & 5.00e-2 &  3.00e-2  & 1.00e-2  & 5.00e-3  & 3.00e-3  & 1.00e-3  &     \\
$\alpha$ & $\gamma$   & 1.00e-9 &  3.60e-10 & 4.00e-11 & 1.00e-11 & 3.60e-12 & 4.00e-13 & rate\\
\hline
$0.25$ & $e_q$        & 9.58e-3 &  7.59e-3 &  5.77e-3 &  5.10e-3 &  4.55e-3 &  3.71e-3  &0.234\\
       & $e_u$        & 1.85e-5 &  1.15e-5 &  5.02e-6 &  3.14e-6 &  1.51e-6 &  4.82e-7  &0.910\\
$0.50$ & $e_q$        & 1.28e-2 &  8.17e-3 &  6.39e-3 &  4.70e-3 &  4.11e-3 &  3.94e-3  &0.297\\
       & $e_u$        & 5.44e-5 &  2.88e-5 &  1.05e-5 &  7.79e-6 &  3.39e-6 &  1.02e-6  &0.977\\
$0.75$ & $e_q$        & 1.17e-2 &  8.41e-3 &  6.02e-3 &  4.07e-3 &  4.05e-3 &  3.75e-3  &0.301\\
       & $e_u$        & 5.93e-5 &  3.32e-5 &  1.44e-5 &  7.31e-6 &  4.14e-6 &  1.33e-6  &0.951\\
\hline
\end{tabular}
\end{table}

\begin{figure}[hbt!]
\setlength{\tabcolsep}{0pt}
   \centering
   \begin{tabular}{ccccc}
   \includegraphics[width=0.2\textwidth]{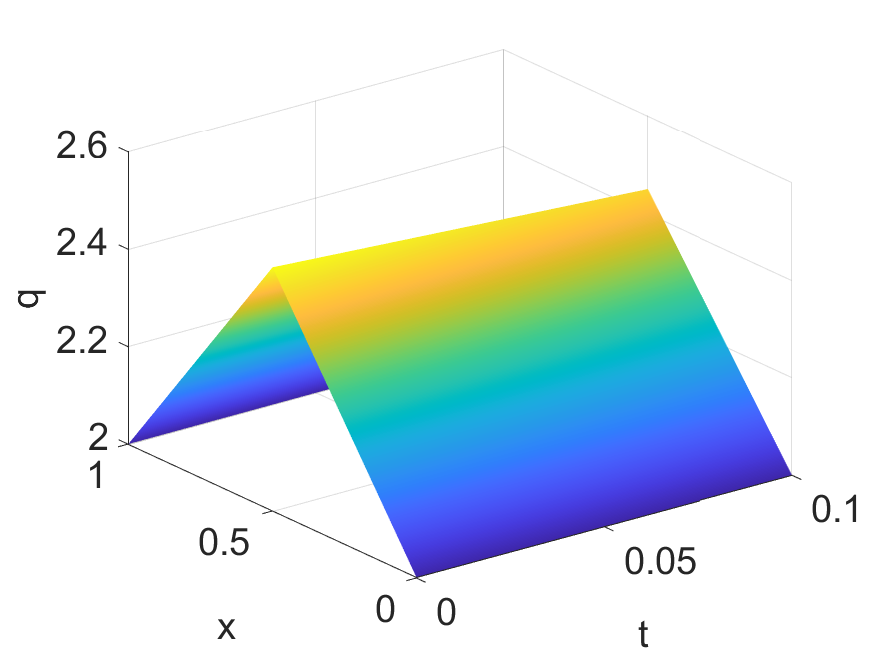}&\includegraphics[width=0.2\textwidth]{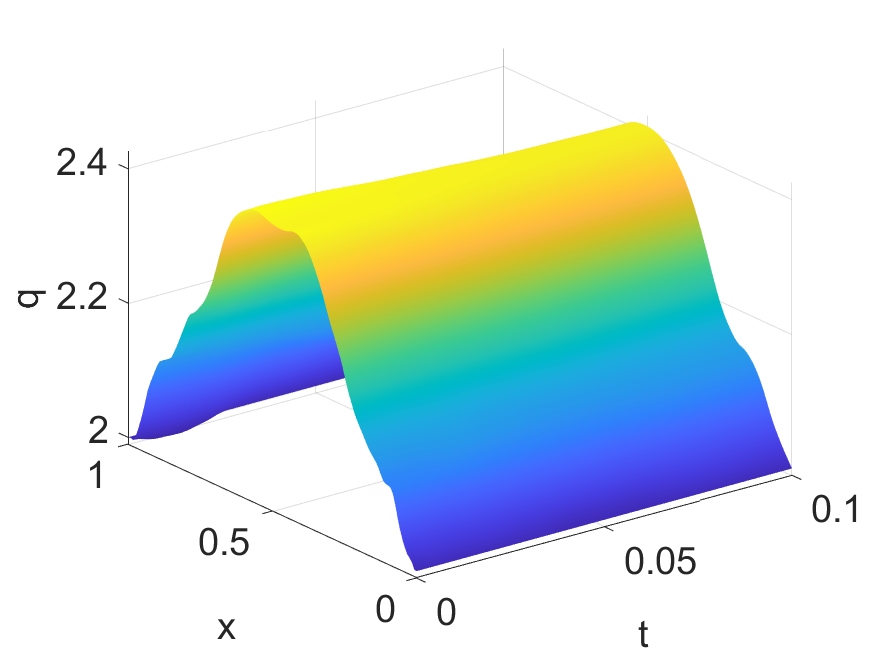} & \includegraphics[width=0.2\textwidth]{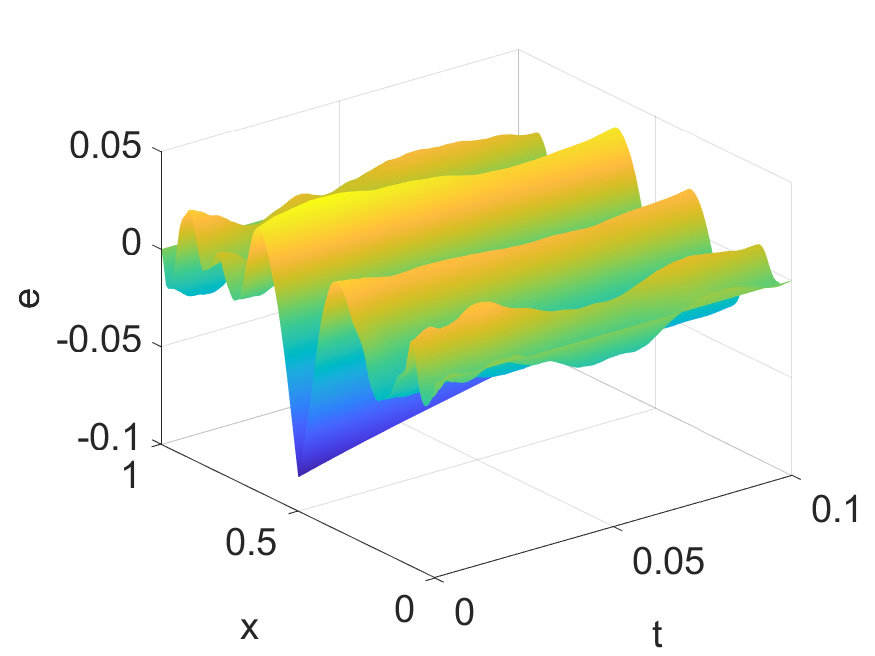} & \includegraphics[width=0.2\textwidth]{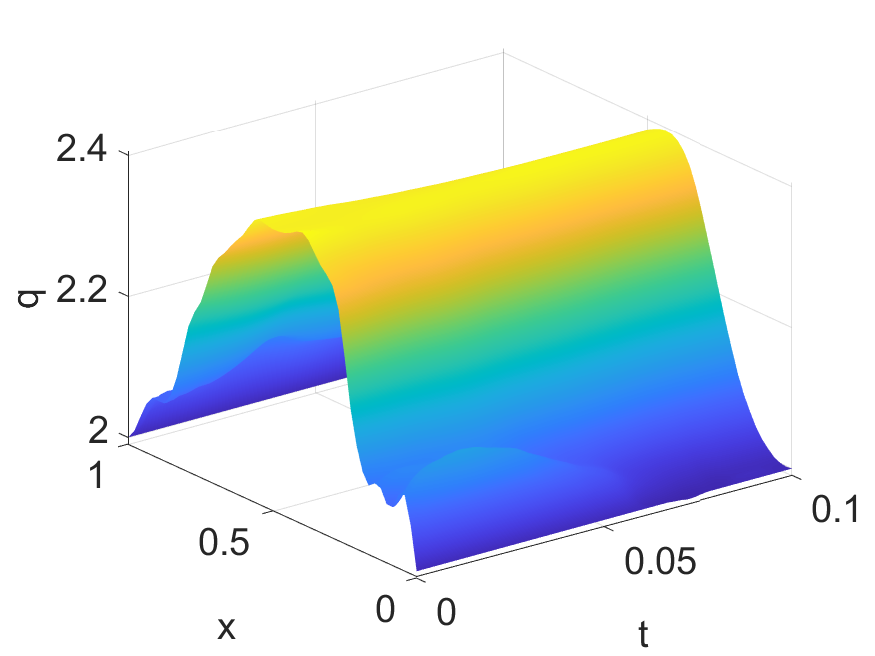}& \includegraphics[width=0.2\textwidth]{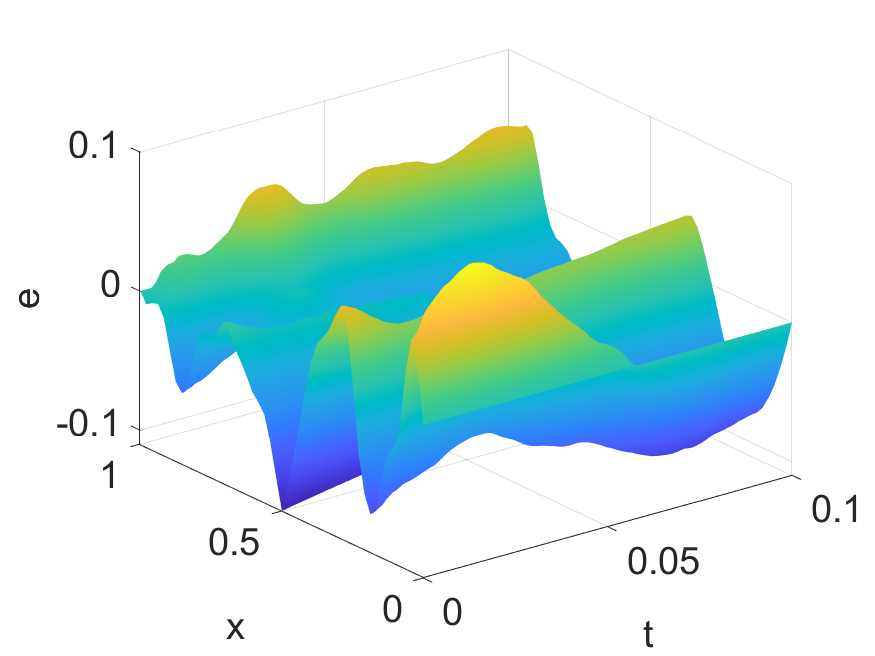}\\
      $q^\dag$  & $\epsilon=\text{1.00e-2}$ & & $\epsilon = \text{5.00e-2}$
   \end{tabular}
   \caption{Numerical reconstructions and the corresponding pointwise error $e=q_{h,\tau}^*-q^\dag$ for Example \ref{exam:1dnonsmooth} with $\alpha=0.5$, at two noise levels $\epsilon=$ 1.00e-2 and 5.00e-2.
   \label{fig:recon-exam2}}
\end{figure}

\subsection{Numerical results in two spatial dimension}

Now we present numerical results for the following example on the unit square $\Omega=(0,1)^2$.
The domain $\Omega$ is first uniformly divided into $M^2$ small squares, each with side length $1/M$, and then a
uniform triangulation is obtained by connecting the low-left and upper-right vertices of each small square.
The reference data is first computed on a finer mesh with $M=100$ and a time step size $\tau=1/2000$.
The inversion step is carried out with a mesh $M=40$ and $\tau=1/500$.

\begin{example}\label{exam:2d}
$u_0(x_1,x_2)=x_1(1-x_1)\sin (\pi x_2)$, $f=\sin(\pi x_1)\sin(\pi x_2)(1+t)$, $q^\dag(x_1,x_2)=1+\sin(\pi x_1) x_2(1-x_2)$, and $T=1$.
\end{example}

The numerical results for the example with different noise levels are presented in Fig.
\ref{fig:recon-exam3}. The empirical observations are in excellent agreement
with that for the one-dimensional problem in Example \ref{exam:1dsmooth}: we observe a steady
convergence as the noise level $\epsilon$ decreases to zero. The plots also indicate that
for the pointwise error $e=q_h^*-q^\dag$, the error in recovering the peak is dominating,
however, the overall shape is well resolved.

\begin{figure}[hbt!]
\setlength{\tabcolsep}{0pt}
   \centering
   \begin{tabular}{ccccc}
   \includegraphics[width=0.20\textwidth]{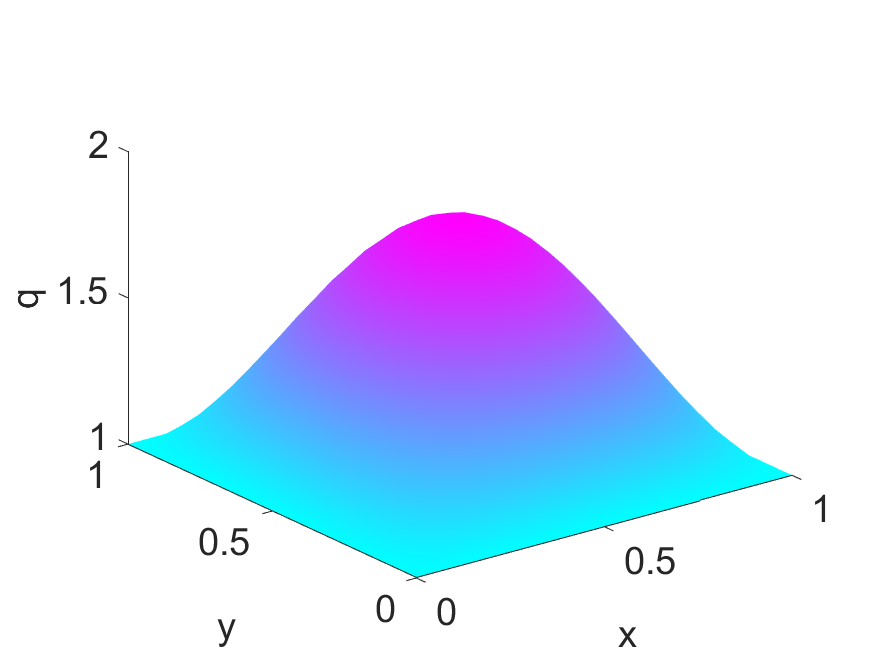}&\includegraphics[width=0.20\textwidth]{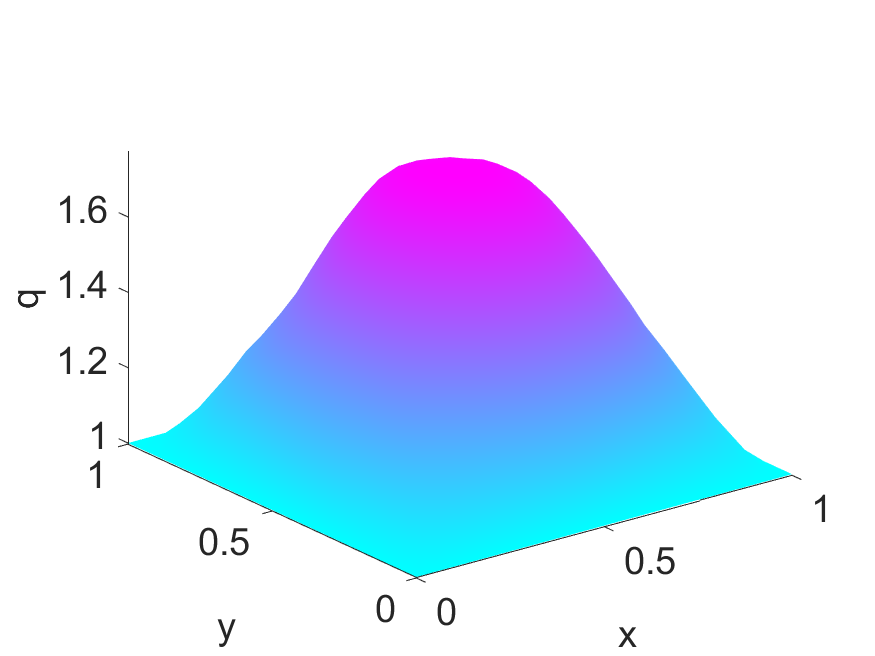} & \includegraphics[width=0.20\textwidth]{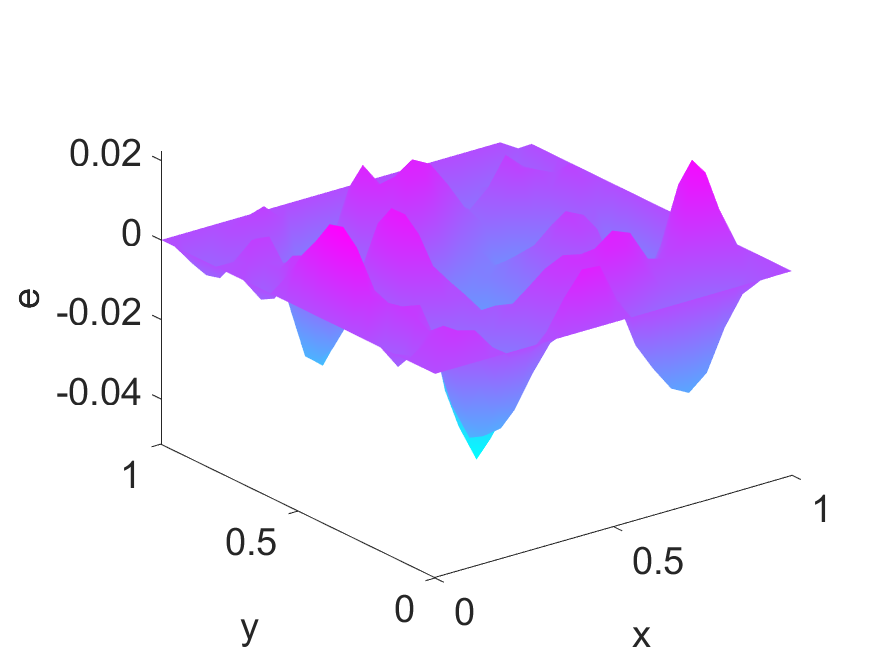} & \includegraphics[width=0.20\textwidth]{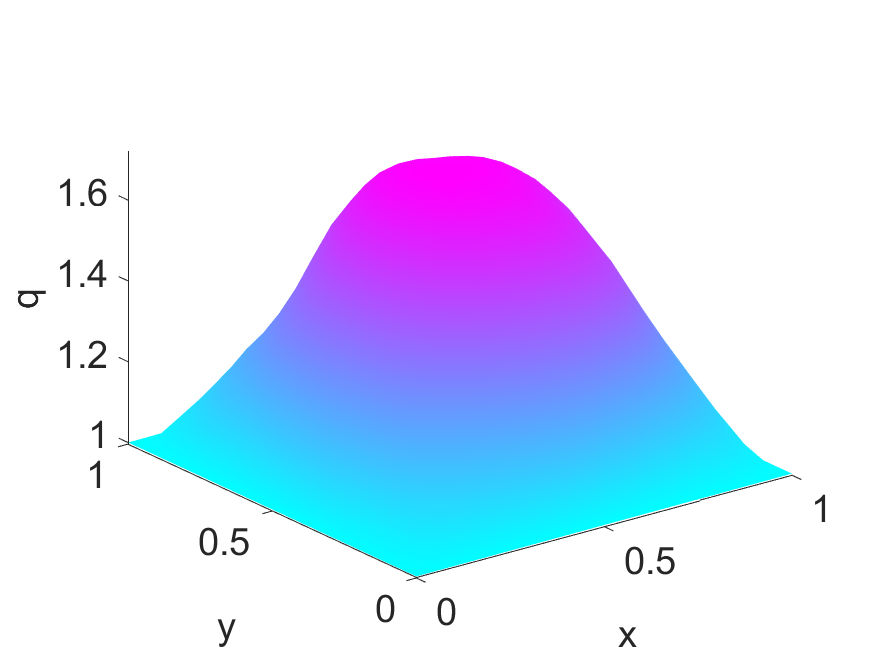}
      &  \includegraphics[width=0.20\textwidth]{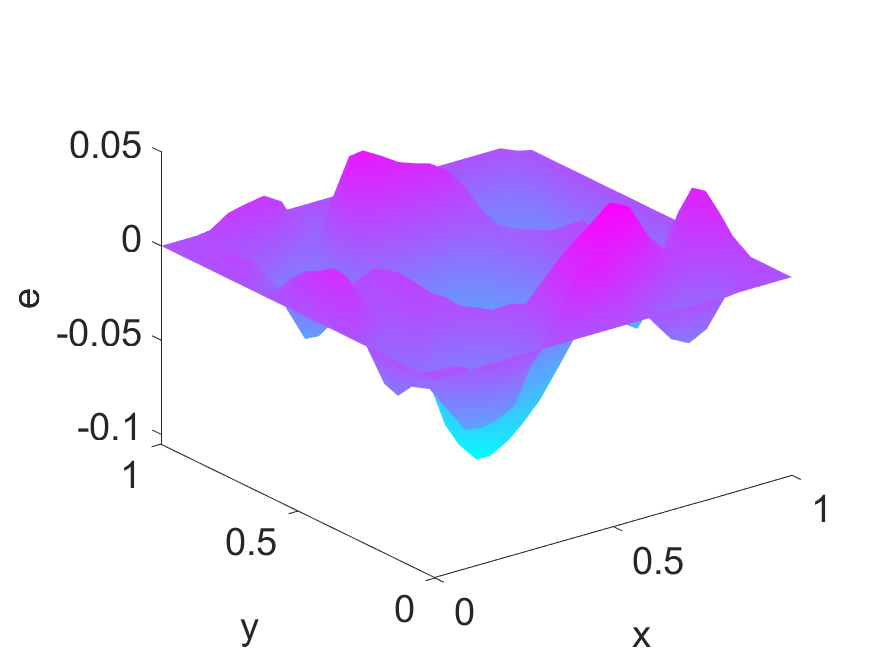}\\
   &\includegraphics[width=0.20\textwidth]{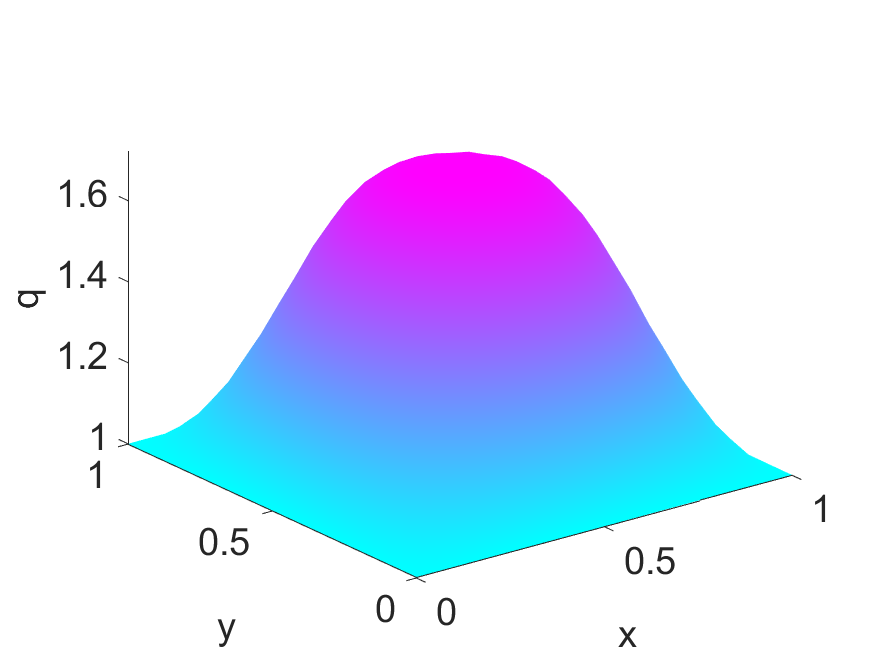}& \includegraphics[width=0.2\textwidth]{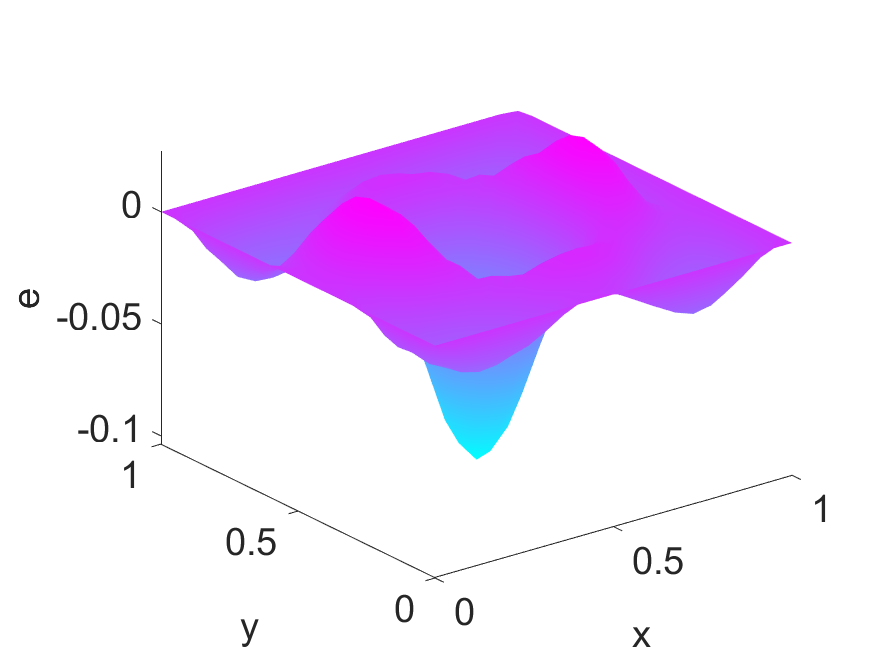} & \includegraphics[width=0.20\textwidth]{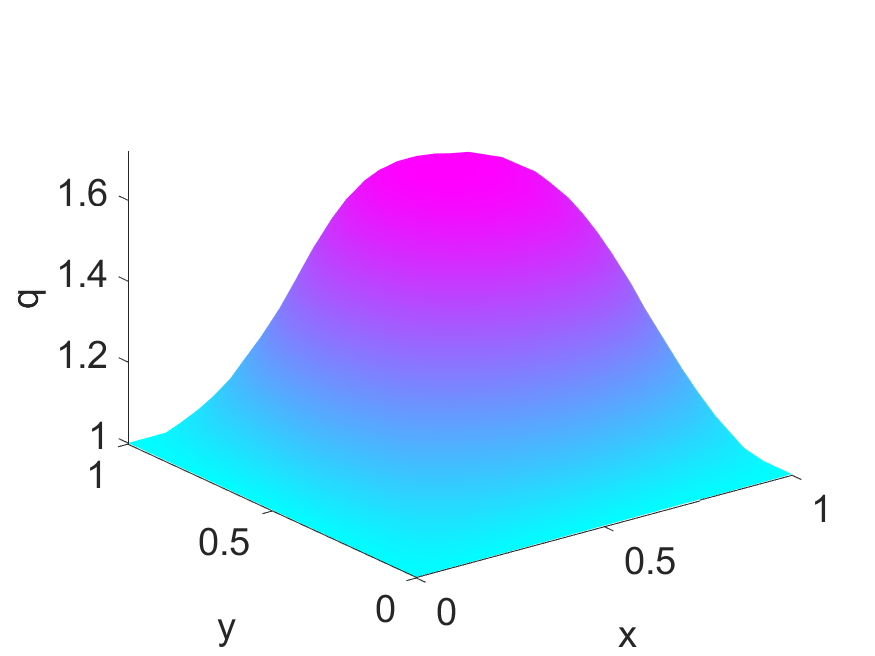} & \includegraphics[width=0.2\textwidth]{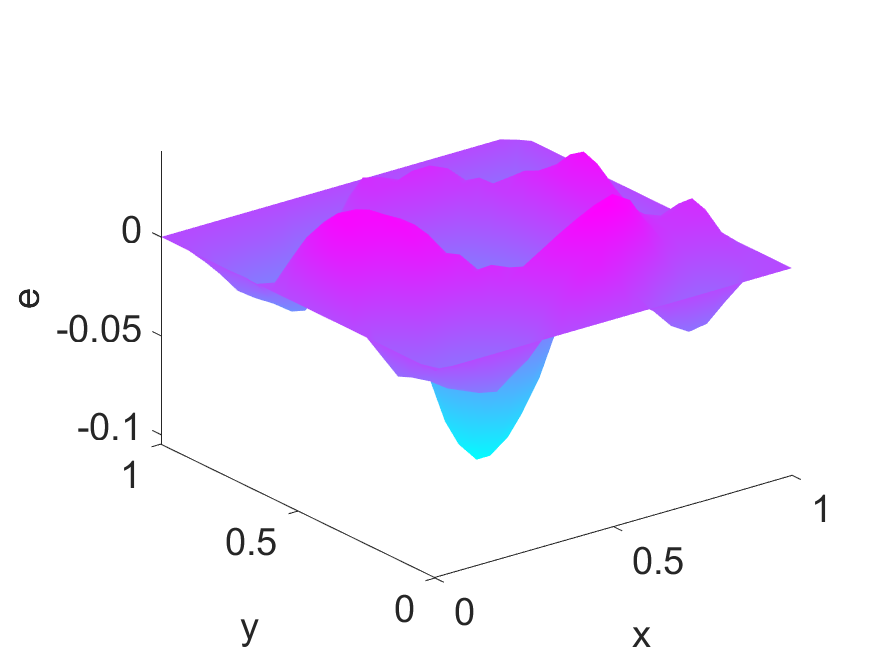}\\
   &\includegraphics[width=0.20\textwidth]{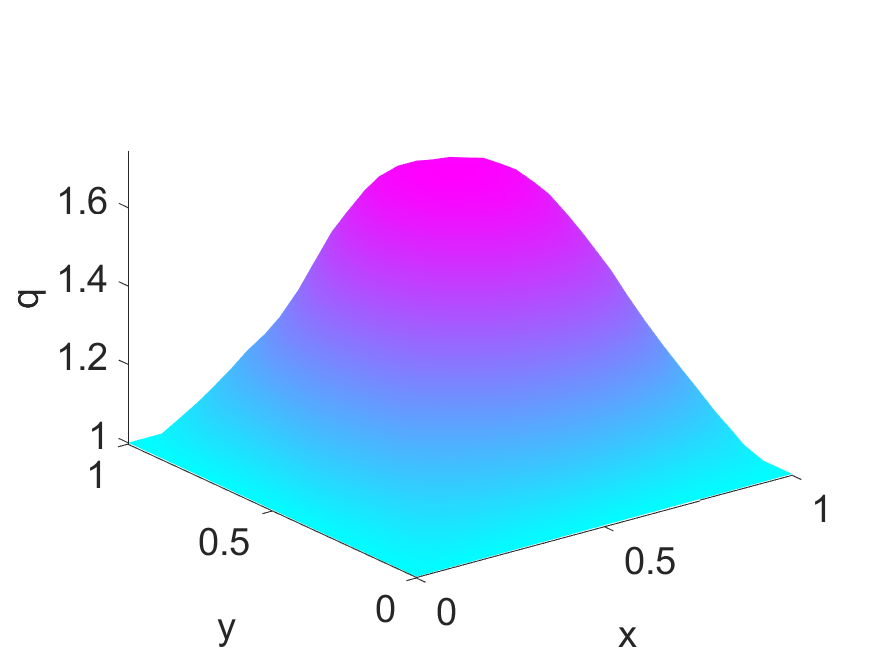}
   & \includegraphics[width=0.2\textwidth]{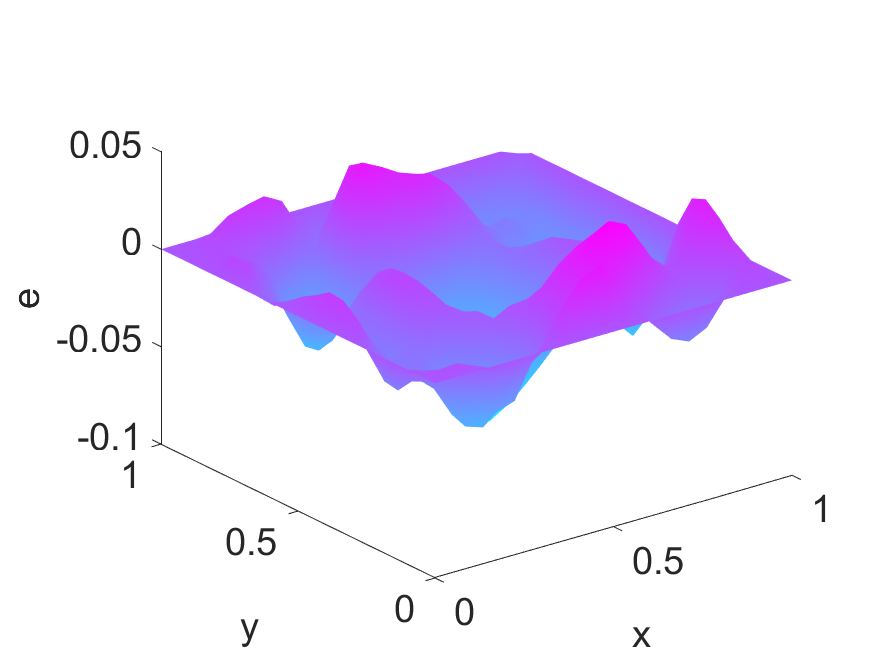} & \includegraphics[width=0.2\textwidth]{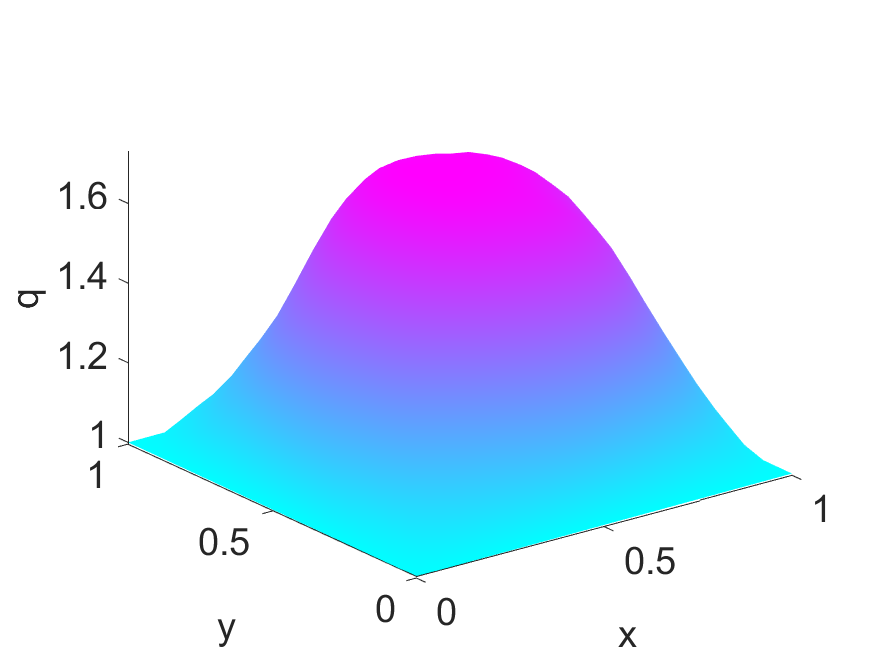}
       & \includegraphics[width=0.2\textwidth]{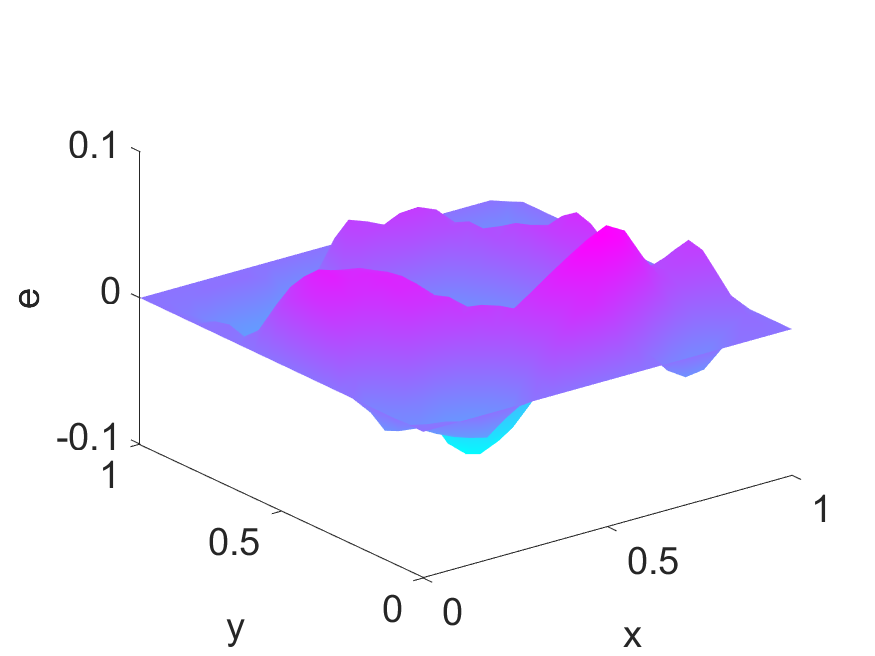}\\
      $q^\dag$  & $\epsilon=\text{1.00e-2}$ & & $\epsilon = \text{5.00e-2}$\\
   \end{tabular}
   \caption{Numerical reconstructions for Example \ref{exam:2d} with $\epsilon = $1e-2 and 5.00e-2 and the corresponding pointwise error $e=q_{h,\tau}^*-q^\dag$, plotted for $T=0.5$, for $\alpha=0.25$ (top), $\alpha=0.50$ (middle) and $\alpha=0.75$ (bottom). \label{fig:recon-exam3}}
\end{figure}

\section*{Acknowledgements}

The work of B. Jin was partially supported by UK EPSRC EP/T000864/1 and a start-up fund from The Chinese University of Hong Kong, and that of  Z. Zhou by Hong Kong Research Grants Council grant (Project No. 15304420) and an internal grant of Hong Kong Polytechnic University (Project ID: P0031041, Work Programme: ZZKS).

\appendix

\section{Proof of Theorem \ref{thm:weak-sol}}
\label{appA}

We give the proof of Theorem \ref{thm:weak-sol}. The argument follows largely that of
\cite[Theorem 6.14]{Jin:2021book}. It is provided only for completeness.

\begin{proof}
Let $w=u-u_0$. Then it suffices to show that there exists a unique solution $w \in L^p(0,T; W_0^{1,r}
\II)\cap C([0,T];L^r\II)$ and $\partial_t^\alpha w  \in L^p(0,T; W^{-1,r}\II)$, where $w$ satisfies
in $W^{-1,r}\II$
\begin{equation}\label{eqn:fde-w-1}
   \partial_t^\alpha w(t) + A(t) w(t) = f(t) - A(t)u_0,\quad t\in(0,T],\quad \text{with} ~~w(0)=0.
\end{equation}
For any $\theta\in[0,1]$, consider the auxiliary problem
\begin{align}\label{PDE-theta}
\begin{aligned}
&\Dal w(t) + A(\theta t)w(t)
= f(t) - A(t)u_0, ~~t\in(0,T],\quad \mbox{with }
w(0)=0,
\end{aligned} 
\end{align}
and let
$D=\{\theta\in[0,1]:\mbox{\eqref{PDE-theta} has a solution }
u\in L^p(0,T; W_0^{1,r}(\Omega)) \mbox{ such that } \Dal w\in L^p(0,T; W^{-1,r}(\Omega)) \}.$
Next we prove that the set $D$ is a closed subset of $[0,1]$.
Since $u_0\in W^{1,r}_0(\Omega)$, we have $f - A(t)u_0 \in L^p(0,T; W^{-1,r}\II)$,
and by Lemma \ref{lem:max-weak}, we deduce $0\in D$ and $D\neq \emptyset $.
Then for any $\theta\in D$ and $t_0 \in (0,T]$, we rewrite \eqref{PDE-theta} as
\begin{align*}
\begin{aligned}
&\Dal w(t) + A(\theta t_0)w(t) = f(t) - A(t)u_0 + (A(\theta t_0)-A(\theta t))w(t) ,\quad t\in(0,T],\quad \mbox{with }
w(0)=0.
\end{aligned}
\end{align*}
By the maximal $L^p$ regularity in Lemma \ref{lem:max-weak} and the perturbation estimate in Lemma \ref{lem:perturb}, we obtain
\begin{align}
&\|\Dal w\|_{L^p(0,t_0;W^{-1,r}(\Omega))}+\| \nabla w\|_{L^p(0,t_0;L^r(\Omega))} \nonumber\\
\le&c\|f- A(t)u_0\|_{L^p(0,t_0;W^{-1,r}(\Omega))}
+ c\|(A(\theta t_0)-A(\theta t))w(t)\|_{L^p(0,t_0; W^{-1,r}(\Omega))} \nonumber\\
\le&c\|f- A(t)u_0\|_{L^p(0,t_0;W^{-1,r}\II)}+ c\|(t_0-t) \nabla w\|_{L^p(0,t_0;L^r(\Omega))}.\label{maxLp-1}
\end{align}
Let $g(t)=\| \nabla w\|_{L^p(0,t; L^r(\Omega))}^p $. Since
$g'(t) = \| \nabla w(t)\|_{L^r(\Omega)}^p$ and $g(0)=0$, \eqref{maxLp-1} and integration by parts gives
\begin{align*}
g(t_0)&\le c\|f- A(t)u_0\|_{L^p(0,t_0;W^{-1,r}\II)}^p    + c\int_0^{t_0}(t_0-t)^p g'(t)\d t  \\
&= c\|f- A(t)u_0\|_{L^p(0,t_0;W^{-1,r}\II)}^p    + cp\int_0^{t_0}(t_0-t)^{p-1}g(t)\d t.
\end{align*}
Then the standard Gronwall's inequality implies
\begin{align*}
 \| \nabla w\|_{L^p(0,t_0; L^r(\Omega))} \leq c\|f - A(t)u_0\|_{L^p(0,t_0; W^{-1,r}(\Omega))}.
\end{align*}
This inequality and \eqref{maxLp-1} yield
\begin{align}\label{maxLp-2}
&\|\Dal w\|_{L^p(0,t_0; W^{-1,r}(\Omega))}+\| \nabla w\|_{L^p(0,t_0; L^r(\Omega))} \le
c\|f- A(t)u_0\|_{L^p(0,t_0;W^{-1,r}(\Omega))} .
\end{align}
Since this estimate is independent of $\theta\in D$, $D$ is a closed subset of $[0,1]$.
Next we show that $D$ is open with respect to
the subset topology of $[0,1]$.  In fact, for any $\theta_0\in D$ and $\theta\in[0,1]$  close to $\theta_0$,
we rewrite  problem \eqref{PDE-theta}  as
\begin{align*}
\begin{aligned}
&\Dal w(t) + A(\theta_0 t)w(t) +(A(\theta t)-A(\theta_0 t))w(t)
= f(t) -A(t)u_0,\quad t\in(0,T],\quad \mbox{with }
w(0)=0 ,
\end{aligned}
\end{align*}
which is equivalent to
\begin{align*}
&[1 + (\Dal + A(\theta_0 t))^{-1} (A(\theta t)-A(\theta_0 t))]w(t)
= (\Dal + A(\theta_0 t))^{-1} (f(t)-A(t)u_0).
\end{align*}
The estimate \eqref{maxLp-2} and Lemma \ref{lem:perturb} imply that for any $v\in W_0^{1,r}\II$
\begin{align*}
&\| (\Dal + A(\theta_0 t))^{-1} (A(\theta t)-A(\theta_0 t)) v\|_{L^p(0,T;W^{1,r}(\Omega))}\\
\le& c\|  (A(\theta t)-A(\theta_0 t)) v \|_{L^p(0,T;W^{-1,r}(\Omega))}
\le c|\theta-\theta_0| \| \nabla  v \|_{L^p(0,T; L^{r}\II)}.
\end{align*}
Thus for $\theta$ sufficiently close to $\theta_0$, the operator $1 + (\Dal + A(\theta_0 t))^{-1} (A(\theta t)-
A(\theta_0 t)) $ is invertible on $L^p(0,T; W_0^{1,r}(\Omega))$, which implies $\theta\in D$. Thus $D$
is open with respect to the subset topology of $[0,1]$.
Since $D$ is both closed and open respect to the subset topology of $[0,1]$, we deduce $D=[0,1]$.
In sum, problem \eqref{eqn:fde-w-1} has a solution $w$ such that $w\in L^p(0,T; W_0^{1,r}\II)$
and  $\partial_t^\alpha w \in L^p(0,T; W^{-1,r}\II)$. Since $W^{-1,r}\II$ is UMD \cite[Proposition 4.2.17]{Hytonen:2016}, and $w(0)=0$, we deduce $w\in W^{\alpha,p}(0,T;W^{-1,r}\II)$.
By interpolation between $W^{\alpha,p}(0,T;W^{-1,r}\II)$ and $L^p(0,T; W_0^{1,r}\II)$ \cite[Theorem 5.2]{Amann:2000},
we derive $u\in W^{\frac{\alpha}2, p}(0,T; L^r\II)$.
This, Sobolev embedding theorem and the condition $p>\frac2\alpha$ imply $w\in C([0,T]; L^r\II)$.
Similarly, if $r > d$ and  $p>\frac{2r}{\alpha(r- d)}$, interpolation \cite[Theorem 5.2]{Amann:2000}
and Sobolev embedding theorem imply that for $\frac{1}{\alpha p}<\theta<(\frac12-\frac{d}{2r})$ 
\begin{equation}\label{eqn:reg-L2Linf}
\begin{split}
  u &\in W^{\alpha,p}(0,T;W^{-1,r}\II) \cap L^p(0,T; W_0^{1,r}\II)\\
  &\hookrightarrow W^{\alpha\theta,p}(0,T; W^{1-2\theta,r}(\Omega)) \hookrightarrow L^\infty((0,T)\times \Omega).
\end{split}
\end{equation}
This completes the proof of the theorem.
\end{proof}

\section{Nonsmooth data estimates}  \label{ssec:techest}

In this appendix, we collect several nonsmooth data estimates for the numerical approximations of
 the direct problem \eqref{eqn:fde}, which are central for deriving
the basic estimates in Section \ref{ssec:basic}. First, we provide two useful results, i.e., error estimate and
maximal $\ell^p$ regularity, for the following fully discrete scheme for problem
\eqref{eqn:fde}: find $U_h^n(q^\dag)\in X_h$ satisfying $U_h^0=P_hu_0$ and
\begin{align}\label{eqn:fully-00}
  \bar \partial_\tau^\alpha U_h^n(q^\dag) + A_h(q^\dag(t_n)) U_h^n(q^\dag) = P_hf(t_n)=: f^n, \quad n=1,2,\ldots,N.
\end{align}
\begin{lemma}\label{lem:err-parabolic}
Let $q^\dag$ be the exact coefficient and $u\equiv u(q^\dag)$ the solution to problem \eqref{eqn:var},
and $\{U_h^n(q^\dag)\}$  the solution to problem \eqref{eqn:fully-00}. Then
under Assumption \ref{ass:data-2}, the following error estimate holds
\begin{equation*}
\| u(t_n)-U_h^n(q^\dag)\|_{L^2(\Omega)} \le c(\tau t_n^{\alpha-1}  + h^2),\quad n=1,\ldots,N.
\end{equation*}
\end{lemma}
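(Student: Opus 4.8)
The plan is to compare the fully discrete solution with the $L^2(\Omega)$-projection of the exact solution, so as to separate the temporal (convolution quadrature) error from the spatial (finite element) error while keeping the time-independent projection $P_h$ commuting with the CQ operator. Set $e^n := U_h^n(q^\dag)-P_hu(t_n)$, so that $e^0=0$. Testing the scheme \eqref{eqn:fully-00} against $\chi\in X_h$, subtracting the weak form of the continuous equation $\Dal u(t_n)+A(t_n)u(t_n)=f(t_n)$, and using that $P_h$ is self-adjoint and commutes with $\bar\partial_\tau^\alpha$ (both being time-independent linear operations), I would arrive at the error identity
\begin{equation*}
(\bar\partial_\tau^\alpha e^n,\chi)+(q^\dag(t_n)\nabla e^n,\nabla\chi)=(\Dal u(t_n)-\bar\partial_\tau^\alpha u(t_n),\chi)+(q^\dag(t_n)\nabla(u(t_n)-P_hu(t_n)),\nabla\chi),\quad\forall\chi\in X_h.
\end{equation*}
The two forcing terms are the temporal consistency defect $d_1^n:=\Dal u(t_n)-\bar\partial_\tau^\alpha u(t_n)$ and a spatial consistency functional $d_2^n$. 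Since $\|u(t_n)-P_hu(t_n)\|_{L^2\II}\le ch^2\|u(t_n)\|_{H^2\II}\le ch^2$ by \eqref{eqn:proj-L2-error} and the uniform $H^2$ bound in \eqref{reg-fde-2}, it suffices to show $\|e^n\|_{L^2\II}\le c(\tau t_n^{\alpha-1}+h^2)$ and add back the projection error via the triangle inequality.

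For the spatial defect $d_2^n$, I would estimate its effect on $\|e^n\|_{L^2\II}$ by a duality (Aubin--Nitsche) argument: paired against $\nabla\chi$, a single application of Cauchy--Schwarz gives only $\|\nabla(u-P_hu)\|_{L^2\II}=O(h)$, so to recover the optimal $h^2$ one must exploit the smoothing of the discrete solution operator and the full elliptic regularity pickup, using the uniform bound $\|u(t)\|_{H^2\II}\le c$ from \eqref{reg-fde-2}, which upgrades the contribution of $d_2^n$ to $O(h^2)$ in the $L^2\II$ norm. The regularity $|\nabla q^\dag|\le M$ from Assumption \ref{ass:data-2} is what makes $q^\dag(t_n)\nabla(\cdot)$ a bounded perturbation compatible with this duality step.

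The temporal part is the main obstacle, for two reasons: the CQ defect $d_1^n$ is only weighted-integrable because of the mild solution singularity, and the discrete operator $A_h(q^\dag(t_n))$ in \eqref{eqn:fully-00} is genuinely time-dependent, whereas convolution quadrature is adapted to a stationary operator. I would treat both by freezing the coefficient at the target index $n$: rewrite the error equation with the frozen operator $A_h(q^\dag(t_n))$ and collect $(A_h(q^\dag(t_n))-A_h(q^\dag(t_j)))e^j$ as a perturbation sum, exactly paralleling the continuous representation \eqref{eqn:sol-op-rep}. For the frozen stationary operator, the classical nonsmooth-data CQ estimate applies, and the defect $d_1^n$ is controlled through the weighted derivative bound $\|\partial_t u(t)\|_{L^2\II}\le ct^{\alpha-1}$ from \eqref{reg-fde-2}, which is precisely the source of the singular factor $t_n^{\alpha-1}$; here the quantitative decay $|b_j^{(\alpha-1)}|\le c(j+1)^{-\alpha}$ of the CQ weights in \eqref{eqn:CQ-BE} is essential to carry the weight through the discrete convolution. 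The perturbation sum is absorbed using the discrete maximal $\ell^p$ regularity of the time-stepping scheme together with the perturbation estimate in Lemma \ref{lem:perturb} (and \eqref{eqn:perturb}), the factor $|t_n-t_j|$ it supplies being what allows a discrete Gronwall inequality to close the estimate without destroying the $t_n^{\alpha-1}$ weight.

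Combining the spatial bound $O(h^2)$, the temporal bound $O(\tau t_n^{\alpha-1})$ for $\|e^n\|_{L^2\II}$, and the projection error $O(h^2)$ yields the claimed estimate $\|u(t_n)-U_h^n(q^\dag)\|_{L^2\II}\le c(\tau t_n^{\alpha-1}+h^2)$. The delicate point to verify carefully is that the frozen-coefficient perturbation interacts benignly with the CQ weights, i.e.\ that the discrete maximal regularity estimate is uniform in $n$ and in the frozen time, so that the Gronwall step does not introduce a blow-up as $t_n\to0$; this is exactly the role played by the discrete fractional maximal $\ell^p$ regularity developed in this appendix.
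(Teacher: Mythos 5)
Your overall architecture---split off the $L^2(\Omega)$-projection, freeze the coefficient at the target time, absorb the perturbation $(A_h(q^\dag(t_n))-A_h(q^\dag(t_j)))$ via discrete maximal $\ell^p$ regularity, Lemma \ref{lem:perturb} and a discrete Gronwall inequality, and quote the classical nonsmooth-data CQ estimate for the frozen stationary operator---is sound and close in spirit to the paper's. The paper organizes things differently, though: it cites \cite{JinLiZhou:2019} for the temporal error $O(\tau t_n^{\alpha-1})$ and for the reduction to a purely spatial question, and then devotes the entire written proof to the \emph{semidiscrete} estimate $\|(u_h-u)(t)\|_{L^2(\Omega)}\le ch^2$, via the frozen-coefficient representation \eqref{eqn:sol-op-rep} and its discrete analogue followed by a continuous Gronwall argument.

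The genuine gap is in your treatment of the spatial defect. The raison d'\^etre of this lemma---stated in the first sentence of the paper's proof---is to remove the logarithmic factor $\ell_h=|\ln h|$ from the known estimate of \cite{JinLiZhou:2019}. The duality/smoothing argument you sketch (``exploit the smoothing of the discrete solution operator and the full elliptic regularity pickup, using the uniform bound $\|u(t)\|_{H^2(\Omega)}\le c$'') is precisely the argument that \emph{produces} the log factor: for the inhomogeneous term one is led to integrate $\min(h^2s^{-1},s^{\alpha-1})$ over $(0,t)$, which costs $h^2|\ln h|$ when $f$ is only bounded in time. To obtain the clean $O(h^2)$ claimed in the statement, the paper exploits $f\in C^1([0,T];L^2(\Omega))$ from Assumption \ref{ass:data-2}: using the identity $E_*(t)=-A_*^{-1}F_*'(t)$ it integrates by parts in time in $\int_0^t(P_hE_*(t-s)-E_{h*}(t-s)P_h)f(s)\,\d s$, landing on the operator $P_hF_*(t)A_*^{-1}-F_{h*}(t)A_{h*}^{-1}P_h$, which is bounded by $ch^2$ \emph{uniformly in $t$} thanks to the resolvent-level finite element estimates. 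Nothing in your proposal invokes $f'$ for the spatial part, so as written it yields only $\|u(t_n)-U_h^n(q^\dag)\|_{L^2(\Omega)}\le c(\tau t_n^{\alpha-1}+h^2\ell_h)$, which is weaker than the stated lemma and would propagate extra logarithmic factors into Lemma \ref{lem:err-1}, Lemma \ref{lem:err-2} and Theorem \ref{thm:error-q}. The other ingredients you list (the weight decay $|b_j^{(\alpha-1)}|\le c(j+1)^{-\alpha}$, the perturbation estimate, the Gronwall closure preserving the $t_n^{\alpha-1}$ weight) are the right ones for the temporal and perturbation parts.
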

\begin{proof}
The error estimate improves upon a known result from \cite{JinLiZhou:2019}, by
removing the log factor $\ell_h=|\ln h|$, under
Assumption \ref{ass:data-2}. It suffices to show that for $u_0 \equiv 0$ and
$f\in C^1([0,T]; L^2\II)$
\begin{equation}\label{eqn:semi-err}
\|  (u_h - u)(t)  \|_{L^2\II} \le c h^2, \quad  \forall t\in [0,T],
\end{equation}
where $u_h$ is the solution to the semidiscrete scheme:
\begin{equation}\label{eqn:semi}
\partial_t^\alpha u_h (t) + A_h(q^\dag(t)) u_h(t) = P_h f(t), \quad \forall t\in (0,T],\quad \text{with}~~u_h(0)= 0.
\end{equation}
For any $t_* \in (0,T]$,
let $A_{h*} = A_h(q^\dag(t_*))$ and $A_h(t)=A_h(q^\dag(t))$, and further we define the solution operators $F_{h*}$ and $E_{h*}(t)$ by
$$F_{h*}(t)=\frac{1}{2\pi {\rm i}}\int_{\Gamma_{\theta,\delta }}e^{zt} z^{\alpha-1} (z^\alpha+A_{h*} )^{-1}\, \d z\quad \mbox{and}\quad
E_{h*}(t):=\frac{1}{2\pi {\rm i}}\int_{\Gamma_{\theta,\delta}}e^{zt}  (z^\alpha+A_{h*} )^{-1}\, \d z.$$
Then the solution $u_h$ is given by
\begin{align*}
u_h(t) =   \int_0^t E_{h*}(t-s)\big(P_hf(s) + (A_{h*}-A_h(s))u_h(s)\big) \d s ,
\end{align*}
Let $e_h=P_hu-u_h$. Then by \eqref{eqn:sol-op-rep}, $e_h$ is given by
\begin{align}\label{FE-Err-expr2}
e_h(t)=&  \int_0^t (P_hE_*(t-s)-E_{h*}(t-s)P_h)f(s)  \d s\nonumber\\
&+ \int_0^t (P_hE_*(t-s)-E_{h*}(t-s)P_h)(A_*-A(s))u(s) \d s \nonumber \\
&+ \int_0^t E_{h*}(t-s)\big(P_h(A_*-A(s))u(s) -(A_{h*}-A_h(s))u_h(s)\big) \d s =: \sum_{i=1}^3{\rm I}_i(t).
\end{align}
The argument in \cite[Theorem 3.3]{JinLiZhou:2019} gives
\begin{align}
  \|{\rm I}_2(t_*)\|_{L^2(\Omega)} & \leq ch^2 \|f\|_{L^\infty(0,t_*;L^2(\Omega))},\label{23-1}\\
  \|{\rm I}_3(t_*)\|_{L^2(\Omega)}&\leq ch^2\|f\|_{L^\infty(0,t_*;L^2(\Omega))} + c\int_0^{t_*} \|e_h(s)\|_{L^2(\Omega)}\d s,\label{23-2}
\end{align}
Now we bound the term ${\rm I}_1$ in \eqref{FE-Err-expr2}. The identities $E_*(t) = -A_*^{-1}F_*'(t)$ and
$E_{h*}(t) = -A_{h*}^{-1}F_{h*}'(t)$ \cite[Lemma 6.1]{Jin:2021book} and integration by parts imply
\begin{equation*}
{\rm I}_1(t)= (P_h F_*(t) A_*^{-1}-F_{h*}(t) A_{h*}^{-1} P_h ) f(0)   - \int_0^t  (  P_h F_*(t-s) A_*^{-1}-F_{h*}(t-s) A_{h*}^{-1}P_h)f'(s) \,\d s.
\end{equation*}
For any $v\in L^2\II$, we derive
\begin{align*}
F_{h*}(t) A_{h*}^{-1} P_hv &= \frac{1}{2\pi {\rm i}}\int_{\Gamma_{\theta,\delta }}\!\!\!e^{zt} z^{\alpha-1} (z^\alpha+A_{h*} )^{-1}  A_{h*}^{-1} P_h v\, \d z
= \frac{1}{2\pi {\rm i}}\int_{\Gamma_{\theta,\delta }}\!\!\!e^{zt} z^{-1} \Big[ A_{h*}^{-1} - (z^\alpha+A_{h*} )^{-1} \Big] P_hv \, \d z.
\end{align*}
Similarly, $F_*(t) A_*^{-1}v$ can be represented as
\begin{equation*}
F_*(t) A_*^{-1}v = \frac{1}{2\pi {\rm i}}\int_{\Gamma_{\theta,\delta }}e^{zt} z^{-1} \Big[ A_{*}^{-1} - (z^\alpha+A_{*} )^{-1} \Big]v \, \d z.
\end{equation*}
Then the standard finite element approximation yields that  for any $z\in \Gamma_{\theta,\delta }$ \cite[p. 819--820]{FujitaSuzuki:1991}
$\| ( A_{h*}^{-1} P_h  - A_{*}^{-1})v\|_{L^2\II}\le c h^2 \| v\|_{L^2\II}$ and
$\| (( z^\alpha + A_{h*})^{-1} P_h  - (z^\alpha+A_{*})^{-1})v\|_{L^2\II}\le c h^2 \|v \|_{L^2\II}$.
Consequently, we arrive at
$$  \|(P_h F_*(t) A_*^{-1}-F_{h*}(t) A_{h*}^{-1} P_h) v \|_{L^2\II}\le c h^2 \|v\|_{L^2\II},\quad \forall v\in L^2\II.$$
Therefore, we obtain
$$\|{\rm I}_1(t)\|_{L^2\II}\le ch^2 \| f(0) \|_{L^2\II} + ch^2\int_0^t \|  f'(s) \|_{L^2(\Omega)}\,\d s,$$
which together with \eqref{23-1}--\eqref{23-2} implies
$$ \|  e_h(t_*) \|_{L^2\II} \le c h^2 + \int_0^{t_*} \|e_h(s)\|_{L^2(\Omega)}\d s.$$
The desired result follows from Gronwall's inequality and the triangle inequality.
\end{proof}

The next result gives the maximal $\ell^p$ regularity for the scheme \eqref{eqn:fully-00}, where $A_h$
denotes the discrete negative Dirichlet Laplacian.
\begin{lemma}\label{lem:disc-max-reg}
Let $\{U_h^n\}_{n=1}^N$ be the solution to the scheme \eqref{eqn:fully-00} with $u_0\equiv 0$. Then for any $p\in(1,\infty)$,
$$ \| (\bar\partial_\tau^\alpha U_h^n)_{n=1}^N  \|_{\ell^p(L^2\II)}
+  \| (A_h U_h^n)_{n=1}^N  \|_{\ell^p(L^2\II)} \le c \|  (f^n)_{n=1}^N  \| _{\ell^p(L^2\II)}.$$
\end{lemma}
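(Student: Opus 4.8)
The plan is to establish the estimate by the discrete operator-valued Fourier multiplier (transference) technique, exploiting that here $A_h$ is the \emph{time-independent} self-adjoint discrete Laplacian. First I would reduce to an infinite sequence: extend $f^n$ by zero for $n\le0$ and $n>N$ and solve the scheme on all of $\mathbb{Z}_+$. Since the backward Euler CQ weights $b_j^{(\alpha)}$ are supported on $j\ge0$, the scheme is causal, so the extended solution agrees with $U_h^n$ for $1\le n\le N$ and $\|(U_h^n)_{n=1}^N\|_{\ell^p(L^2(\Omega))}\le\|(U_h^n)_{n\ge1}\|_{\ell^p(L^2(\Omega))}$; hence it suffices to treat the infinite scheme. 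The crucial structural input is that $A_h$ is self-adjoint and positive definite on the finite-dimensional Hilbert space $(X_h,(\cdot,\cdot))$, with $\mathrm{spec}(A_h)\subset[\lambda_1,\infty)$ for a constant $\lambda_1>0$ independent of $h$ (Poincar\'e's inequality, since $X_h\subset H_0^1(\Omega)$). In particular $A_h$ is sectorial of angle $0$, uniformly in $h$.

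Next I would pass to the generating function (the $z$-transform). Writing $\widetilde v(\xi)=\sum_{n\ge1}v^n\xi^n$, the identity $(1-\xi)^\alpha=\sum_j b_j^{(\alpha)}\xi^j$ shows that $\bar\partial_\tau^\alpha$ has symbol $\mu(\xi)^\alpha$ with $\mu(\xi)=\tau^{-1}(1-\xi)$, so the scheme transforms to $\widetilde U(\xi)=(\mu(\xi)^\alpha+A_h)^{-1}\widetilde f(\xi)$, and the two quantities to be bounded carry the operator symbols
\[
 M_1(\xi)=\mu(\xi)^\alpha(\mu(\xi)^\alpha+A_h)^{-1},\qquad M_2(\xi)=A_h(\mu(\xi)^\alpha+A_h)^{-1}=I-M_1(\xi).
\]
The key geometric fact is that for $\xi=e^{i\theta}$ with $\theta\in(-\pi,\pi)\setminus\{0\}$, a direct computation gives $1-\xi=-2i\sin(\theta/2)e^{i\theta/2}$, whence $\arg(1-\xi)\in(-\pi/2,\pi/2)$ and therefore $\mu(\xi)^\alpha$ lies in the sector $\Sigma_{\alpha\pi/2}$ of half-angle $\alpha\pi/2<\pi/2$. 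Combined with the self-adjointness and positivity of $A_h$, the elementary spectral bounds $\tfrac{|w|}{|w+\lambda|}\le1$ and $\tfrac{\lambda}{|w+\lambda|}\le1$ for $w\in\Sigma_{\alpha\pi/2}$ and $\lambda\ge\lambda_1>0$ give $\|M_1(\xi)\|+\|M_2(\xi)\|\le c$ uniformly in $\xi\in\mathbb{T}\setminus\{1\}$ and in $\tau,h$.

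To conclude I would apply the discrete operator-valued Fourier multiplier theorem on $\ell^p(\mathbb{Z};L^2(\Omega))$ (Blunck's theorem), whose hypotheses on the UMD---here Hilbert---space $L^2(\Omega)$ reduce to a Marcinkiewicz condition, namely uniform boundedness of $\{M_j(\xi)\}$ and of $\{(1-\xi)\tfrac{d}{d\xi}M_j(\xi)\}$. The second family is controlled by the clean logarithmic-derivative identity $(1-\xi)\tfrac{d}{d\xi}\mu(\xi)^\alpha=-\alpha\,\mu(\xi)^\alpha$, which yields
\[
 (1-\xi)\tfrac{d}{d\xi}M_1(\xi)=-\alpha\big(M_1(\xi)-M_1(\xi)^2\big),\qquad(1-\xi)\tfrac{d}{d\xi}M_2(\xi)=\alpha\big(M_1(\xi)-M_1(\xi)^2\big),
\]
both uniformly bounded by the previous step. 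The multiplier theorem then gives the $\ell^p(\mathbb{Z};L^2(\Omega))$-boundedness of the operators associated with $M_1$ and $M_2$, i.e.\ the asserted estimate for the infinite scheme, and restriction to $\{1,\dots,N\}$ completes the proof.

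I expect the main obstacle to be the rigorous invocation of the discrete multiplier theorem with constants \emph{uniform in both $\tau$ and $h$}: one must ensure that the Marcinkiewicz (and, abstractly, $R$-boundedness) constants depend only on the fixed sectorial angle $\alpha\pi/2$ and on $\lambda_1$, and not on the mesh-dependent spectrum of $A_h$, while carefully handling the singularity of the symbol at $\xi=1$ where $\mu(\xi)\to0$. On the Hilbert space $L^2(\Omega)$ the $R$-boundedness requirements collapse to the norm bounds above, so this is the only genuinely delicate point; alternatively, since $A_h$ is $R$-sectorial uniformly in $h$ and backward Euler CQ is A-stable, the estimate also follows directly from the discrete maximal $\ell^p$ regularity established in \cite{Jin:2021book}.
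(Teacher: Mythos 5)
There is a genuine gap: your argument treats the elliptic operator in the scheme as a \emph{fixed} operator, but the scheme \eqref{eqn:fully-00} reads $\bar\partial_\tau^\alpha U_h^n + A_h(q^\dag(t_n))U_h^n = f^n$, where $A_h(q^\dag(t_n))$ is defined by $(A_h(q^\dag(t_n))v_h,\chi)=(q^\dag(t_n)\nabla v_h,\nabla\chi)$ and therefore changes with $n$ because the diffusion coefficient $q^\dag(x,t)$ is space--time dependent (this is the whole point of the paper). With a nonautonomous operator the scheme is not a discrete convolution in $n$, so the generating-function step fails at the outset: $\sum_{n}A_h(q^\dag(t_n))U_h^n\xi^n$ is not $A_h\widetilde U(\xi)$ for any single operator $A_h$, and there is no operator symbol $M_j(\xi)$ to which Blunck's multiplier theorem could be applied. (A secondary inaccuracy: even at a frozen time the operator is a variable-coefficient form, not the self-adjoint discrete Laplacian; the plain $A_h$ appears only on the left-hand side of the asserted estimate, and relating $\|A_hU_h^n\|$ to $\|A_h(q^\dag(t_n))U_h^n\|$ itself needs a discrete elliptic regularity/norm-equivalence step.)

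What you have essentially (re)proved is the autonomous-case discrete maximal $\ell^p$ regularity, which the paper takes as a known black box from \cite{JinLiZhou:2018nm}. The missing idea is how to pass from the frozen-coefficient case to the time-dependent one. The paper does this by freezing the coefficient at an arbitrary time level $t_m$, rewriting the scheme as
\begin{equation*}
\bar\partial_\tau^\alpha U_h^n + A_h(q^\dag(t_m))U_h^n = f^n + \bigl(A_h(q^\dag(t_m))-A_h(q^\dag(t_n))\bigr)U_h^n,
\end{equation*}
applying the autonomous maximal $\ell^p$ regularity on $\{1,\dots,m\}$, controlling the commutator through the Lipschitz-in-time perturbation bound $\|(A_h(t)-A_h(s))v_h\|\le c|t-s|\,\|A_hv_h\|$ (valid under condition \eqref{Cond-q}), and then absorbing the resulting term $c\,\tau\sum_{n=1}^m|t_m-t_n|^p\|A_hU_h^n\|_{L^2\II}^p$ by a summation-by-parts manipulation and the discrete Gronwall inequality applied to $g^m=\|(A_hU_h^n)_{n=1}^m\|_{\ell^p(L^2\II)}^p$. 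Your proposal contains none of this, so as written it does not prove the stated lemma; it would need to be supplemented by exactly this perturbation-plus-Gronwall layer.
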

\begin{proof}
For any $m=1,2,\ldots,N$, the scheme \eqref{eqn:fully-00} can be recast into
\begin{align*}
\bar\partial_\tau^\alpha U_h^n + A_h(q^\dag(t_m)) U_h^n
= P_hf^n+  (A_h(q^\dag(t_m))-A_h(q^\dag(t_n)) ) U_h^n. 
\end{align*}
Since $A_h(q^\dag(t_m))$ is independent of $n$, there holds the discrete maximal $\ell^p$ regularity \cite{JinLiZhou:2018nm}
\begin{align*}
\begin{aligned}
 &\quad \| (\bar\partial_\tau^\alpha U_h^n)_{n=1}^m  \|_{\ell^p(L^2\II)}^p
 + \| (A_h U_h^n)_{n=1}^m  \|_{\ell^p(L^2\II)} ^p \\
&\le c \big(\|  (f^n)_{n=1}^m  \| _{\ell^p(L^2\II)}^p 
+  \| [(A_h(q^\dag(t_m))-A_h(q^\dag(t_n)) ) U_h^n  ]_{n=1}^m \|_{L^2\II}^p\big). 
\end{aligned}
\end{align*}
Note that under condition \eqref{Cond-q}, there holds \cite[Remark 3.1]{JinLiZhou:2019}
\begin{equation*}
  \| (A_h(t) - A_h(s))v_h\| \le c\, |t-s|\, \|  A_hv_h \|,\quad \forall v_h \in X_h.
\end{equation*}
Consequently,
\begin{align*}
\begin{aligned}
&\quad \| (\bar\partial_\tau^\alpha U_h^n)_{n=1}^m  \|_{\ell^p(L^2\II)}^p
 + \| (A_h U_h^n)_{n=1}^m  \|_{\ell^p(L^2\II)} ^p\\
&\le c\|  (f^n)_{n=1}^m  \| _{\ell^p(L^2\II)}^p  
+ c \tau \sum_{n=1}^m |t_m - t_n|^p \| A_h U_h^n   \|_{L^2\II}^p.
\end{aligned}
\end{align*}
Let $g^m = \| (A_h U_h^n)_{n=1}^m  \|_{\ell^p(L^2\II)} ^p$. 
  Then the above estimate implies
\begin{align*}
  g^m &\le  c \|  (f^n)_{n=1}^m  \| _{\ell^p(L^2\II)}^p  
+ c \tau \sum_{n=1}^m |t_m - t_n|^p  \frac{g^n - g^{n-1}}{\tau} \\
&\le  c\|  (f^n)_{n=1}^m  \| _{\ell^p(L^2\II)}^p + c \tau \sum_{n=1}^{m-1} \frac{(t_m - t_n)^p-(t_m - t_{n+1})^p}{\tau}  g^n  \\
&\le c\|  (f^n)_{n=1}^m  \| _{\ell^p(L^2\II)}^p
+ c \tau  \sum_{n=1}^{m-1} t_{m-n}^{p-1} g^n  .
\end{align*}
Then the standard discrete Gronwall's inequality leads to
\begin{align*}
\begin{aligned}
 \| (A_h U_h^n)_{n=1}^m  \|_{\ell^p(L^2\II)} ^p
\le c \|  (f^n)_{n=1}^m  \| _{\ell^p(L^2\II)}^p .
\end{aligned}
\end{align*}
and the desired result follows immediately by the triangle's inequality.
\end{proof}

The next lemma provides an error estimate of the scheme \eqref{eqn:fully-00} with
the (perturbed) coefficient $\mathcal{I}_hq^\dag$. 
\begin{lemma}\label{lem:err-1}
Let $q^\dag$ be the exact diffusion coefficient, $u\equiv u(q^\dag)$ the solution to problem \eqref{eqn:var},
and  $\{U_h^n(\mathcal{I}_hq^\dag)\}\subset X_h$ the numerical solutions to the scheme
\eqref{eqn:fully-00} with $\mathcal{I}_hq^\dag$ in place of $q^\dag$. Then
under Assumptions \ref{ass:data-2} and \ref{ass:zdelta}, 
\begin{equation*}
\begin{split}
\|(u(t_n)-U_h^n(\mathcal{I}_h q^\dag))_{n=1}^N\|_{\ell^2(L^2(\Omega))}^2 &\le  \begin{cases}
 c (\tau^{\min(2,1+2\alpha)} +h^4),&\alpha\neq1/2;\\
 c (\tau^2\ell_N + h^4),&\alpha=1/2.
\end{cases}
\end{split}
\end{equation*}
\end{lemma}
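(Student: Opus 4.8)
The plan is to compare $U_h^n(\mathcal{I}_hq^\dag)$ with the exact solution $u$ by inserting the discrete solution $U_h^n(q^\dag)$ computed with the \emph{exact} coefficient, and to split
\[
u(t_n)-U_h^n(\mathcal{I}_hq^\dag)=\big(u(t_n)-U_h^n(q^\dag)\big)+\big(U_h^n(q^\dag)-U_h^n(\mathcal{I}_hq^\dag)\big).
\]
The first bracket is controlled directly by Lemma \ref{lem:err-parabolic}, which gives $\|u(t_n)-U_h^n(q^\dag)\|_{L^2(\Omega)}\le c(\tau t_n^{\alpha-1}+h^2)$. Squaring, multiplying by $\tau$, summing over $n$, and using $2ab\le a^2+b^2$ produces $\tau\sum_{n=1}^N(\tau t_n^{\alpha-1}+h^2)^2\le c\tau^{2\alpha+1}\sum_{n=1}^N n^{2\alpha-2}+cTh^4$. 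The sum $\sum_{n=1}^N n^{2\alpha-2}$ is uniformly bounded when $\alpha<\tfrac12$, behaves like $\ell_N=\ln(N+1)$ when $\alpha=\tfrac12$, and is $O(N^{2\alpha-1})$ when $\alpha>\tfrac12$; inserting $N=T/\tau$ yields exactly the dichotomy $\tau^{\min(2,1+2\alpha)}$ versus $\tau^2\ell_N$, together with the $h^4$ contribution. Thus the first bracket already reproduces the entire asserted right-hand side, and the whole task reduces to showing that the coefficient-perturbation term is no worse than $O(h^2)$ in $\ell^2(L^2(\Omega))$.

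For the second bracket, set $w_h^n=U_h^n(q^\dag)-U_h^n(\mathcal{I}_hq^\dag)$. Subtracting the two instances of \eqref{eqn:fully-00} and using the definition \eqref{eqn:Ah} of $A_h$ gives $w_h^0=0$ and
\[
\bar\partial_\tau^\alpha w_h^n+A_h(q^\dag(t_n))w_h^n=-\big(A_h(q^\dag(t_n))-A_h(\mathcal{I}_hq^\dag(t_n))\big)U_h^n(\mathcal{I}_hq^\dag)=:g_h^n,
\]
whose weak form is $(g_h^n,\chi)=-((q^\dag(t_n)-\mathcal{I}_hq^\dag(t_n))\nabla U_h^n(\mathcal{I}_hq^\dag),\nabla\chi)$ for $\chi\in X_h$. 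Here lies the main obstacle: since $\|q^\dag(t)-\mathcal{I}_hq^\dag(t)\|_{L^\infty(\Omega)}=O(h)$ only, a direct energy or maximal $\ell^p$ regularity estimate (Lemma \ref{lem:disc-max-reg}) applied to $g_h^n$ would lose a power of $h$ and deliver merely $O(h)$. The optimal order $O(h^2)$ must instead be recovered by a duality (Aubin--Nitsche) argument that exploits the elementwise cancellation of the interpolation error. Because $\mathcal{I}_h$ reproduces affine functions, a Bramble--Hilbert estimate gives $\big|\int_K(q^\dag(t)-\mathcal{I}_hq^\dag(t))\,\mathrm{d}x\big|\le ch^{2+d/2}|q^\dag(t)|_{H^2(K)}$ on each simplex $K$, while the gradient products arising in the dual pairing are elementwise constant; summing with the Cauchy--Schwarz inequality over the elements converts this cancellation into a global factor $h^2|q^\dag(t)|_{H^2(\Omega)}$, which is finite \emph{precisely} by Assumption \ref{ass:zdelta}. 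The accompanying gradient factors are bounded through the $W^{1,\infty}$ regularity $\|\nabla u\|_{L^\infty}\le c$ from \eqref{reg-fde-inf} and discrete $H^1$-stability. This is exactly the role of the extra hypothesis $q^\dag\in C([0,T];H^2(\Omega))$.

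Concretely, I would realize the duality at the level of the frozen-coefficient solution operators used in the proof of Lemma \ref{lem:err-parabolic}: the only ingredient that changes when $q^\dag$ is replaced by $\mathcal{I}_hq^\dag$ in the stiffness operator is the pair of elliptic resolvent approximation estimates $\|(A_h(\mathcal{I}_hq^\dag(t_*))^{-1}P_h-A(q^\dag(t_*))^{-1})v\|_{L^2(\Omega)}\le ch^2\|v\|_{L^2(\Omega)}$ and $\|((z^\alpha+A_h(\mathcal{I}_hq^\dag(t_*)))^{-1}P_h-(z^\alpha+A(q^\dag(t_*)))^{-1})v\|_{L^2(\Omega)}\le ch^2\|v\|_{L^2(\Omega)}$, uniformly for $z\in\Gamma_{\theta,\delta}$. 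Each estimate splits into the standard finite element resolvent error (which is $O(h^2)$ by the bounds already cited from \cite{FujitaSuzuki:1991}) plus the coefficient-interpolation perturbation, and the latter is handled by the duality/Bramble--Hilbert mechanism above, the shift $z^\alpha$ entering only through sectoriality. With these two estimates in hand, the argument of Lemma \ref{lem:err-parabolic} goes through verbatim and yields $\|u(t_n)-U_h^n(\mathcal{I}_hq^\dag)\|_{L^2(\Omega)}\le c(\tau t_n^{\alpha-1}+h^2)$; the time summation of the first paragraph then delivers the claimed $\ell^2(L^2(\Omega))$ bound. The hard part will be to make the $O(h^2)$ coefficient-perturbation bound \emph{uniform} in $z$ along the sectorial contour, since only then can it be inserted into the contour integrals defining $F_{h*}$ and $E_{h*}$.
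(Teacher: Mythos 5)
Your overall skeleton is the same as the paper's: split through the auxiliary solution $U_h^n(q^\dag)$ computed with the exact coefficient, invoke Lemma \ref{lem:err-parabolic} for $u(t_n)-U_h^n(q^\dag)$, and your time-summation of $(\tau t_n^{\alpha-1}+h^2)^2$ producing the $\tau^{\min(2,1+2\alpha)}$ versus $\tau^2\ell_N$ dichotomy is correct. You have also correctly diagnosed the crux: the naive bound $\|q^\dag-\mathcal{I}_hq^\dag\|_{L^\infty(\Omega)}=O(h)$ loses an order, and the $O(h^2)$ must come from pairing $\|q^\dag-\mathcal{I}_hq^\dag\|_{L^2(\Omega)}\le ch^2\|q^\dag\|_{H^2(\Omega)}$ (Assumption \ref{ass:zdelta}) with a $W^{1,\infty}$-type bound on one of the discrete gradients via duality.

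The gap is in how you propose to transfer that stationary cancellation to the evolution problem. You plan to rerun the contour-integral proof of Lemma \ref{lem:err-parabolic} with $A_h(\mathcal{I}_hq^\dag(t_*))$ in place of $A_h(q^\dag(t_*))$, which requires the perturbation estimate $\|((z^\alpha+A_h(\mathcal{I}_hq^\dag(t_*)))^{-1}P_h-(z^\alpha+A(q^\dag(t_*)))^{-1})v\|_{L^2(\Omega)}\le ch^2\|v\|_{L^2(\Omega)}$ \emph{uniformly} along $\Gamma_{\theta,\delta}$ — and you yourself flag this uniformity as "the hard part" without supplying it. It is genuinely the whole difficulty: running your duality/Bramble--Hilbert mechanism on the resolvent difference $(z^\alpha+A_h(\mathcal{I}_hq))^{-1}(A_h(q)-A_h(\mathcal{I}_hq))(z^\alpha+A_h(q))^{-1}$ requires a discrete $W^{1,\infty}$ (or $W^{1,p}$, $p>d$) bound on the \emph{shifted} resolvent $(z^\alpha+A_h)^{-1}P_h$ with the correct $z$-scaling, and no such estimate is available in the paper or its cited references; for large $|z|$ the gradient bounds degrade and the argument does not close as stated. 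The paper avoids this entirely: it writes the error equation \eqref{eqn:err-eq-1} for $\rho_h^n=U_h^n(q^\dag)-U_h^n(\mathcal{I}_hq^\dag)$, applies the discrete maximal $\ell^p$ regularity of Lemma \ref{lem:disc-max-reg} in the form $\|(\rho_h^n)_{n=1}^N\|_{\ell^2(L^2(\Omega))}\le c\|(A_h(q^\dag(t_n))^{-1}g^n)_{n=1}^N\|_{\ell^2(L^2(\Omega))}$, rewrites $A_h(q^\dag(t_n))^{-1}g^n=(A_h(\mathcal{I}_hq^\dag(t_n))^{-1}-A_h(q^\dag(t_n))^{-1})A_h(\mathcal{I}_hq^\dag(t_n))U_h^n(\mathcal{I}_hq^\dag)$, and then invokes the purely \emph{stationary} estimate $\|A_h(\mathcal{I}_hq^\dag)^{-1}-A_h(q^\dag)^{-1}\|_{L^p(\Omega)\to L^2(\Omega)}\le ch^2$ (\cite[Lemma A.1]{JinZhou:2021sinum}, which is exactly where your duality argument lives), followed by a second application of maximal $\ell^p$ regularity to bound $\|(A_h(\mathcal{I}_hq^\dag(t_n))U_h^n(\mathcal{I}_hq^\dag))_{n=1}^N\|_{\ell^2(L^p(\Omega))}$. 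In other words: maximal $\ell^p$ regularity is not the step that loses a power of $h$ — combined with the stationary $O(h^2)$ operator estimate it is precisely what lets one bypass any $z$-uniform resolvent perturbation bound. To complete your proof you should replace your third paragraph by this maximal-regularity argument (or else actually establish the $z$-uniform estimate, which would be a substantial new technical contribution rather than a routine verification).
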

\begin{proof}
Note that $U_h^n(q^\dag)$ and  $U_h^n(\mathcal{I}_h q^\dag)$ satisfy $U_h^0(q^\dag) = U_h^0(\mathcal{I}_hq^\dag) = P_h u_0$ and
\begin{align*}
    \bar \partial_\tau^\alpha U_h^n(q^\dag) +   A_h(q^\dag(t_n)) U_h^n(q^\dag) &=  P_h f(t_n),\quad n=1,2\ldots,N,\\
 \bar \partial_\tau^\alpha  U_h^n(\mathcal{I}_h  q^\dag) + A_h(\mathcal{I}_h q^\dag(t_n))   U_h^n(\mathcal{I}_h q^\dag) &=  P_h f(t_n),\quad n=1,2,\ldots, N.
\end{align*}
By subtracting the two identities, we deduce that
$\rho_h^n:=U_h^n(q^\dag)  - U_h^n(\mathcal{I}_h q^\dag)$ satisfies $\rho_h^0=0$ and
\begin{equation}\label{eqn:err-eq-1}
   \bar \partial_\tau^\alpha \rho_h^n + A_h(q^\dag(t_n))\rho_h^n = \big( A_h(\mathcal{I}_h q^\dag(t_n))  - A_h(q^\dag(t_n))\big) U_h^n(\mathcal{I}_h  q^\dag), \quad n=1,\ldots,N.
\end{equation}
The the maximal $\ell^p$ regularity in Lemma \ref{lem:disc-max-reg} implies
\begin{align*}
 \|(\rho_h^n)_{n=1}^N\|_{\ell^2(L^2(\Omega))}^2
&\le c    \|(A_h(q^\dag(t_n))^{-1}\big(A_h(\mathcal{I}_h q^\dag(t_n))  - A_h(q^\dag(t_n))\big) U_h^n(\mathcal{I}_h  q^\dag))_{n=1}^N\|_{\ell^2(L^2\II)}^2\\
&\le  c  \| (\big(A_h(\mathcal{I}_h q^\dag(t_n))^{-1}  - A_h(q^\dag(t_n))^{-1}\big) A_h(\mathcal{I}_h q^\dag(t_n)) U_h^n(\mathcal{I}_h  q^\dag))_{n=1}^N\|_{\ell^2(L^2\II)}^2.
\end{align*}
By \cite[Lemma A.1]{JinZhou:2021sinum}, we have for any $\epsilon>0$ and $p\ge \max(d+\epsilon,2)$,
\begin{equation*}
   \| A_h(\mathcal{I}_h q^\dag)^{-1} - A_h(q^\dag)^{-1}  \|_{L^p(\Omega)\rightarrow L^2(\Omega)} \le c h^2.
\end{equation*}
Consequently,
\begin{equation*}
 \|(\rho_h^n)_{n=1}^N\|_{\ell^2(L^2(\Omega))}^2
\le  c h^4 \|  (A_h(\mathcal{I}_h q^\dag(t_n)) U_h^n(\mathcal{I}_h  q^\dag))_{n=1}^N\|_{\ell^2(L^p(\Omega))}^2.
\end{equation*}
Then the maximal $\ell^p$ regularity for the backward Euler CQ in Lemma \ref{lem:disc-max-reg} implies
\begin{equation*}
    \|  (A_h(\mathcal{I}_h q^\dag(t_n)) U_h^n(\mathcal{I}_h  q^\dag))_{n=1}^N\|_{\ell^2(L^p\II)}^2\le
  c (\|( f(t_n))_{n=1}^N\|_{\ell^2(L^p(\Omega))}^2  +  \| \nabla u_0 \|_{L^p(\Omega)} ^2).
\end{equation*}
Finally, the desired estimate follows from Lemma \ref{lem:err-parabolic} and the  triangle inequality.
\end{proof}

Last, we give an estimate on the backward Euler CQ approximation of $\partial_t^\alpha u(t_n)$.
\begin{lemma}\label{lem:deriv-approx}
Let $q^\dag$ be the exact diffusion coefficient and $u\equiv u(q^\dag)$ be the solution to problem \eqref{eqn:var}.
Then under Assumption \ref{ass:data-2}, with $\ell_n = \ln(1+\frac{t_n}{\tau})=\ln(n+1)$,
 there holds
\begin{equation*}
  \| \bar \partial_\tau^\alpha u(t_n) - \partial_t^\alpha u(t_n)   \|_{L^2(\Omega)}
           \le c\tau ( t_n^{-1} + \ell_n).
\end{equation*}
\end{lemma}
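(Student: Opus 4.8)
The plan is to reduce the claim to a consistency estimate for the backward Euler convolution quadrature applied to $w:=u-u_0$, which satisfies $w(0)=0$ and $\partial_t^\alpha u=\partial_t^\alpha w$; accordingly $\bar\partial_\tau^\alpha u(t_n)-\partial_t^\alpha u(t_n)=\bar\partial_\tau^\alpha w(t_n)-\partial_t^\alpha w(t_n)$, with the quadrature now acting on a sequence that vanishes at $n=0$. Two ingredients must be assembled first: the temporal regularity of $w$ and a workable representation of the error. From \eqref{reg-fde-2} I already have $\|w'(t)\|_{L^2\II}=\|u'(t)\|_{L^2\II}\le c\,t^{\alpha-1}$; I would complement this with the second-order bound $\|u''(t)\|_{L^2\II}\le c\,t^{\alpha-2}$, obtained by differentiating the solution representation \eqref{eqn:sol-op-rep} once more and invoking the smoothing properties of $F_*'$ and $E_*$ together with the perturbation estimate \eqref{eqn:perturb}. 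This refinement is not contained in \eqref{reg-fde-2} and has to be argued separately.

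Next I would recast the consistency error as a convolution, following the device used in \cite{JinLiZhou:2019}. Using the associativity $\bar\partial_\tau^\alpha=\bar\partial_\tau^{\alpha-1}\bar\partial_\tau$ together with the identities $\sum_{j=0}^m b_j^{(\alpha)}=b_m^{(\alpha-1)}$ and $|b_j^{(\alpha-1)}|\le c(j+1)^{-\alpha}$, the discrete operator $\bar\partial_\tau^\alpha w(t_n)$ rewrites as a discrete convolution of the weights $\tau^{1-\alpha}b^{(\alpha-1)}$ against the backward differences $\bar\partial_\tau w$, which is directly comparable with the continuous form $\partial_t^\alpha w(t_n)=\frac{1}{\Gamma(1-\alpha)}\int_0^{t_n}(t_n-s)^{-\alpha}w'(s)\,\d s$. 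Subtracting the two yields a splitting into the quadrature error of replacing $w'$ on each subinterval by an endpoint/averaged value, and the error between the discrete kernel $\tau^{-\alpha}b^{(\alpha)}$ and the continuous kernel $(t_n-\cdot)^{-\alpha}/\Gamma(1-\alpha)$.

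To extract the stated bound I would split the time sum at $t_1=\tau$. On the initial layer $[0,\tau]$ the singular behaviour $\|w'(s)\|_{L^2\II}\le c\,s^{\alpha-1}$ drives an essentially $O(1)$ local contribution, which is what forces the $\tau\,t_n^{-1}$ term (note $\tau\,t_1^{-1}=1$ at the first step). On the bulk region $[\tau,t_n]$ I would use $\|w''(s)\|_{L^2\II}\le c\,s^{\alpha-2}$ to obtain a genuine first-order local truncation error and then sum the weighted contributions of type $\tau^2\sum_j (t_n-t_j)^{-\alpha}t_j^{\alpha-2}$. The crucial subtlety is that the leading convolution-quadrature error term, formally $\tfrac{\alpha}{2}\tau\,{}_0 D_t^{1+\alpha}w(t_n)$, nearly annihilates the dominant $t^\alpha$-singularity of $w$, since ${}_0 D_t^{1+\alpha}t^\alpha=0$ formally; the surviving contribution then comes from a borderline, logarithmically divergent sum $\sim\tau\sum_j (n-j+1)^{-1}$, which is exactly the origin of the factor $\ell_n=\ln(n+1)$. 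Collecting the initial-layer and bulk pieces gives $c\,\tau(t_n^{-1}+\ell_n)$.

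The main obstacle is precisely the bulk estimate: establishing the second-order temporal regularity away from $t=0$, and, more delicately, correctly accounting for the near-cancellation of the $t^\alpha$-singularity so as to recover a logarithmic—rather than negative-power—accumulation, while verifying that the initial-layer and bulk contributions combine without loss. The algebraic identities for the weights $b_j^{(\alpha)}$ and the weighted regularity bounds from Section \ref{sec:cont} are the tools that make this tracking of borderline sums tractable.
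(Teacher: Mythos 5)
Your picture of where the two error contributions come from ($\tau t_n^{-1}$ from the initial layer driven by the $t^\alpha$ mode, $\tau\ell_n$ from a borderline logarithmic accumulation) is the right heuristic, but the route you propose has two genuine gaps, and the paper's proof is built precisely to avoid both. First, the second-order bound $\|u''(t)\|_{L^2(\Omega)}\le c\,t^{\alpha-2}$ on which your bulk estimate rests is not available under Assumption \ref{ass:data-2}: the source is only assumed to lie in $C^1([0,T];L^2(\Omega))$, and differentiating the representation \eqref{eqn:sol-op-rep} a second time produces either a term requiring $f''$ or a convolution $\int_0^t E_*'(t-s)f'(s)\,\d s$ whose operator kernel, of size $t^{\alpha-2}$, is not integrable at $s=t$; the paper's regularity catalogue \eqref{reg-fde-2} stops at first derivatives for exactly this reason. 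Second, even granting that bound, the bulk sum you write down, $\tau^2\sum_j(t_n-t_j)^{-\alpha}t_j^{\alpha-2}=\sum_j(n-j)^{-\alpha}j^{\alpha-2}\sim c\,(\tau/t_n)^{\alpha}$, is only $O(\tau^{\alpha})$ at fixed $t_n$, strictly short of the claimed first-order rate. You correctly identify that a cancellation involving the $t^\alpha$ mode must rescue this, but the formal identity ${}_0D_t^{1+\alpha}t^{\alpha}=0$ cannot be invoked cell by cell once the local truncation errors have been taken in absolute value and summed; the proposal supplies no mechanism for realizing the cancellation, and that mechanism is the missing idea.

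The paper's mechanism is an operational-calculus representation rather than a kernel comparison: freeze the coefficient at $t=0$, set $A_0=A(0)$, $F(t)=(A_0-A(t))u(t)+f(t)$ and $y=u-u_0$, and write the consistency error $\partial_t^\alpha y(t_n)-\bar\partial_\tau^\alpha y(t_n)$ as a contour integral of $K(z)=(z^\alpha-\delta_\tau(e^{-z\tau})^{\alpha})(z^\alpha+A_0)^{-1}$ applied to $z^{-1}(F(0)-A_0u_0)+z^{-1}\widehat{F'}(z)$. The estimate $|\delta_\tau(e^{-z\tau})^{\alpha}-z^{\alpha}|\le c\tau|z|^{1+\alpha}$ together with the resolvent bound encodes the cancellation automatically: the piece carrying $z^{-1}(F(0)-A_0u_0)$ yields $c\tau t_n^{-1}$, and the piece carrying $\widehat{F'}$ yields $c\tau\int_\tau^{t_n}(t_n-s+\tau)^{-1}\|F'(s)\|_{L^2(\Omega)}\,\d s\le c\tau\ell_n$, which requires only $\|F'(s)\|_{L^2(\Omega)}\le c$ --- a first-derivative bound obtained from \eqref{reg-fde-2} and \eqref{eqn:perturb}, with the factor $s$ in $s\|u'(s)\|_{H^2(\Omega)}\le c$ absorbing the temporal singularity. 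To close your argument you would in effect have to rebuild this machinery (or an asymptotic decomposition of $u$ into $t^\alpha$ plus a genuinely $C^2$ remainder, which again exceeds the available data regularity); the elementary kernel-comparison route does not close as stated.
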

\begin{proof}
The proof employs a (different) perturbation argument. Let $A_0=A(0)$.
Let $F(t)=(A_0-A(t))u(t)+f(t)$ and $y(t)=u(t)-u_0$. Then $y(t)$ satisfies
\begin{equation*}
  \partial_t^\alpha y(t) + A_0y(t) = F(t) - A_0u_0,  \quad \forall t\in(0,T],\quad \text{with}~~ y(0)=0.
\end{equation*}
Using the identity $F(t) = F(0)+ \int_0^t F'(s)\,\d s$, then Laplace transform gives
$$z^\alpha \widehat y(z) + A_0\widehat y(z)   =   z^{-1}(F(0) - A_0u_0) + z^{-1} \widehat{F'}(z),$$
i.e.,
$$\widehat y(z) = (z^\alpha+A_0)^{-1} ( z^{-1}(F(0) - A_0u_0) + z^{-1} \widehat{F'}(z)).$$
Similarly, one can derive a representation for the discrete approximation.
By inverse Laplace transform, $w^n=\partial_t^\alpha y(t_n)-\bar \partial_\tau^\alpha y(t_n)$ is given by
\begin{align*}
 w^n  & =  \frac{1}{2\pi\mathrm{i}} \int_{\Gamma_{\theta,\delta}^\tau} e^{zt_n} K(z) (z^{-1}(F(0) - A_0u_0) + z^{-1} \widehat{F'}(z))\,\d z \\
 &\quad + \frac{1}{2\pi\mathrm{i}} \int_{\Gamma_{\theta,\delta}\setminus\Gamma_{\theta,\delta}^\tau} e^{zt_n} K(z)(z^{-1}(F(0) - A_0u_0) + z^{-1} \widehat{F'}(z))\,\d z.
\end{align*}
with $\Gamma_{\theta,\delta}^\tau=\{z \in \Gamma_{\theta,\delta}, |\rm{Im}(z)|\le \frac\pi\tau \}$ and
$$K(z)= (z^\alpha-\delta_\tau(e^{-z\tau})^{\alpha}) (z^\alpha+A_0)^{-1},$$
{with $\delta_\tau(\xi)=\tau^{-1}(1-\xi)$ being characteristic polynomial
of the backward Euler method.} Simple computation shows that the following estimates hold
\begin{align}
  \quad c_1 |z| &\le |\delta_\tau(e^{-z\tau})| \le c_2|z|,\qquad  | \delta_\tau(e^{-z\tau})^{\alpha}-z^\alpha| \le c \tau z^{1+\alpha}, \quad \forall z\in \Gamma_{\theta,\delta}^\tau,\label{eqn:gen}\\
  |\delta_\tau(e^{-z\tau})| &\le |z| \sum_{k=1}^\infty \frac{|z\tau|^{k-1}}{k!} \leq |z|e^{|z|\tau}, \quad \forall z\in \Sigma_\theta=\{z\in\mathbb{C}: z\neq0, |\arg(z)|\leq \theta\},\label{eqn:est-kernel}
\end{align}
and the resolvent estimate
\begin{equation}\label{eqn:resol}
  \|(z+A_0)^{-1}\|\leq c|z|^{-1},\quad \forall z\in \Sigma_\theta.
\end{equation}
We first treat the error involving $(A_0u_0-F(0))$, and let
\begin{equation*}
  {\rm I}_1=\frac{1}{2\pi\mathrm{i}} \int_{\Gamma_{\theta,\delta}^\tau} e^{zt_n} K(z) z^{-1}(A_0u_0-F(0))\d z
  \quad\mbox{and}\quad
  {\rm I}_2=\frac{1}{2\pi\mathrm{i}} \int_{\Gamma_{\theta,\delta}\setminus\Gamma_{\theta,\delta}^\tau} e^{zt_n} K(z)z^{-1}(F(0)-A_0u_0)\d z.
\end{equation*}
By choosing $\delta=c/t_n$ in $\Gamma_{\theta,\delta}$ and applying \eqref{eqn:resol},
the term ${\rm I}_1$ is bounded by
\begin{align*}
\| {\rm I}_1\|_{L^2(\Omega)}
   &\le c \tau \| F(0) - A_0 u_0 \|_{L^2(\Omega)} \Big(\int_{\frac{c}{t_n}}^{\frac{\pi\sin\theta}{\tau}} e^{-c\rho t_n} \,\d\rho
   + \int_{-\theta}^\theta ct_n^{-1} \,\d\theta \Big)
  \le c\tau t_n^{-1}  \| F(0) - A_0 u_0\|_{L^2(\Omega)}.
\end{align*}
Further, by \eqref{eqn:est-kernel}, for any $z=\rho e^{\pm\mathrm{i}\theta}\in \Gamma_{\theta,\delta}\setminus\Gamma_{\theta,\delta}^\tau$ and
choosing $\theta\in(\pi/2,\pi)$ close to $\pi$,
\begin{align*}
  |e^{zt_n} (\delta_\tau(e^{-z\tau})^{\alpha}-z^\alpha)z^{ -1}| & \leq e^{t_n\rho\cos \theta}(c|z|^\alpha e^{\alpha\rho\tau}+|z|^\alpha)|z|^{-1}\leq c|z|^{\alpha-1}e^{-c\rho t_n}.
\end{align*}
Then the term ${\rm I_2}$ is bounded by
\begin{equation*}
\|{\rm I}_2\|_{L^2(\Omega)}
   \le  c \| F(0)-A_0u_0\|_{L^2(\Omega)}  \int_{\frac{\pi\sin\theta}{\tau}}^\infty e^{-c\rho t_n} \rho^{-1}\,\d\rho
    \le c\tau t_n^{-1} \| F(0)-A_0u_0\|_{L^2(\Omega)}.
\end{equation*}
This argument also bounds for the term involving $\widehat{F'}(z)$. Finally, we obtain
 \begin{equation*}
\begin{split}
\|w^n\|_{L^2(\Omega)}
   &\le c\tau t_n^{-1} \| F(0)-A_0u_0\|_{L^2(\Omega)}+ \int_\tau^{t_{n}} (t_n-s+\tau)^{-1} \|  F'(s)  \|_{L^2\II}\,\d s.
\end{split}
\end{equation*}
Then the solution regularity \eqref{reg-fde-2} and the perturbation estimate \eqref{eqn:perturb} immediately imply
\begin{align*}
   \|  F'(s)  \|_{L^2\II} &\leq \|  f'(s)  \|_{L^2\II} + \| A'(s) u(s)  \|_{L^2\II} + \|  (A_0 - A(s)) u'(s) \|_{L^2\II}\\
      &\leq  c(\|  f'(s)  \|_{L^2\II} + \| u(s)  \|_{H^2\II} + s\| u'(s) \|_{H^2\II})\leq c.
\end{align*}
This bound and the estimate $\| f(0)-A_0u_0\|_{L^2(\Omega)}\leq c$ imply
\begin{align*}
\|w^n\|_{L^2(\Omega)}
  \le c\tau t_n^{-1}  + c\int_\tau^{t_{n}} (t_{n+1}-s )^{-1}  \,\d s \le c\tau (t_n^{-1} + \ell_n).
\end{align*}
This completes the proof of the lemma.
\end{proof}

\label{appB}

\bibliographystyle{abbrv}
\bibliography{frac_inv}

\begin{thebibliography}{10}

\bibitem{AdamsGelhar:1992}
E.~E. Adams and L.~W. Gelhar.
\newblock Field study of dispersion in a heterogeneous aquifer: 2. spatial
  moments analysis.
\newblock {\em Water Res. Research}, 28(12):3293--3307, 1992.

\bibitem{AdamsFournier:2003}
R.~A. Adams and J.~J.~F. Fournier.
\newblock {\em Sobolev {S}paces}.
\newblock Elsevier/Academic Press, Amsterdam, second edition, 2003.

\bibitem{AkrivisLiLubich:2017}
G.~Akrivis, B.~Li, and C.~Lubich.
\newblock Combining maximal regularity and energy estimates for time
  discretizations of quasilinear parabolic equations.
\newblock {\em Math. Comp.}, 86(306):1527--1552, 2017.

\bibitem{Alifanov:1995}
O.~M. Alifanov, E.~A. Artyukhin, and S.~V. Rumyantsev.
\newblock {\em Extreme {M}ethods for {S}olving {I}ll-{P}osed {P}roblems with
  {A}pplications to {I}nverse {H}eat {T}ransfer {P}roblems}.
\newblock Begell House, New York, 1995.

\bibitem{Amann:2000}
H.~Amann.
\newblock Compact embeddings of vector-valued {S}obolev and {B}esov spaces.
\newblock {\em Glas. Mat. Ser. III}, 35(55)(1):161--177, 2000.

\bibitem{BanksKunisch:1989}
H.~T. Banks and K.~Kunisch.
\newblock {\em Estimation {T}echniques for {D}istributed {P}arameter
  {S}ystems}.
\newblock Birkh\"{a}user, Boston, MA, 1989.

\bibitem{Bonito:2017}
A.~Bonito, A.~Cohen, R.~DeVore, G.~Petrova, and G.~Welper.
\newblock Diffusion coefficients estimation for elliptic partial differential
  equations.
\newblock {\em SIAM J. Math. Anal.}, 49(2):1570--1592, 2017.

\bibitem{BrezisMironescu:2018}
H.~Brezis and P.~Mironescu.
\newblock Gagliardo-{N}irenberg inequalities and non-inequalities: the full
  story.
\newblock {\em Ann. Inst. H. Poincar\'{e} Anal. Non Lin\'{e}aire},
  35(5):1355--1376, 2018.

\bibitem{Chavent:2009}
G.~Chavent.
\newblock {\em Nonlinear {L}east {S}quares for {I}nverse {P}roblems}.
\newblock Springer, New York, 2009.

\bibitem{ChengYamamoto:2009}
J.~Cheng, J.~Nakagawa, M.~Yamamoto, and T.~Yamazaki.
\newblock Uniqueness in an inverse problem for a one-dimensional fractional
  diffusion equation.
\newblock {\em Inverse Problems}, 25(11):115002, 16, 2009.

\bibitem{EnglHankeNeubauer:1996}
H.~W. Engl, M.~Hanke, and A.~Neubauer.
\newblock {\em Regularization of {I}nverse {P}roblems}.
\newblock Kluwer Academic, Dordrecht, 1996.

\bibitem{ern-guermond}
A.~Ern and J.-L. Guermond.
\newblock {\em Theory and {P}ractice of {F}inite {E}lements}.
\newblock Springer-Verlag, New York, 2004.

\bibitem{EvansGariepy:2015}
L.~C. Evans and R.~F. Gariepy.
\newblock {\em Measure {T}heory and {F}ine {P}roperties of {F}unctions}.
\newblock CRC Press, Boca Raton, FL, 2015.

\bibitem{FaLenzi:2005}
K.~S. Fa and E.~K. Lenzi.
\newblock Time-fractional diffusion equation with time dependent diffusion
  coefficient.
\newblock {\em Phys. Rev. E}, 72:011107, 2005.

\bibitem{FujitaSuzuki:1991}
H.~Fujita and T.~Suzuki.
\newblock Evolution problems.
\newblock In {\em Handbook of {N}umerical {A}nalysis, {V}ol. {II}}, Handb.
  Numer. Anal., II, pages 789--928. North-Holland, Amsterdam, 1991.

\bibitem{GarraOrsingher:2015}
R.~Garra, E.~Orsingher, and F.~Polito.
\newblock Fractional diffusions with time-varying coefficients.
\newblock {\em J. Math. Phys.}, 56(9):093301, 17, 2015.

\bibitem{GilbargTrudinger:1983}
D.~Gilbarg and N.~S. Trudinger.
\newblock {\em {Elliptic Partial Differential Equations of Second Order}}.
\newblock Springer-Verlag, Berlin, third edition, 2001.

\bibitem{GruterWidman:1982}
M.~Gr\"{u}ter and K.-O. Widman.
\newblock The {G}reen function for uniformly elliptic equations.
\newblock {\em Manuscripta Math.}, 37(3):303--342, 1982.

\bibitem{HatanoHatano:1998}
Y.~Hatano and N.~Hatano.
\newblock Dispersive transport of ions in column experiments: An explanation of
  long-tailed profiles.
\newblock {\em Water Res. Research}, 34(5):1027--1033, 1998.

\bibitem{Hytonen:2016}
T.~Hyt\"{o}nen, J.~van Neerven, M.~Veraar, and L.~Weis.
\newblock {\em Analysis in {B}anach {S}paces. {V}ol. {I}. {M}artingales and
  {L}ittlewood-{P}aley {T}heory}.
\newblock Springer, Cham, 2016.

\bibitem{Isakov:2006}
V.~Isakov.
\newblock {\em Inverse {P}roblems for {P}artial {D}ifferential {E}quations}.
\newblock Springer, New York, second edition, 2006.

\bibitem{ItoJin:2015}
K.~Ito and B.~Jin.
\newblock {\em Inverse {P}roblems: {T}ikhonov {T}heory and {A}lgorithms}.
\newblock World Scientific Publishing Co. Pte. Ltd., Hackensack, NJ, 2015.

\bibitem{Jin:2021book}
B.~Jin.
\newblock {\em {Fractional Differential Equations}}.
\newblock Springer, Switzerland, 2021.

\bibitem{JKZ:2021}
B.~Jin, Y.~Kian, and Z.~Zhou.
\newblock Reconstruction of a space-time-dependent source in subdiffusion
  models via a perturbation approach.
\newblock {\em SIAM J. Math. Anal.}, 53(4):4445--4473, 2021.

\bibitem{JinLiZhou:2018nm}
B.~Jin, B.~Li, and Z.~Zhou.
\newblock Discrete maximal regularity of time-stepping schemes for fractional
  evolution equations.
\newblock {\em Numer. Math.}, 138(1):101--131, 2018.

\bibitem{JinLiZhou:2019}
B.~Jin, B.~Li, and Z.~Zhou.
\newblock Subdiffusion with a time-dependent coefficient: analysis and
  numerical solution.
\newblock {\em Math. Comp.}, 88(319):2157--2186, 2019.

\bibitem{JinRundell:2015}
B.~Jin and W.~Rundell.
\newblock A tutorial on inverse problems for anomalous diffusion processes.
\newblock {\em Inverse Problems}, 31(3):035003, 40, 2015.

\bibitem{JinZhou:2021sinum}
B.~Jin and Z.~Zhou.
\newblock Error analysis of finite element approximations of diffusion
  coefficient identification for elliptic and parabolic problems.
\newblock {\em SIAM J. Numer. Anal.}, 59(1):119--142, 2021.

\bibitem{JinZhou:2021sicon}
B.~Jin and Z.~Zhou.
\newblock Numerical estimation of a diffusion coefficient in subdiffusion.
\newblock {\em SIAM J. Control Optim.}, 59(2):1466--1496, 2021.

\bibitem{KaltenbacherRundell:2019}
B.~Kaltenbacher and W.~Rundell.
\newblock On an inverse potential problem for a fractional reaction-diffusion
  equation.
\newblock {\em Inverse Problems}, 35(6):065004, 31, 2019.

\bibitem{KR:2019}
B.~Kaltenbacher and W.~Rundell.
\newblock On the identification of a nonlinear term in a reaction-diffusion
  equation.
\newblock {\em Inverse Problems}, 35(11):115007, 38, 2019.

\bibitem{KianOksanenYamamoto:2018}
Y.~Kian, L.~Oksanen, E.~Soccorsi, and M.~Yamamoto.
\newblock Global uniqueness in an inverse problem for time fractional diffusion
  equations.
\newblock {\em J. Diff. Equations}, 264(2):1146--1170, 2018.

\bibitem{KilbasSrivastavaTrujillo:2006}
A.~A. Kilbas, H.~M. Srivastava, and J.~J. Trujillo.
\newblock {\em Theory and {A}pplications of {F}ractional {D}ifferential
  {E}quations}.
\newblock Elsevier Science B.V., Amsterdam, 2006.

\bibitem{Krasnoschok:2016}
M.~V. Krasnoschok.
\newblock Solvability in {H}\"{o}lder space of an initial boundary value
  problem for the time-fractional diffusion equation.
\newblock {\em Zh. Mat. Fiz. Anal. Geom.}, 12(1):48--77, 2016.

\bibitem{KubicaYamamoto:2020}
A.~Kubica, K.~Ryszewska, and M.~Yamamoto.
\newblock {\em Time-{F}ractional {D}ifferential {E}quations---a {T}heoretical
  {I}ntroduction}.
\newblock Springer, Singapore, 2020.

\bibitem{LiGuJia:2012}
G.~Li, W.~Gu, and X.~Jia.
\newblock Numerical inversions for space-dependent diffusion coefficient in the
  time fractional diffusion equation.
\newblock {\em J. Inverse Ill-Posed Probl.}, 20(3):339--366, 2012.

\bibitem{LiYamamoto:2013}
G.~Li, D.~Zhang, X.~Jia, and M.~Yamamoto.
\newblock Simultaneous inversion for the space-dependent diffusion coefficient
  and the fractional order in the time-fractional diffusion equation.
\newblock {\em Inverse Problems}, 29(6):065014, 36, 2013.

\bibitem{LiYamamoto:2019review}
Z.~Li and M.~Yamamoto.
\newblock Inverse problems of determining coefficients of the fractional
  partial differential equations.
\newblock In {\em Handbook of fractional calculus with applications. {V}ol. 2},
  pages 443--464. De Gruyter, Berlin, 2019.

\bibitem{LopushanskyiLopushanska:2014}
A.~O. Lopushanskyi and H.~P. Lopushanska.
\newblock One inverse problem for the diffusion-wave equation in bounded
  domain.
\newblock {\em Ukrainian Math. J.}, 66(5):743--757, 2014.
\newblock Translation of Ukra\"{\i}n. Mat. Zh. {{\bf{6}}6} (2014), no. 5,
  666--678.

\bibitem{Lubich:1986}
C.~Lubich.
\newblock Discretized fractional calculus.
\newblock {\em SIAM J. Math. Anal.}, 17(3):704--719, 1986.

\bibitem{LuchkoYamamoto:2017}
Y.~Luchko and M.~Yamamoto.
\newblock On the maximum principle for a time-fractional diffusion equation.
\newblock {\em Fract. Calc. Appl. Anal.}, 20(5):1131--1145, 2017.

\bibitem{MetzlerJeon:2014}
R.~Metzler, J.~H. Jeon, A.~G. Cherstvy, and E.~Barkai.
\newblock Anomalous diffusion models and their properties: non-stationarity,
  non-ergodicity, and ageing at the centenary of single particle tracking.
\newblock {\em Phys. Chem. Chem. Phys.}, 16(44):24128--24164, 2014.

\bibitem{MetzlerKlafter:2000}
R.~Metzler and J.~Klafter.
\newblock The random walk's guide to anomalous diffusion: a fractional dynamics
  approach.
\newblock {\em Phys. Rep.}, 339(1):1--77, 2000.

\bibitem{Nigmatulli:1986}
R.~R. Nigmatullin.
\newblock The realization of the generalized transfer equation in a medium with
  fractal geometry.
\newblock {\em Phys. Stat. Solid. B}, 133(1):425--430, 1986.

\bibitem{SeidmanVogel:1989}
T.~I. Seidman and C.~R. Vogel.
\newblock Well-posedness and convergence of some regularisation methods for
  nonlinear ill posed problems.
\newblock {\em Inverse Problems}, 5(2):227--238, 1989.

\bibitem{Thomee:2006}
V.~Thom\'{e}e.
\newblock {\em Galerkin {F}inite {E}lement {M}ethods for {P}arabolic
  {P}roblems}.
\newblock Springer-Verlag, Berlin, second edition, 2006.

\bibitem{WangZou:2010}
L.~Wang and J.~Zou.
\newblock Error estimates of finite element methods for parameter
  identifications in elliptic and parabolic systems.
\newblock {\em Discrete Contin. Dyn. Syst. Ser. B}, 14(4):1641--1670, 2010.

\bibitem{WeiLi:2018}
T.~Wei and Y.~S. Li.
\newblock Identifying a diffusion coefficient in a time-fractional diffusion
  equation.
\newblock {\em Math. Comput. Simul.}, 151:77--95, 2018.

\bibitem{Zhang:2016}
Z.~Zhang.
\newblock An undetermined coefficient problem for a fractional diffusion
  equation.
\newblock {\em Inverse Problems}, 32(1):015011, 21, 2016.

\bibitem{ZhangZhou:2017}
Z.~Zhang and Z.~Zhou.
\newblock Recovering the potential term in a fractional diffusion equation.
\newblock {\em IMA J. Appl. Math.}, 82(3):579--600, 2017.

\end{thebibliography}

\end{document}